\journal{Computer Methods in Applied Mechanics and Engineering}
\definecolor{lightblue}{rgb}{0.22,0.45,0.70}
\definecolor{cgray}{rgb}{0.7,0.7,0.7}
\numberwithin{equation}{section}
\numberwithin{table}{section}
\numberwithin{figure}{section}
\newcommand\bd{\boldsymbol{d}}
\newcommand\bu{\boldsymbol{u}}
\newcommand\bv{\boldsymbol{v}}
\newcommand\bw{\boldsymbol{w}}
\newcommand\bx{\boldsymbol{x}}
\newcommand\bm{\boldsymbol{m}}
\newcommand\nn{\boldsymbol{n}}
\renewcommand\gg{\boldsymbol{g}}
\newcommand\ff{\boldsymbol{f}}
\newcommand\bt{\boldsymbol{t}}
\newcommand\bsigma{\boldsymbol{\sigma}}
\newcommand\beps{\boldsymbol{\epsilon}}
\newcommand\bI{\mathbf{I}}
\newcommand\bH{\mathbf{H}}
\newcommand\bL{\mathbf{L}}
\newcommand\bV{\mathbf{V}}
\newcommand\bW{\mathbf{W}}
\newcommand\rQ{\mathrm{Q}}
\newcommand\rH{\mathrm{H}}
\newcommand\rZ{\mathrm{Z}}
\newcommand\rV{\mathrm{V}}
\newcommand\cT{\mathcal{T}}
\newcommand\cA{\mathcal{A}}
\newcommand\cB{\mathcal{B}}
\newcommand\cF{\mathcal{F}}
\newcommand\cG{\mathcal{G}}
\newcommand\cL{\mathcal{L}}
\newcommand\cH{\mathcal{H}}
\def\esssup{\mathrm{ess\,sup}}
\newtheorem{remark}{Remark}[section]
\newtheorem{lemma}{Lemma}[section]
\newtheorem{theorem}{Theorem}[section]
\newcommand\bdiv{\mathop{\mathbf{div}}\nolimits}
\newcommand\vdiv{\mathop{\mathrm{div}}\nolimits}
\newcommand\vdiva{\mathop{\mathrm{div}_{\mathrm{a}}}\nolimits}
\newcommand\curl{\mathop{\mathrm{curl}}\nolimits}
\newcommand\cero{\boldsymbol{0}}
\newcommand{\dz}{\ensuremath{\, \mathrm{d}z}}
\newcommand{\dr}{\ensuremath{\, \mathrm{d}r}}
\newcommand{\dt}{\ensuremath{\, \mathrm{d}t}}
\newcommand{\uu}{\underline\bu}
\newcommand\cM{\mathcal{M}}
\newcommand\cN{\mathcal{N}}
\newcommand\tfinal{T}
\newcommand{\cred}{}
\newenvironment{proof}{\noindent{\it Proof.}}{\hfill$\square$}
\begin{document}
	\hypersetup{
		linkcolor=lightblue,
		urlcolor=lightblue,
		citecolor=lightblue
		}
	
\begin{frontmatter}	

\title{\textbf{The Biot-Stokes coupling using total pressure: formulation, analysis and application to interfacial flow in the eye}}
\author[monash,sechenov,unach]{Ricardo Ruiz-Baier\corref{cor1}}
		\ead{ricardo.ruizbaier@monash.edu}				
\address[monash]{School of Mathematics, Monash University, 9 Rainforest Walk, Clayton, Victoria 3800, Australia.}		
\address[sechenov]{Institute of Computer Science and Mathematical Modelling, Sechenov University, Moscow, Russian Federation.}
\address[unach]{Universidad Adventista de Chile, Casilla 7-D, Chill\'an, Chile.}	
\cortext[cor1]{Author for correspondence. Email: {\tt ricardo.ruizbaier@monash.edu}. Phone: +61 3 9905 4485.}

\author[bristol]{Matteo Taffetani}
\ead{ho20302@bristol.ac.uk}
\address[bristol]{Department of Engineering Mathematics, University of Bristol, BS8 1TR Bristol, United Kingdom.}

\author[nc]{Hans D. Westermeyer}
\ead{hdwester@ncsu.edu}
\address[nc]{Department of Clinical Sciences, College of Veterinary Medicine, North Carolina State University, 1060 William Moore Drive, Raleigh, NC 27607, USA.}

 \author[pit]{Ivan Yotov}
 \ead{yotov@math.pitt.edu}
 \address[pit]{Department of Mathematics, University of Pittsburgh, Pittsburgh, PA 15260, USA.}

\date{\today}
\begin{abstract}
We consider a multiphysics model for the flow of Newtonian fluid coupled with Biot consolidation equations through an interface, and incorporating total pressure as an unknown in the poroelastic region. A new mixed-primal finite element scheme is proposed solving for the pairs fluid velocity - pressure and displacement - total poroelastic pressure using Stokes-stable elements, and where the formulation does not require Lagrange multipliers to set up the usual transmission conditions on the interface. The stability and well-posedness of the continuous and semi-discrete problems are analysed in detail. Our numerical study is framed \cred{in the context of applicative problems pertaining to heterogeneous  geophysical flows and to eye poromechanics. For the latter, we investigate} different interfacial flow regimes in Cartesian and axisymmetric coordinates that could eventually help describe early morphologic changes associated with glaucoma development in canine species. 
\end{abstract}

\begin{keyword}
Porous media flow \sep Biot consolidation \sep total pressure \sep transmission problem \sep mixed finite element methods \sep eye fluid poromechanics.\\[1ex]

\MSC 65M60 \sep 65M12 \sep 76S05 \sep 74F10 \sep 92C35.
\end{keyword}
\end{frontmatter}


\section{Introduction}
Poroelastic structures are found in many applications of industrial and scientific relevance. Examples include the interaction between soft permeable tissue and blood flow, or the study of {the spatial growth of biofilm in fluids}. When the interaction with a free fluid is considered, {the mechanics of the fluid and poroelastic domains} are coupled through balance of forces and continuity conditions that adopt diverse forms depending on the expected behaviour in the specific application (see, e.g., \cite{dalwadi16,showalter05,taffetani20,tully09} and the references therein). {The particular problem we consider in this paper as motivation for the design of the finite element formulation is the} interfacial flow of aqueous humour between the anterior chamber and the trabecular meshwork (which is a deformable porous structure) in the eye, and how such phenomenon relates to early stages of glaucoma. 

Glaucoma encompasses a group of mechanisms that lead to decreased retinal function, impaired visual fields and blindness. The main risk factor for glaucoma in canines is an abnormal increase in {the} intra-ocular pressure (which under physiologically normal conditions is balanced between aqueous humour production and outflow to the venous drainage system \cite{gum07}). We are interested in modelling the  flow behaviour of aqueous humour within the anterior chamber and its interaction with {the} poroelastic properties of particular compartments in the drainage outlet located between the base of the iris and the limbus, which, in the dog eye and most other non-primate species consists of an array of thin tissue columns (pectinate ligaments) \cite{pearl05} which mark the boundary of the trabecular meshwork with the anterior chamber. Sketches of the regions of interest are depicted in Figure~\ref{fig:second-sketch}. Our focus is on how the physical changes associated with pectinate ligament dysplasia, a change seen in all dogs with primary angle closure glaucoma, affect aqueous humour flow through this boundary. \cred{We stress that the ciliary cleft anatomy of all carnivorous mammals is fairly similar to that of the dog.  In fact, this anatomy is preserved across much of herbivorous mammals as well (see, e.g., \cite{meekins21}). Therefore, even though the dog is probably the most studied due to its status as a companion animal, this work likely applies to most carnivorous and herbivorous mammals.} 

The flow within the anterior chamber will be modelled by Navier-Stokes and Stokes' law for Newtonian fluids, whereas the filtration of aqueous humour through the {deformable} trabecular meshwork and towards the angular aqueous plexus will be described by Darcy's law. Pressure differences are generated by production (from the ciliary muscle) and drainage (to angular aqueous plexus and then linked to the veins at the surface of the sclera through collecting channels) of aqueous humour. 

Other effects that could contribute to modification of the flow patterns and that we do not consider here, are thermal properties (buoyancy mechanisms due to temperature gradients from inner to outer cornea) \cite{bedford86}, cross-link interaction between fibrils in the cornea \cite{gizzi21}, pressure changes due to phacodonesis (vibration of the lens while the head or eye itself moves) and Rapid Eye Movement during sleep \cite{fitt06}, and nonlinear flow conditions in the filtration region (incorporated in \cite{kumar06} through Darcy-Forchheimer models).  

In contrast with \cite{crowder13,ferreira14,martinez19,villamarin12}, here we consider that the coalescing of the pectinate ligaments   results in marked changes in porosity properties of the anterior chamber - trabecular meshwork interface, which  could eventually lead to progressive collapse of the ciliary cleft. We further postulate that these modifications of the tissue's microstructure could be induced by forces exerted by the flow that concentrate at the interface {between} the dysplastic pectinate ligament and the anterior chamber, and which occur over a timescale much larger than that of the ocular pulsating flow. In fact, evidence of the compliance of the trabecular meshwork can be found in, e.g., \cite{johnstone04}. One of the earliest modelling works including a coupling between aqueous humour in the anterior chamber with complying structures {is presented} in \cite{heys01}, where mechanical properties of the bovine iris were employed to set an elastic interface to represent blinking. Other fluid-structure interaction models have been recently developed in \cite{zhang18}, suggesting that flow conditions in the trabecular meshwork and the outlets could be largely affected by the changes of permeability in microstructure{,} and \cite{aletti16}, where poroelastic properties of the choroid and viscoelastic response of the vitreous body are used to set up a more complete 3D model of larger scale that discards a dedicated physiological description of the trabecular meshwork and considers instead a windkessel model.

\begin{figure}[t]
\begin{center}
{\includegraphics[height=0.2\textwidth]{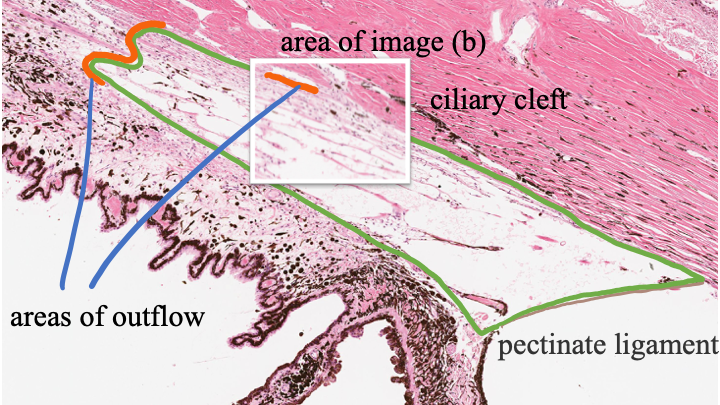}}\ 
{\includegraphics[height=0.2\textwidth]{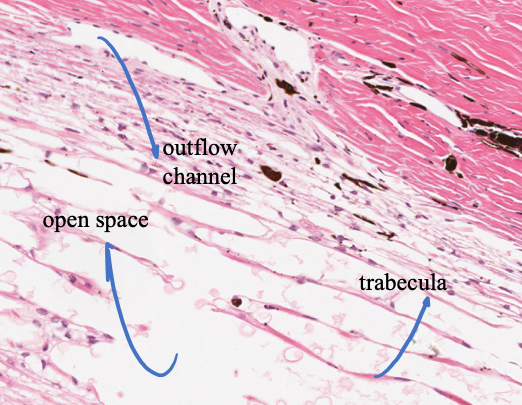}}\  
{\includegraphics[height=0.2\textwidth]{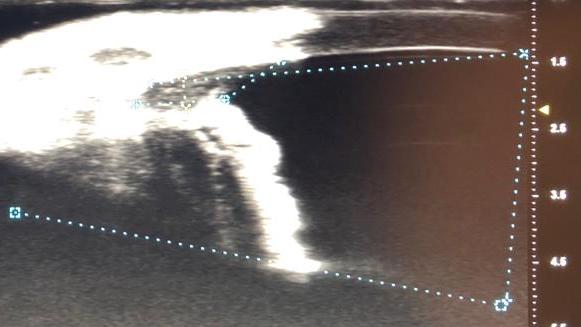}}
\end{center}

\vspace{-3mm}
\caption{Histology sketches of regions of interest (left, centre), and ultrasound image taken from data (right).}\label{fig:second-sketch}
\end{figure}

In the general context of {single phase fluid} / poromechanical coupling, there are already a variety of finite element formulations starting from the work \cite{murad01}, which focuses on the effects of secondary consolidation. 
\cred{More recently, partitioned finite element formulations using domain decomposition and or Nitsche's approach for single and double poroelastic layers in contact with {a single phase fluid}  can be found in \cite{badia09,bukac15,bukac15b,kunwar20}.
Monolithic couplings have been analysed in \cite{ambar18} for a mixed Darcy formulation using a Lagrange multiplier to impose flux continuity (see also \cite{ambar20} for the extension to the case of non-Newtonian fluids), as well in \cite{cesme17,cesme20} for a primal Darcy formulation. Ghost penalty methods have been employed for cut FEM methods valid in the regime of large deformations in \cite{ager19}.} 

Here, and drawing inspiration from the formulation in \cite{ambar18} and \cite{deoliveira20,lee17,oyarzua16}, \cred{we rewrite the poroelasticity equations using three fields (displacement, fluid pressure and total pressure). Compared to previous works \cite{ambar18,ambar20,badia09,bukac15,bukac15b,cesme17,cesme20,kunwar20}, which employ the classical displacement formulation, an advantage of the present approach, inherited from \cite{oyarzua16}, is that the formulation is free of poroelastic locking, meaning that it is robust with respect to the Lam\'e parameters of the poroelastic structure. This is of particular importance when we test variations of the flow response to changes in the material properties of the skeleton and when the solid approaches the incompressibility limit. 
The present work also stands as an extension of the formulation recently employed in \cite{taffetani20} (where only the case of intrinsic incompressible constituent in the poroelastic region were considered) to obtain approximate solutions for heterogeneous poroelasticity coupled with Stokes flow in channels (and using also heterogeneous elastic moduli); while the PDE analysis, numerical aspects, and applicability of the formalism to more realistic scenarios have not yet been addressed. In this work, under adequate assumptions, the analysis of the weak formulation is carried out, using a (time continuous) semi-discrete Galerkin approximation and a weak compactness argument. The well-posedness of the semi-discrete formulation is established using the theory of differential algebraic equations (see, e.g., \cite{brenan95}) and using similar results to those obtained in \cite{ambar18,yi14}. A conforming mixed finite element scheme of general order  is used. Furthermore, a fully discrete scheme based on backward Euler's time discretisation is considered, and the unique solvability  and convergence for the fully discrete scheme are established.}

We have organised the contents of this paper in the following manner. Section~\ref{sec:model} outlines the model problem, motivating each {term in} the balance equations and stating {the}  interfacial and boundary conditions. Section~\ref{sec:weak} states the \cred{weak} form of the governing equations in Cartesian and axisymmetric coordinates. Then{, in Section~\ref{sec:wellp},} we address the construction of the finite element scheme, the well-posedness of the continuous and discrete problems, the stability of the fully discrete system in matrix form. Section~\ref{sec:error} states the fully-discrete scheme {and presents the error estimates}. In Section~\ref{sec:results} we collect   computational results consisting in verification of spatio-temporal convergence and analysis of different cases on simplified and more physiologically  accurate geometries, \cred{including also a typical application in reservoir modelling. One of the examples involves large displacements near the interface, in which case a harmonic extension operator is used to deform the fluid domain}. We close with a summary, some remarks and a discussion on model generalisations in Section~\ref{sec:concl}.

\section{Governing equations}\label{sec:model}
Let us consider a spatial domain $\Omega\subset \mathbb{R}^d$, $d=2,3$ disjointly split into $\Omega_F$ and $\Omega_P$ representing, respectively, the regions where a chamber filled with incompressible fluid and the deformable porous structure are located. We will denote by $\nn$ the unit normal vector on the boundary $\partial\Omega$, and by $\Sigma=\Omega_F\cap \Omega_P$ the interface between the two subdomains. We also define the boundaries $\Gamma_F = \partial\Omega_F\setminus\Sigma$ and {$\Gamma_P = \partial\Omega_P \setminus \Sigma$}, and adopt the convention that on $\Sigma$ the normal vector points from $\Omega_F$ to $\Omega_P$. See a rough sketch in Figure~\ref{fig:sketch}, {that represents} the geometry of the anterior segment in the eye distinguishing between the anterior chamber $\Omega_F$ and the trabecular meshwork $\Omega_P$. The domain is sketched as an axisymmetric region, for which more specific properties will be listed later on.

In presenting the set of governing equations for the coupled fluid - poroelastic system we first focus on the fluid domain, then on the poroelastic domain and, finally, on the initial, boundary and interfacial conditions.

\begin{figure}[t!]
	\begin{center}
		\includegraphics[height=0.33\textwidth]{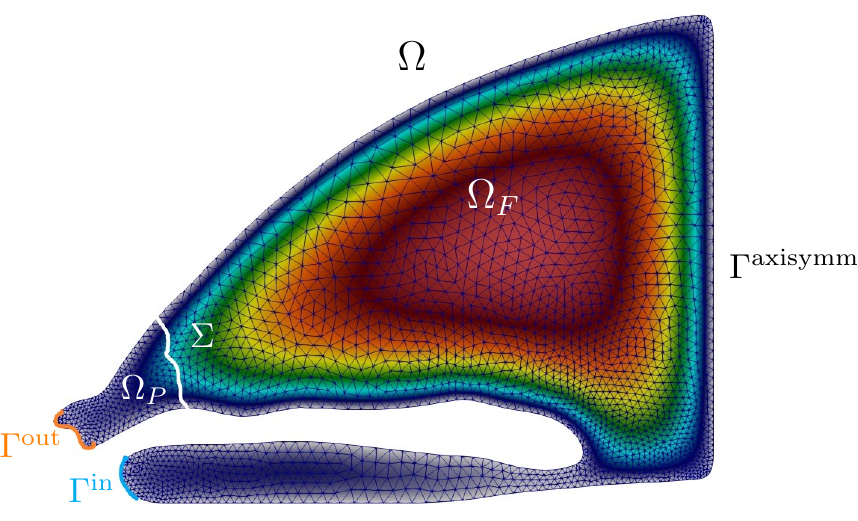}
	\end{center}
	\caption{Schematic diagram of multidomain configuration on a segmented and meshed geometry, including the location of boundaries and interface. The inlet region $\Gamma^{\mathrm{in}}$ and the symmetry axis $\Gamma^{\mathrm{axisymm}}$ are part of the boundary $\Gamma^{\bu}_F$, whereas the outlet region $\Gamma^{\mathrm{out}}$ is part of the boundary $\Gamma^{\bd}_P$.}\label{fig:sketch}
\end{figure}
\subsection{Fluid domain}
In the fluid domain $\Omega_F$, the problem is governed by the momentum and mass conservation equations. Defining the fluid velocity $\bu$ and the fluid pressure $p_F$, the resulting system is written as 
\begin{subequations}
	\begin{align}
	\label{eq:momentumA}
	\rho_f(\partial_t \bu + \bu\cdot\nabla\bu) -\bdiv[2\mu_f \beps(\bu) - p_F\bI] & = \rho_f \gg & \text{in $\Omega_F\times(0,\tfinal]$},\\ 
	\label{eq:massA}
	\vdiv \bu & = 0 & \text{in $\Omega_F\times(0,\tfinal]$},
	\end{align}
\end{subequations}
where $\rho_f,\mu_f$ are  the density and dynamic viscosity of the fluid (e.g. the aqueous humour if we think to the application for the filtration in the eye and thus $\Omega_F$ is the anterior chamber), 
$\gg$ is the gravity acceleration, $\beps(\bu) = \frac{1}{2}(\nabla\bu+\nabla\bu^{\tt t})$ 
is the strain rate tensor and $\partial_t$ indicates derivatives with respect to time.
\cred{Note that, in typical flow conditions of the eye anterior chamber, the Reynolds number is low (approximately 1.25, while the reduced Reynolds number is between  0.06-0.08. See, e.g., \cite{cannizzo17,fitt06,johnstone04,martinez19}). Therefore, 
 for the stability analysis and for some of the numerical tests later on,} we will restrict the fluid model to Stokes' equations. 

\subsection{Poroelastic domain}
The poroelastic domain $\Omega_P$ is a biphasic material constituted by a linear elastic solid phase (potentially intrinsically compressible) and an intrinsically incompressible fluid phase. In the context of the eye poromechanics, the trabecular meshwork region occupying $\Omega_P$ is constituted by three distinctive tissues, the uveal meshwork, the juxtacanalicular meshwork, and the corneoscleral network; {they have different micromechanical properties that, from our modelling perspective, can be regarded as a single poroelastic domain with heterogeneous porosity distribution that, in turn, means possible heterogeneity in the material properties.}  In addition, we anticipate that, although the fluid viscosity is relevant at the scale of the pore, we assume that the fluid can be treated as inviscid at the macroscale. Calling $p_P$ the fluid pressure and $\bd$ the solid displacement, here we introduce the formulation that assumes pressure and displacement as primary variables as presented, for example, in \cite{cowin2007} 
\begin{subequations}\label{eq:massmomentumPD}
	\begin{align}\label{eq:massPD}
	\partial_t  \left(C_0 p_P + \alpha \vdiv \bd\right) - \vdiv \biggl({\frac{\kappa}{\mu_f} (\nabla p_P - \rho_f \gg)}\biggr) & = 0  & \text{in $\Omega_P\times(0,\tfinal]$},\\
	\label{eq:momentumPD}
	-\bdiv[2\mu_s\beps(\bd) + \lambda \left(\vdiv\bd\right) \bI - \alpha p_P \bI ]& = \rho_m\ff & \text{in $\Omega_P\times(0,\tfinal]$}.
	\end{align}
\end{subequations}
The first equation can be derived from the conservation of mass for the fluid phase once employing the Darcy's law  and \cred{the relation} between fluid content - pressure - hydrostatic deformation of the solid phase. The storage capacity $C_0$ is related to the intrinsic compressibility of the solid phase, while $\kappa$ is the permeability (assumed isotropic but heterogeneous).
The second equation {is the conservation of the momentum for the mixture}, where $\ff$ is a (possibly fluid pressure-dependent) body load, $\beps(\bd) = \frac{1}{2}(\nabla\bd+\nabla\bd^{\tt t})$ is the 
infinitesimal strain tensor, $\rho_s$ is the density of the 
porous matrix and $\rho_m$ is the average density of the poroelastic body, $\lambda,\mu_s$ are the Lam\'e constants of the solid; the term in the divergence in the left-hand side of \eqref{eq:momentumPD} is \cred{known} as the effective stress or Terzaghi stress and the parameter $\alpha$, also known as Biot-Willis poroelastic coefficient, depends on the intrinsic compressibility of the solid phase ($\alpha =1$ when the solid phase is intrinsically incompressible). We refer an interested reader to \cite{cowin2007} or \cite{coussy2004} for further details.

For sake of robustness of the formulation with respect to $\lambda$, we introduce the \emph{total pressure} $\varphi: = \alpha p_P - \lambda\vdiv\bd$, as an additional unknown in the system (following \cite{lee17,oyarzua16}), and rewrite the pressure-displacement formulation in \eqref{eq:massmomentumPD} in term of the solid displacement $\bd$, the fluid 
pressure $p_P$, and the total pressure $\varphi$,  as 
\begin{subequations}
	\begin{align}
	\label{eq:momentumM}
	-\bdiv[2\mu_s\beps(\bd) -\varphi \bI ]& = \rho_m\ff & \text{in $\Omega_P\times(0,\tfinal]$},\\
	\label{eq:phi}
	\varphi - \alpha p_P + \lambda \vdiv\bd & = 0 & \text{in $\Omega_P\times(0,\tfinal]$},\\
	\label{eq:massM}
	\bigl(C_0 + \frac{\alpha^2}{\lambda}\bigr)\partial_t p_P - \frac{\alpha}{\lambda} \partial_t \varphi  - \vdiv  \biggl({\frac{\kappa}{\mu_f} (\nabla p_P - \rho_f \gg)}\biggr)
	& = 0 & \text{in $\Omega_P\times(0,\tfinal]$}.
	\end{align}\end{subequations}
In contrast with the formulations in \cite{bukac15,ambar18}, here we do not employ the fluid {velocity} in the porous domain as a separate unknown. Consistently with \cite{macminn2016}, in the framework of linear poroelasticity, the permeability depends on the pressure and the displacement fields only at higher orders; however, it can be heterogeneous, with a spatial distribution dependent, for example, on an initial porosity distribution. We thus simply write 
\begin{equation}\label{eq:generic-kappa}
\kappa = \kappa(\bx). 
\end{equation}
{Likewise, it is also possible to assume heterogeneity of the Lam\'e constants, as in \cite{taffetani20}.} In such a case, we need to assume that there exist constants $\lambda_{\min}$ and
  $\lambda_{\max}$ such that $0 < \lambda_{\min}  \le \lambda \le \lambda_{\max} $. In the analysis we show that the estimates are independent of $\lambda_{\max}$, i.e., the results are uniform in the almost incompressible limit, even for the heterogeneous case. \cred{Heterogeneous permeability and/or heterogeneous Lam\'e parameters will be used in some of the numerical examples of Section~\ref{sec:results}.}

\subsection{Initial, boundary and transmission conditions}
To close the system composed by \eqref{eq:momentumA}, \eqref{eq:massA},  \eqref{eq:momentumPD}, \eqref{eq:massPD}, we need to provide suitable initial data, boundary conditions, and adequate transmission assumptions. 
Without losing generality, we suppose that
\begin{equation}\label{eq:initial}
\bu(0) = \bu_0, \quad p_P(0) = p_{P,0} \quad \text{in $\Omega\times\{0\}$,}
\end{equation}
while for the rest of the variables we will construct compatible initial data. In particular, and for sake of the energy estimates to be addressed in Section~\ref{sec:stability}, we will require initial displacement and an initial total pressure (which in turn is computed from the initial displacement and the initial fluid pressure).  

On the boundary $\Gamma_F$ of the fluid domain we can apply conditions on either the velocity, or the stress tensor; we thus decomposed it between $\Gamma_F^{\bu}$ and $\Gamma_F^{\bsigma}$, 
with $|\Gamma_F^{\bu}|>0$,  
 where we impose, respectively, no slip velocities and zero normal total stresses as
\begin{subequations}
	\begin{align}\label{eq:bcAD}
	\bu & = \cero & \text{on $\Gamma_F^{\bu}\times(0,\tfinal]$},\\
	\label{eq:bcAN}
	[2\mu_f\beps(\bu)-p_F\bI]\nn & = \cero& \text{on $\Gamma_F^{\bsigma}\times(0,\tfinal]$}.
	\end{align}
\end{subequations}
Similarly, on the boundary $\Gamma_P$ of the poroelastic domain we can prescribe conditions on either the displacement or the traction and either the pressure or the fluid flux $\mathbf{q} = - \frac{\kappa}{\mu_f} (\nabla p_P - \rho_f \gg)$. We thus divide the boundary into $\Gamma_P^{p_P}$ and $\Gamma_P^{\bd}$, with $|\Gamma_P^{p_P}|>0$ and $|\Gamma_P^{\bd}|>0$,   where we apply, respectively,
\begin{subequations}
	\begin{align}
	\label{eq:bcMd}
	\bd = \cero\quad \text{and} \quad \frac{\kappa}{\mu_f} (\nabla p_P - \rho_f \gg) \cdot\nn &= 0 &\text{on $\Gamma_P^{\bd}\times(0,\tfinal]$},\\
	\label{eq:bcMu}
	[2\mu_s\beps(\bd) -\varphi \bI]\nn = \cero \quad\text{and}\quad p_P&={0}  &\text{on $\Gamma_P^{p_P}\times(0,\tfinal]$}.
	\end{align}
\end{subequations}
Next, and following \cite{badia09,bukac15,murad01,showalter05}, we consider transmission conditions on $\Sigma$ accounting for the continuity of normal 
fluxes, momentum conservation, balance of fluid normal stresses, and the so-called Beavers-Joseph-Saffman {(BJS)} condition for tangential fluid forces 
\begin{subequations}
	\begin{align}
	\label{eq:inter-U}
	\bu\cdot\nn & = \left(\partial_t \bd  - \frac{\kappa}{\mu_f} (\nabla p_P - \rho_f \gg) \right)\cdot \nn & \text{on $\Sigma\times(0,\tfinal]$},\\
	\label{eq:inter-sigma}
	(2\mu_f \beps(\bu) - p_F\bI)\nn & = (2\mu_s\beps(\bd) -\varphi \bI ) \nn & \text{on $\Sigma\times(0,\tfinal]$},\\
	\label{eq:inter-sigmaf}
	-\nn\cdot (2\mu_f \beps(\bu) - p_F\bI)\nn & =  \cred{p_P} & \text{on $\Sigma\times(0,\tfinal]$},\\
	\label{eq:inter-bjs}
	- {\bt^j}\cdot (2\mu_f \beps(\bu) - p_F\bI)\nn & = \frac{\gamma\mu_f}{\sqrt{\kappa}} (\bu - \partial_t \bd)\cdot {\bt^j,\quad 1\leq j \leq d-1}  & \text{on $\Sigma\times(0,\tfinal]$},
	\end{align}\end{subequations}
where $\gamma>0$ is the slip rate coefficient (or tangential resistance parameter), and we recall that the normal $\nn$ on the interface is understood as pointing from the fluid domain $\Omega_F$ towards the porous structure $\Omega_P$, while {$\bt^1$ stands for the 
tangent vector on $\Sigma$ (for the case of $d=2$, while for 3D $\bt^1,\bt^2$ represent} the two tangent vectors on the interface, normal to $\nn$). 

\section{Weak formulation}\label{sec:weak}
Apart from the nomenclature introduced at the beginning of the section, conventional notation will be adopted throughout the paper. For Lipschitz domains $\Xi$ in $\mathbb{R}^d$, and for $s \in \mathbb{N}$, $k\in \mathbb{N}\cup \infty$ we denote by $W^{k,s}(\Xi)$ the space of all $L^s(\Xi)$ integrable functions with weak derivatives up to order $s$ being also   $L^s(\Xi)$ integrable. As usual, for the special case of $s=2$ we write $H^s(\Xi):=W^{k,2}(\Xi)$ and use boldfaces to refer to vector-valued functions and function spaces, e.g., $\bH^s(\Xi):=[H^{s}(\Xi)]^d$. We will further utilise the Bochner space-time norms, for a separable Banach space $\rV$ and
$f:(0,\tfinal) \to \rV$,
$\|f\|^2_{L^2(0,\tfinal;\rV)} := \int^{\tfinal}_0 \|f(t)\|^2_{\rV} \,\dt$
and
$\|f\|_{L^\infty(0,\tfinal;\rV)} := \mathop{\esssup}\limits_{t\in (0,\tfinal)} \|f(t)\|_{\rV}$.
By $C$ we will denote generic constants that are independent of the mesh size. 

\subsection{Cartesian case}
The initial step in deriving the finite element scheme consists in stating a weak form for \eqref{eq:momentumA}-\eqref{eq:massM}. We proceed to 
test these equations against suitable smooth functions and to integrate over the corresponding subdomain. After applying 
integration by parts wherever adequate, 
 we formally end up with the following remainder on the interface 
\[I_\Sigma = - \langle (2\mu_f \beps(\bu) - p_F\bI) \nn, \bv \rangle_\Sigma +  \langle (2\mu_s\beps(\bd) -\varphi \bI ) \nn, \bw \rangle_\Sigma +\langle  \frac{\kappa}{\mu_f} \nabla p_P \cdot \nn, q_P \rangle_\Sigma,\] 
where $\langle\cdot,\cdot\rangle_\Sigma$ denotes the \cred{duality pairing} between the trace functional space space $H^{1/2}(\Sigma)$ and its dual $H^{-1/2}(\Sigma)$. Then, as in, e.g., \cite{karper09}, we proceed to use each of the transmission conditions \eqref{eq:inter-U}-\eqref{eq:inter-bjs}, yielding the expression 
\[I_\Sigma = \langle \cred{ p_P}, (\bv - \bw)\cdot \nn \rangle_\Sigma + {\sum_{j=1}^{d-1}}\langle \frac{\gamma\mu_f}{\sqrt{\kappa}} (\bu - \partial_t \bd)\cdot {\bt^j},  (\bv-\bw)\cdot {\bt^j}\rangle_\Sigma -\langle (\bu-\partial_t\bd)\cdot \nn, q_P \rangle_\Sigma.\]
This 
interfacial term is well-defined because of the regularity of the entities involved, and this implies that we do not require additional Lagrange multipliers to realise the coupling conditions. Also, in view of the boundary conditions we  
define the Hilbert spaces 
\begin{gather*}
\bH^1_\star(\Omega_F) = \{ \bv\in \bH^1(\Omega_F): \bv|_{\Gamma_F^{\bu}} = \cero\}, \quad 
\bH^1_\star(\Omega_P) = \{ \bw\in \bH^1(\Omega_P): \bw|_{\Gamma_P^{\bd}} = \cero\},\\
 H^1_\star(\Omega_P) = \{ q_P\in H^1(\Omega_P): q_P|_{\Gamma_P^{p_P}} = 0\},
\end{gather*}
associated with the classical norms in $\bH^1(\Omega_F),\bH^1(\Omega_P)$, and $H^1(\Omega_P)$,
respectively. Consequently we have the following 
mixed \cred{weak} form: For $t \in [0,\tfinal]$, find $\bu\in\bH^1_{\star}(\Omega_F)$, $p_F\in L^2(\Omega_F)$,  $\bd\in\bH^1_{\star}(\Omega_P)$, $p_P\in H^1_{\star}(\Omega_P)$, $\varphi\in L^2(\Omega_P)$, such that 
\begin{subequations}\label{eq:weak} 
\begin{align}
 & a_1^F(\partial_t\bu,\bv) + a_2^F(\bu,\bv) + c^F(\bu,\bu;\bv) + b^F_1(\bv,p_F) \nonumber \\ 
& \qquad\quad + \cred{ b^\Sigma_2(\bv,p_P)} + b_3^\Sigma(\bv,\partial_t\bd)  = F^F(\bv) &\forall \bv \in \bH^1_{\star}(\Omega_F), \label{weak-1}\\
& - b^F_1(\bu,q_F) = 0 & \forall q_F\in  L^2(\Omega_F), \label{weak-2}\\
& b_3^\Sigma(\bu,\bw)  + \cred{b_4^\Sigma(\bw,p_P)} + a_1^P(\bd,\bw) \nonumber \\
& \qquad\quad  + a_2^\Sigma(\partial_t\bd,\bw)
+ b_1^P(\bw,\varphi) = F^P(\bw) &\forall \bw \in \bH^1_{\star}(\Omega_P), \label{weak-3}\\
& - b_2^\Sigma(\bu,q_P)  - b_4^\Sigma(\partial_t \bd,q_P)
+ a_3^P(\partial_t p_P,q_P) \nonumber \\
& \qquad\quad + a_4^P(p_P,q_P)
- b_2^P(\partial_t \varphi,q_P)  = G(q_P) &\forall q_P \in H^1_{\star}(\Omega_P), \label{weak-4}\\
& - b_1^P(\bd,\psi) -  b_2^P(\psi,p_P) + a_5^P(\varphi,\psi)  = 0 & \forall \psi\in L^2(\Omega_P),
\label{weak-5}
\end{align}\end{subequations}
 where the \cred{bilinear and trilinear} forms and linear functionals are defined as 
 \begin{gather}
   a_1^F\!(\bu,\bv) = \rho_f\!\int_{\Omega_F}\!\!\! \bu\cdot\bv, \quad 
a_2^F\!(\bu,\bv) = 2\mu_f\!\int_{\Omega_F}\! \!\beps(\bu):\beps(\bv) +
 {\sum_{j=1}^{d-1}} \langle  \frac{\gamma\mu_f}{\sqrt{\kappa}} \bu\cdot{\bt^j},\bv\cdot{\bt^j}\rangle_\Sigma, \nonumber \\  
 c^F\!(\bu,\bw; \bv) =  \rho_f\!\!\int_{\Omega_F}\!\! \!(\bu\cdot\nabla\bw )\cdot \bv, 
 \quad 
  b^F_1(\bv,q_F) = -\int_{\Omega_F} q_F\vdiv\bv,\quad  b_1^P(\bw,\psi) =  -\int_{\Omega_P} \psi\vdiv\bw,
 \nonumber\\
 b_2^\Sigma(\bv,q_P) = \langle q_P, \bv\cdot\nn\rangle_\Sigma, \quad 
 b_3^\Sigma(\bv,\bw) = - {\sum_{j=1}^{d-1}}\langle  \frac{\gamma\mu_f }{\sqrt{\kappa}} \bv \cdot {\bt^j}, \bw\cdot{\bt^j}\rangle_\Sigma,\quad b_4^\Sigma(\bw,q_P) =  -\langle q_P, \bw\cdot\nn\rangle_\Sigma, \nonumber\\
  a_1^P(\bd,\bw) = 2\mu_s\int_{\Omega_P} \beps(\bd):\beps(\bw), \quad
 a_2^\Sigma(\bd,\bw) = {\sum_{j=1}^{d-1}} \langle  \frac{\gamma \mu_f}{\sqrt{\kappa}} \bd \cdot {\bt^j}, \bw\cdot{\bt^j}\rangle_\Sigma, \label{eq:forms}\\
  a_3^P(p_P,q_P) =  \bigl(C_0 + \frac{\alpha^2}{\lambda}\bigr)\int_{\Omega_P} p_P q_P, \quad
  a_4^P(p_P,q_P) =  \int_{\Omega_P} \frac{\kappa}{\mu_f}\nabla p_P \cdot\nabla q_P,\nonumber\\
 b_2^P(\psi,q_P) = \frac{\alpha}{\lambda} \int_{\Omega_P} \psi q_P,\quad 
 a_5^P(\varphi,\psi) =  \frac{1}{\lambda}\int_{\Omega_P} \varphi \psi, \quad 
 F^F(\bv) = \rho_f\int_{\Omega_F} \gg\cdot\bv, \nonumber\\ 
 F^P(\bw) = \rho_s\int_{\Omega_P} \ff\cdot\bw, \quad 
 G(q_P) = \int_{\Omega_P} \rho_f\frac{\kappa}{\mu_f}\gg\cdot\nabla q_P
 - \langle \rho_f\frac{\kappa}{\mu_f}\gg\cdot\nn,q_P\rangle_\Sigma.\nonumber
   \end{gather}

\subsection{Axisymmetric case}
For the specific application of interfacial flow in the eye, the radial symmetry of the domain and of the flow conditions could be better represented using axisymmetric formulations as in \cite{anaya19,crowder13,martinez19}. Then, the domain as well as the expected flow properties are all symmetric with  
	respect to the axis of symmetry $\Gamma^{\mathrm{axisymm}}$. The model equations 
	can be written in the meridional domain $\Omega$ (making abuse of notation, and referring to  Figure~\ref{fig:sketch}).  In such a setting 
	the fluid velocity and solid displacement only possess  radial and vertical 
	components and we recall that the divergence operator of the generic vector field $\bv$ in axisymmetric coordinates (in radial and height variables $r,z$) is 
	\[\vdiva \bv := \partial_z v_z+\frac{1}{r}\partial_r(rv_r),
	\]
while the notation of the gradient coincides with that in Cartesian coordinates. The \cred{weak} formulation \eqref{eq:weak} adopts the following mo\-di\-fications (again making abuse of notation, the unknowns 
are denoted the same as in the Cartesian case): Find $\bu\in\widehat{\bV}$, $p_F\in \widehat{\rQ}^F$,  $\bd\in\widehat{\bW}$, $p_P\in \widehat{\rQ}^P$, $\varphi\in \widehat{\rZ}$, such that 
\begin{subequations}\label{eq:axisym}
\begin{align}
& \rho_f\int_{\Omega_F}\! \partial_t \bu\cdot\bv\, r\dr\dz +2\mu_f\int_{\Omega_F}\! \beps(\bu):\beps(\bv) r\dr\dz +2\mu_f\int_{\Omega_F}\! \frac{1}{r}u_rv_r \dr\dz  -\int_{\Omega_F} p_F\vdiva\bv \, r\dr\dz  \nonumber \\
& \quad +\int_{\Sigma}  \frac{\gamma\mu_f}{\sqrt{\kappa}} (\bu-\partial_t\bd)\!\cdot\bt\bv\cdot\bt  r\dr\dz +  \!\! \cred{\int_{\Sigma} p_F \bv\cdot\nn r \dr\dz} = \rho_f\!\!\int_{\Omega_F}\! \gg\cdot\bv r \dr\dz\quad \forall \bv\in\widehat{\bV},\\
& -\int_{\Omega_F} q_F\vdiva\bu \, r\dr\dz  = 0 \qquad \forall q_F\in\widehat{\rQ}^F, \\
& 2\mu_s\int_{\Omega_P} \beps(\bd):\beps(\bw)\, r\dr\dz  +2\mu_s\int_{\Omega_P}\! \frac{1}{r}d_rw_r \dr\dz  - \int_{\Sigma} \frac{\gamma \mu_f}{\sqrt{\kappa}} (\bu-\partial_t \bd) \cdot \bt \bw\cdot\bt r\dr\dz \nonumber\\ 
& \qquad  -\int_{\Omega_P} \varphi\vdiva\bw \, r\dr\dz -\cred{\int_{\Sigma} p_P \bw\cdot\nn \dr\dz} = \rho_s\int_{\Omega_P} \ff\cdot\bw \, r \dr\dz \qquad \forall \bw\in\widehat{\bW},\\
&  \bigl(C_0 + \frac{\alpha^2}{\lambda}\bigr)\int_{\Omega_P} \partial_t p_P q_P \, r\dr\dz  +  \int_{\Omega_P} \frac{\kappa}{\mu_f}\nabla p_P \cdot\nabla q_P \, r\dr\dz 
- \frac{\alpha}{\lambda} \int_{\Omega_P} \partial_t\varphi q_P \, r\dr\dz \nonumber\\
& \quad + \int_{\Sigma} q_P (\bu - \partial_t\bd) \cdot\nn r\dr\dz  = \int_{\Omega_P}\!\! \rho_f\gg\cdot\nabla q_P  r\dr\dz - \int_\Sigma \!\rho_f\gg\cdot\nn q_P r \dr\dz\quad \forall q_P\in\widehat{\rQ}^P,\\
&  -\int_{\Omega_F} \psi\vdiva\bv \, r\dr\dz + \frac{\alpha}{\lambda} \int_{\Omega_P} p_P\psi \, r\dr\dz 
-\frac{1}{\lambda} \int_{\Omega_P} \varphi\psi \, r\dr\dz = 0\qquad \forall \psi\in \widehat{\rZ}.
 \end{align}
 \end{subequations}
 
Here the functional spaces are now defined as 
\begin{gather*}
\widehat{\bV} := \{ \bv \in V_1^1(\Omega_F)\times \rH_1^1(\Omega_F): \bv|_{\Gamma_F^{\bu}} = \cero \  \text{and}\ \bv\cdot\nn|_{\Gamma^{\text{axisymm}}} = 0\},\quad 
\widehat{\rQ}^F := L^2_1(\Omega_F), \\ 
\widehat{\bW} := \{ \bw \in V_1^1(\Omega_P)\times \rH_1^1(\Omega_P): \bw|_{\Gamma_F^{\bd}} = \cero\},\ 
\widehat{\rQ}^P := \{ q_P \in \rH_1^1(\Omega_P) : q_P|_{\Gamma_P^{p_P}} = 0\},\ 
\widehat{\rZ} := L^2_1(\Omega_P),
\end{gather*}
where, for $m\in \mathbb{R}$, $1\leq p <\infty$ the weighted functional spaces adopt the specification 
\[L_m^p(\Omega_i) = \{ v: \|v\|^p_{m,p,\Omega_i}:=\int_{\Omega_i} |v|^p r^{m} \dr\dz< \infty\},\]
and
\[ \rH_1^1(\Omega_i) := \{ v \in L_1^2(\Omega_i): \nabla v \in \bL_1^2(\Omega_i)\}, \quad 
V_1^1(\Omega_i) := \rH_1^1(\Omega_i) \cap L_{-1}^2(\Omega_i).\]
 
\section{Well-posedness of the weak formulation}\label{sec:wellp}
The following analysis is confined to the Cartesian case. Furthermore, we
focus on the quasi-static Biot-Stokes model, i.e., we neglect the
terms $a_1^F$ and $c^F$ in \eqref{weak-1}, as this is the typical flow
regime for the application of interest. We also restrict our attention
to the case $\tilde \alpha = 1$. The solvability analysis is based on a Galerkin
argument, where one considers the semi-discrete continuous in time formulation
with a discretisation parameter $h$. We establish that it has a unique solution
and derive stability bounds. Then, owing to a weak compactness argument, we pass to
the limit $h \to 0$ and obtain existence and uniqueness of a weak solution. 

\subsection{Semi-discrete mixed finite element formulation}
In addition to the assumptions stated before on the domain geometry, to avoid additional technicalities, we operate under the condition that $\Omega$ is a polytope. 
We denote by $\{\cT_{h}\}_{h>0}$ a shape-regular family of finite element partitions of 
$\bar\Omega$, conformed by tetrahedra (or triangles 
in 2D) $K$ of diameter $h_K$, with mesh size
$h:=\max\{h_K:  K\in\cT_{h}\}$. 
\cred{The} finite-dimensional subspaces for fluid velocity, fluid pressure, porous displacement, 
porous fluid pressure, and total pressure are \cred{denoted 
 $\bV_h,\rQ^F_h,\bW_h,\rQ^P_h,\rZ_h$, respectively. It is sufficient for the following analysis that the pairs $(\bV_h,\rQ^F_h)$ and $(\bW_h,\rZ_h)$ are Stokes inf-sup stable, in the sense that 
  there exist positive constants $\beta_F$ and $\beta_P$ independent of $h$ such that
\begin{subequations}
\begin{align}
  & \forall \, q_{F,h} \in \rQ^F_h, \quad \sup_{\cero \ne \bv_h \in \bV_h}
  \frac{b_1^F(\bv_h,q_{F,h})}{\|\bv_h\|_{\bH^1(\Omega_F)}} \ge \beta_F \|q_{F,h}\|_{L^2(\Omega_F)},
  \label{inf-sup-stokes} \\
  & \forall \, \psi_h \in \rZ_h, \quad \sup_{\cero \ne \bw_h \in \bW_h}
  \frac{b_1^P(\bw_h,\psi_h)}{\|\bw_h\|_{\bH^1(\Omega_P)}} \ge \beta_P \|\psi_h\|_{L^2(\Omega_P)}.
  \label{inf-sup-elast} 
\end{align}\end{subequations}
Feasible choices are   Taylor-Hood,   the MINI element, Crouzeix-Raviart, Scott-Vogelius, Guzm\'an-Neilan, Bernardi-Raugel, equal-order stabilised methods (including total pressure projection stabilisation), divergence-conforming stabilised methods, and many others. 
If taking, for example, generalised Taylor-Hood elements of degree $(k,k-1)$ for $\bV_h \times \rQ_h^F$ and $\bW_h \times \rZ_h$, alongside a piecewise continuous and polynomial space of degree $k$ for $\rQ_h^P$, then their interpolation properties will yield a method of overall order $k$ in space when the displacement and velocity errors are measured in the $\bH^1$-norm, the total pressure and Stokes fluid pressure in the $L^2$-norm, and the Biot fluid pressure in the $H^1$-norm.  A more general case, with different polynomial degrees for the different finite element spaces, is addressed in Section~\ref{sec:error-fully}, below. }

We look for
$(\bu_h, p_{F,h}, \bd_h, p_{P,h}, \varphi_h): [0,\tfinal] \to
\bV_h\times \rQ^F_h\times\bW_h\times\rQ^P_h\times\rZ_h=:\mathbf{H}_h$
such that for a.e. $t \in (0,\tfinal]$,
\begin{subequations}\label{eq:method} 
\begin{align}
  a_2^F(\bu_h,\bv_h) + b^F_1(\bv_h,p_{F,h}) 
  + b^\Sigma_2(\bv_h,p_{P,h}) + b_3^\Sigma(\bv_h,\partial_t\bd_h)
 & = F^F(\bv_h) & \! \forall \bv_h \in \bV_h, \label{method-1}\\
 - b^F_1(\bu_h,q_{F,h}) &= 0 & \! \forall q_{F,h}\in \rQ_h^F, \label{method-2} \\
 b_3^\Sigma(\bu_h,\bw_h)  +  b_4^\Sigma(\bw_h,p_{P,h}) + a_1^P(\bd_h,\bw_h) &\nonumber \\
 + a_2^\Sigma(\partial_t\bd_h,\bw_h)
+ b_1^P(\bw_h,\varphi_h) & = F^P(\bw_h) & \! \forall \bw_h \in \bW_h, \label{method-3} \\
 - b_2^\Sigma(\bu_h,q_{P,h})  - b_4^\Sigma(\partial_t \bd_h,q_{P,h})
+ a_3^P(\partial_t p_{P,h},q_{P,h}) &\nonumber \\
+ a_4^P(p_{P,h},q_{P,h}) 
- b_2^P(\partial_t \varphi_h,q_{P,h})& = G(q_{P,h}) & \! \forall q_{P,h} \in \rQ_h^P,
\label{method-4}\\
 - b_1^P(\bd_h,\psi_h) -  b_2^P(\psi_h,p_{P,h}) + a_5^P(\varphi_h,\psi_h) &= 0 &
\! \forall \psi_h \in \rZ_h. \label{method-5}
\end{align}\end{subequations}
This is a system of differential-algebraic equations (DAE) that can be written in an
operator form as
\begin{equation}\label{eq:semi-discrete}
\partial_t \cN \uu_h(t) + \cM\uu_h(t) = \cF(t), 
\end{equation}
with $\uu_h:=[ \bu_h \ p_{F,h} \ \bd_h \ p_{P,h} \ \varphi_h]^{\tt t}$,
\begin{gather*}
\cN  = \left[\begin{array}{cc:ccc}
\cero & \cero & (\cB_3^\Sigma)' & \cero & \cero \\[1ex]
\cero & \cero & \cero & \cero &\cero \\
\hdashline
\cero & \cero & \cA_2^\Sigma & \cero & \cero \vphantom{\int^{X^X}} \\[1ex]
\cero & \cero & -\cB_4^\Sigma  & \cA_3^P & -(\cB_2^P)' \\[1ex]
\cero & \cero & \cero & \cero & \cero 
  \end{array}\right]\!,  \quad 
\cM = \left[\begin{array}{cc:ccc}
\cA_2^F & (\cB_1^F)' & \cero & (\cB_2^\Sigma)' & \cero \\[1ex]
-\cB_1^F & \cero & \cero & \cero &\cero \\
\hdashline
\cB_3^\Sigma & \cero& \cA_1^P & (\cB_4^\Sigma)' & (\cB_1^P)' \vphantom{\int^{X^X}}\\[1ex]
-\cB_2^\Sigma & \cero & \cero  &\cA_4^P & \cero \\[1ex]
\cero & \cero & -\cB_1^P & -\cB_2^P & \cA_5^P 
  \end{array}\right]\!,  \quad 
\cF = \left[ \begin{array}{c} \cF^{F} \\ \cero \\ \cF^{P} \\ \cG \\ \cero \end{array} \right],
\end{gather*}
where the operators in calligraphic letters are 
induced by the  forms in \eqref{eq:forms}.
The symbol $(\cdot)'$ denotes the dual operator resulting in the transpose block 
matrix of a given elementary block. 

We next discuss the initial condition for \eqref{eq:semi-discrete}.
Recall from \eqref{eq:initial} that we are given initial data $p_{P,0}$ and note that
we do not use the initial data $\bu_0$, since we are considering the quasi-static Stokes
model. We assume that $p_{P,0} \in H^1_\star(\Omega_p)$ and take $p_{P,h}(0) = p_{P,h,0}$,
where $p_{P,h,0} \in \rQ_h^P$ is the $L^2$-projection of $p_{P,0}$. In addition,
we need initial data $\bd_{h,0} \in \bW_h$ and $\varphi_{h,0} \in Z_h$ such
that $\bd_h(0) = \bd_{h,0}$ and $\varphi_h(0) = \varphi_{h,0}$. The construction
of such data will be discussed in Theorem~\ref{thm:well-posed}.

%

\subsection{Well-posedness of the semi-discrete problem}\label{sec:stability}

The well-posedness of the semi-discrete formulation could be analysed by recasting the 
system as a general parabolic operator with degenerate time derivative, as recently 
proposed in \cite{ambar20} for the interaction of non-Newtonian fluids and poroelastic 
media. This would require, however, to include the solid velocity as a new unknown. Instead,
we study it using the theory of DAE, following the approach
from \cite{ambar18}.

In the forthcoming analysis we will appeal to the Poincar\'e inequality
\begin{equation}\label{Poincare}
  \|\nabla q_P\|_{L^2(\Omega_P)} \ge c_P \|q_P\|_{H^1(\Omega_P)},
  \quad \forall \, q_P \in H^1_\star(\Omega_P),
\end{equation}
Korn's inequality
\begin{equation}\label{Korn}
  \|\beps(\bv)\|_{\bL^2(\Omega_F)} \ge c_K^F \|\bv\|_{\bH^1(\Omega_F)},
  \qquad 
  \|\beps(\bw)\|_{\bL^2(\Omega_P)} \ge c_K^P \|\bw\|_{\bH^1(\Omega_P)},
\end{equation}
for all $\bv \in \bH^1_\star(\Omega_F)$, $\bw \in \bH^1_\star(\Omega_P)$,  
and the trace inequality
\begin{equation}\label{trace}
  \|\bv\|_{\bL^2(\Sigma)} \le C_{\Sigma}^F \|\bv\|_{\bH^1(\Omega_F)},
  \qquad 
  \|q_P\|_{L^2(\Sigma)} \le C_{\Sigma}^P \|q_P\|_{H^1(\Omega_P)}, 
\end{equation}
for all $\bv \in \bH^1(\Omega_F)$, $q_P \in H^1(\Omega_P)$. We also assume that there exist constants $0 < \kappa_1 < \kappa_2 < \infty$ such that
\begin{equation}\label{perm}
  \kappa_1 \le \kappa(\bx) \le \kappa_2 \quad \text{for a.e. } \bx \in \Omega_P.
\end{equation}

\begin{lemma}\label{lem:cont}
The bilinear forms that appear in \eqref{eq:method} are continuous in the spaces
$\bV$, $\rQ^F$, $\bW$, $\rQ^P$, and $\rZ$. If $\ff: [0,\tfinal] \to \bL^2(\Omega_P)$,
then the linear functionals on the right-hand sides are also continuous.
\end{lemma}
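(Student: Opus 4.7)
The plan is to verify continuity form by form, grouping them into three categories: pure volume integrals, interface (trace) terms, and the linear functionals. In every case the argument is one application of Cauchy--Schwarz (in $L^2(\Omega_F)$, $L^2(\Omega_P)$, or $L^2(\Sigma)$) followed by a bound on the bounded coefficients and, where needed, the trace inequality \eqref{trace}. No Korn or Poincar\'e inequality is needed at this stage since continuity only requires control by the full $\bH^1$ or $H^1$ norm on the left, which is exactly the norm chosen for $\bV$, $\bW$, $\rQ^P$.

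First, for the volume forms $a_1^F$, the elasticity piece of $a_2^F$, $b_1^F$, $b_1^P$, $a_1^P$, $a_3^P$, $a_5^P$, $b_2^P$, Cauchy--Schwarz in the appropriate $L^2$ inner product yields continuity with constants depending on $\rho_f$, $\mu_f$, $\mu_s$, $C_0 + \alpha^2/\lambda$, $\alpha/\lambda$, and $1/\lambda$, respectively; the $\vdiv$ terms are bounded using $\|\vdiv\bv\|_{L^2} \le C\|\bv\|_{\bH^1}$, and $\|\beps(\bv)\|_{\bL^2} \le C\|\bv\|_{\bH^1}$. For $a_4^P$, the upper bound $\kappa \le \kappa_2$ from \eqref{perm} gives $|a_4^P(p_P,q_P)| \le (\kappa_2/\mu_f) \|\nabla p_P\|_{\bL^2(\Omega_P)} \|\nabla q_P\|_{\bL^2(\Omega_P)}$, which is bounded by the $H^1$-norms.

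Second, for the interface contributions (the BJS piece of $a_2^F$, $a_2^\Sigma$, and $b_3^\Sigma$, as well as $b_2^\Sigma$ and $b_4^\Sigma$), the lower bound $\kappa \ge \kappa_1$ in \eqref{perm} gives $\gamma\mu_f/\sqrt{\kappa} \le \gamma\mu_f/\sqrt{\kappa_1}$, so Cauchy--Schwarz on $\Sigma$ combined with the trace inequality \eqref{trace} produces estimates of the form
\[
|a_2^\Sigma(\bd,\bw)| \le \frac{(d-1)\gamma\mu_f}{\sqrt{\kappa_1}} \, C_\Sigma^P \, C_\Sigma^P \, \|\bd\|_{\bH^1(\Omega_P)} \|\bw\|_{\bH^1(\Omega_P)},
\]
and analogously for the other interface forms, using the appropriate $C_\Sigma^F$ or $C_\Sigma^P$. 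The duality pairings $\langle \cdot,\cdot\rangle_\Sigma$ reduce to $L^2(\Sigma)$ inner products because the traces of $\bH^1$ and $H^1$ functions lie in $\bL^2(\Sigma)$, which is what \eqref{trace} quantifies.

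Third, for the functionals, $F^F$ and $F^P$ are immediate from Cauchy--Schwarz and the assumption $\ff(t) \in \bL^2(\Omega_P)$ (and analogously $\gg \in \bL^2(\Omega_F)$). For $G$, the volume part is bounded by $(\kappa_2/\mu_f)\rho_f \|\gg\|_{\bL^2(\Omega_P)} \|\nabla q_P\|_{\bL^2(\Omega_P)}$, while the interface term requires the trace inequality to estimate $\|q_P\|_{L^2(\Sigma)}$ and a trace of $\gg$ on $\Sigma$ (assumed at least in $\bL^2(\Sigma)$). The only mild delicacy is keeping careful track of the $\kappa$-dependence through $\kappa_1, \kappa_2$ and absorbing the tangent-sum $\sum_{j=1}^{d-1}$ into an overall constant; all other steps are routine. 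I do not expect any real obstacle, as no inf-sup or coercivity is claimed here.
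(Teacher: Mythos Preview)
Your proposal is correct and follows exactly the paper's own approach: the paper's proof consists of a single sentence invoking the Cauchy--Schwarz inequality and the trace inequality \eqref{trace}, and your write-up is simply a careful unpacking of that sentence, form by form.
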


\begin{proof}
The statement of the lemma follows from the use of the Cauchy-Schwarz inequality and the trace inequality \eqref{trace}.
\end{proof}

\begin{theorem}\label{thm:well-posed}
  For each $\ff \in H^1(0,\tfinal;\bL^2(\Omega_P))$ and $p_{P,0} \in H^1_\star(\Omega_p)$,
  there exist initial data $\bu_{h,0} \in \bV_h$, $p_{F,h,0} \in \rQ^F_h$,
  $\bd_{h,0} \in \bW_h$, and $\varphi_{h,0} \in Z_h$ such that
  the semi-discrete problem \eqref{eq:semi-discrete} with initial conditions
$p_{P,h}(0) = p_{P,h,0}$, $\bd_h(0) = \bd_{h,0}$, and $\varphi_h(0) = \varphi_{h,0}$
has a unique solution satisfying
\begin{subequations}
\begin{align}
& \|\bu_h\|_{L^2(0,\tfinal;\bH^1(\Omega_F))} + \|p_{F,h}\|_{L^2(0,\tfinal;L^2(\Omega_F))}
  + \|\bd_h\|_{L^\infty(0,\tfinal;\bH^1(\Omega_P))} \nonumber \\
& \qquad
  + \sum_{j=1}^{d-1}\|(\bu_h - \partial_t\bd_h)\cdot\bt^j\|_{L^2(0,\tfinal;L^2(\Sigma))}
+ \|p_{P,h}\|_{L^\infty(0,\tfinal;L^2(\Omega_P))}
+ \|p_{P,h}\|_{L^2(0,\tfinal;H^1(\Omega_P))} \nonumber \\
& \qquad
+ \|\varphi_h\|_{L^2(0,\tfinal;L^2(\Omega_P))}
+ \frac{1}{\sqrt{\lambda}}\|\alpha \, p_{P,h} - \varphi_h\|_{L^\infty(0,\tfinal;L^2(\Omega_P))}
\nonumber \\
& \le C \left( \|\gg\|_{L^2(0,\tfinal;\bL^2(\Omega))} + \|\gg\cdot\nn\|_{L^2(0,\tfinal;L^2(\Sigma))}
+ \|\ff\|_{H^1(0,\tfinal;\bL^2(\Omega_P))} + \|p_{P,0}\|_{H^1(\Omega_P)}  \right),
\label{eq:stability}
\end{align}
and
\begin{align}
& \|\bu_h\|_{L^\infty(0,\tfinal;\bH^1(\Omega_F))} + \|p_{F,h}\|_{L^\infty(0,\tfinal;L^2(\Omega_F))}
  + \|\partial_t \bd_h\|_{L^2(0,\tfinal;\bH^1(\Omega_P))}
  \nonumber \\
  & \qquad
  + \sum_{j=1}^{d-1}\|(\bu_h - \partial_t\bd_h)\cdot\bt^j\|_{L^\infty(0,\tfinal;L^2(\Sigma))}  
+ \|\partial_t p_{P,h}\|_{L^2(0,\tfinal;L^2(\Omega_P))}
+ \|p_{P,h}\|_{L^\infty(0,\tfinal;H^1(\Omega_P))} \nonumber \\
  & \qquad
+ \|\varphi_h\|_{L^\infty(0,\tfinal;L^2(\Omega_P))}
+ \frac{1}{\sqrt{\lambda}}\|\alpha \, \partial_t p_{P,h} - \partial_t\varphi_h\|_{L^2(0,\tfinal;L^2(\Omega_P))}
\nonumber \\
& \le C \left( \|\gg\|_{L^2(0,\tfinal;\bL^2(\Omega))} + \|\gg\cdot\nn\|_{L^2(0,\tfinal;L^2(\Sigma))}
+ \|\ff\|_{H^1(0,\tfinal;\bL^2(\Omega_P))} + \|p_{P,0}\|_{H^1(\Omega_P)}  \right),
\label{eq:stability-dt}
\end{align}
\end{subequations}
with a constant $C$ independent of $\lambda_{\max}$ and $h$.
\end{theorem}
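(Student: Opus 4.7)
My plan follows the differential-algebraic equations (DAE) strategy of \cite{ambar18} in four interlocking steps: construction of compatible initial data, existence and uniqueness via reduction of the DAE to an ODE, derivation of \eqref{eq:stability}, and derivation of \eqref{eq:stability-dt}. Freezing \eqref{eq:method} at $t=0$ and dropping the time derivatives, with $p_{P,h}(0)=p_{P,h,0}$ (the $L^2$-projection of $p_{P,0}$) prescribed and $\ff(0)$ well defined via the embedding $H^1(0,\tfinal;\bL^2(\Omega_P))\hookrightarrow C([0,\tfinal];\bL^2(\Omega_P))$, I obtain a static coupled Stokes--Biot saddle-point problem whose unique solvability and continuous dependence on $\|p_{P,0}\|_{H^1(\Omega_P)}+\|\ff(0)\|_{\bL^2(\Omega_P)}$ follow from Korn \eqref{Korn}, the trace bound \eqref{trace}, positivity of the BJS contribution, and the inf-sup conditions \eqref{inf-sup-stokes}--\eqref{inf-sup-elast}. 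Exploiting these same inf-sup conditions together with the algebraic equations \eqref{method-2} and \eqref{method-5}, I eliminate $p_{F,h}$ and $\varphi_h$ in favour of $(\bu_h,\bd_h,p_{P,h})$, reducing \eqref{eq:semi-discrete} to an index-one ODE whose mass operator is positive definite by Korn plus the BJS term; Picard--Lindel\"of then yields a unique local solution, extended to $[0,\tfinal]$ by the a priori bound below.

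For \eqref{eq:stability} I would test \eqref{method-1}--\eqref{method-4} with $(\bu_h,\,p_{F,h},\,\partial_t\bd_h,\,p_{P,h})$, test the time derivative of \eqref{method-5} with $\psi_h=\varphi_h$, and sum. Several cancellations occur: the pressure-divergence pairs $b_1^F$ and $b_1^P$ vanish; the interfacial couplings $b_2^\Sigma$ and $b_4^\Sigma$ cancel symmetrically across \eqref{method-1}, \eqref{method-3} and \eqref{method-4}; and the two resulting copies of $b_3^\Sigma(\bu_h,\partial_t\bd_h)$ combine with $a_2^F$ and $a_2^\Sigma$ to produce the coercive BJS term $\sum_j\|(\gamma\mu_f/\sqrt{\kappa})^{1/2}(\bu_h-\partial_t\bd_h)\cdot\bt^j\|_{L^2(\Sigma)}^2$. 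The surviving algebraic contributions rearrange, via the key identity
\[
-b_2^P(\partial_t\varphi_h,p_{P,h})-b_2^P(\varphi_h,\partial_t p_{P,h})+a_5^P(\partial_t\varphi_h,\varphi_h)+a_3^P(\partial_t p_{P,h},p_{P,h}) = \tfrac{d}{dt}\Bigl[\tfrac{C_0}{2}\|p_{P,h}\|^2+\tfrac{1}{2\lambda}\|\alpha p_{P,h}-\varphi_h\|^2\Bigr],
\]
into a perfect time derivative, which is the mechanism producing the $\lambda$-robust control of $\lambda^{-1/2}\|\alpha p_{P,h}-\varphi_h\|$. Integration in time, Young, Poincar\'e \eqref{Poincare}, Korn, the trace inequality and \eqref{perm}, followed by inf-sup recovery of $p_{F,h}$ and $\varphi_h$ from \eqref{method-1} and \eqref{method-3}, deliver \eqref{eq:stability}.

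Estimate \eqref{eq:stability-dt} I would obtain by differentiating the whole system in time (licit since the DAE solution is $C^1$ and $\ff\in H^1$) and running the same energy argument, testing with $(\partial_t\bu_h,\partial_t p_{F,h},\partial_t\bd_h,\partial_t p_{P,h},\partial_t\varphi_h)$; this produces coercive control of $\partial_t\bd_h$ in $L^2(\bH^1)$, of $\partial_t p_{P,h}$ in $L^2(L^2)$, and of $\lambda^{-1/2}(\alpha\partial_t p_{P,h}-\partial_t\varphi_h)$ in $L^2(L^2)$, while the $L^\infty$-in-time norms of $\bu_h, p_{F,h}, p_{P,h}$ and $\varphi_h$ follow from the resulting time regularity combined with the first estimate. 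The main obstacle I expect is the control of the initial time-derivatives $\partial_t\bd_h(0), \partial_t p_{P,h}(0), \partial_t\varphi_h(0)$ arising after integration by parts in time: these must be constructed from the semi-discrete equations evaluated at $t=0$ via a further static saddle-point solve, and bounded through $\|\ff(0)\|_{\bL^2(\Omega_P)}\le C\|\ff\|_{H^1(0,\tfinal;\bL^2(\Omega_P))}$ and $\|p_{P,0}\|_{H^1(\Omega_P)}$, which is precisely why the hypothesis $\ff\in H^1$ rather than $\ff\in L^2$ is needed. Uniqueness follows from linearity: any difference of two solutions satisfies the homogeneous system with zero data and hence vanishes by \eqref{eq:stability}.
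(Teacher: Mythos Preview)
Your strategy for existence, initial data construction, and the first stability bound \eqref{eq:stability} is sound and essentially matches the paper, with one cosmetic difference: instead of eliminating $p_{F,h}$ and $\varphi_h$ to reduce the DAE to an ODE, the paper verifies directly that the matrix pencil $s\cN+\cM$ is nonsingular (at $s=1$), after first introducing auxiliary variables $\theta_h^j=\partial_t\bd_h\cdot\bt^j$ on $\Sigma$ so that consistency of the initial data with the algebraic constraints can be checked explicitly. Your ODE reduction is a legitimate alternative. For \eqref{eq:stability} you should also mention that the forcing $\rho_s\int_{\Omega_P}\ff\cdot\partial_t\bd_h$ must be integrated by parts in time before applying Young's inequality, since $\partial_t\bd_h$ is not among the coercive quantities in that estimate; the paper does this and it is where the $\|\ff\|_{H^1}$ dependence originates.

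The genuine gap is in your derivation of \eqref{eq:stability-dt}. If you differentiate the whole system and test with $(\partial_t\bu_h,\partial_t p_{F,h},\partial_t\bd_h,\partial_t p_{P,h},\partial_t\varphi_h)$, the interfacial cancellations you relied on in the first estimate no longer close. Concretely, the normal-flux pair produces $b_4^\Sigma(\partial_t\bd_h,\partial_t p_{P,h})$ from the time-differentiated \eqref{method-3} against $-b_4^\Sigma(\partial_{tt}\bd_h,\partial_t p_{P,h})$ from the time-differentiated \eqref{method-4}, and these do not cancel; likewise the BJS contributions combine to
\[
\sum_{j=1}^{d-1}\bigl\langle\tfrac{\gamma\mu_f}{\sqrt{\kappa}}\,\partial_t(\bu_h-\bd_h)\cdot\bt^j,\ \partial_t(\bu_h-\partial_t\bd_h)\cdot\bt^j\bigr\rangle_\Sigma,
\]
which is neither sign-definite nor a time derivative, and involves the uncontrolled trace $\partial_{tt}\bd_h\cdot\bt^j|_\Sigma$. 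The paper avoids this by an asymmetric choice: it differentiates only \eqref{method-1}, \eqref{method-3}, and \eqref{method-5}, leaves \eqref{method-4} undifferentiated, and crucially tests the differentiated \eqref{method-1} with $\bu_h$ rather than $\partial_t\bu_h$. This yields $\tfrac12\frac{d}{dt}\,2\mu_f\|\beps(\bu_h)\|^2$ for the Stokes term, $\tfrac12\frac{d}{dt}\|(\bu_h-\partial_t\bd_h)\cdot\bt^j\|^2_{L^2(\Sigma)}$ for the BJS term, and exact cancellation of the $b_2^\Sigma$ and $b_4^\Sigma$ pairs with the undifferentiated \eqref{method-4}, while $a_1^P(\partial_t\bd_h,\partial_t\bd_h)$ delivers the claimed $L^2(0,\tfinal;\bH^1)$ control of $\partial_t\bd_h$ directly. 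With this choice the only initial quantities that appear after integration are $\|\bu_h(0)\|_{\bH^1(\Omega_F)}$, $\|(\bu_h-\partial_t\bd_h)(0)\cdot\bt^j\|_{L^2(\Sigma)}$ and $\|\nabla p_{P,h}(0)\|_{L^2(\Omega_P)}$, all bounded by $\|p_{P,0}\|_{H^1(\Omega_P)}$ via the initial-data construction; no bound on $\partial_t\bd_h(0)$ in the full $\bH^1$ norm is needed, which is precisely the obstacle you flagged.
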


\begin{proof}
To establish existence, we employ \cite[Th. 2.3.1]{brenan95}, 
which asserts that \eqref{eq:semi-discrete} has a solution if the
matrix pencil $s\cN + \cM$ is nonsingular for some $s \ne 0$. The
solvability of the associated initial value problem requires initial
data that is consistent with the DAE system. To deal with this issue,
we first consider a related DAE system by including new variables
$\theta_h^j \in \bW_h\cdot\bt^j$, $j = 1,\ldots d-1$ and equations
\begin{equation}\label{eq:theta}
  \langle \theta_h^j,\bw_h\cdot\bt^j \rangle_\Sigma =
  \langle \partial_t \bd_h \cdot \bt^j,\bw_h\cdot\bt^j \rangle_\Sigma \quad \forall \bw_h \in \bW_h, \quad j = 1,\ldots d-1,
\end{equation}
and replacing $\partial_t \bd_h \cdot \bt^j$ with $\theta_h^j$ in
\eqref{method-1} and \eqref{method-3}. Let 
$\tilde\uu_h:=[ \bu_h \ p_{F,h} \ \bd_h \ p_{P,h} \ \varphi_h \ \theta_h^1 \cdots \, \theta_h^{d-1}]^{\tt t}$ and denote
the extended DAE system by
\begin{equation}\label{DAE-ext}
  \partial_t \tilde\cN \tilde\uu_h(t) + \tilde\cM\tilde\uu_h(t) = \tilde\cF(t).
\end{equation}
Clearly any solution of \eqref{DAE-ext} also solves \eqref{eq:semi-discrete}.
We will apply \cite[Th.~2.3.1]{brenan95} to \eqref{DAE-ext}.
We will show that the matrix $\tilde\cN + \tilde\cM$ is nonsingular by proving that the system
$(\tilde\cN + \tilde\cM)\tilde\uu_h = \cero$ has only the zero solution. By eliminating
$\theta_h^j$, this system results in $(\cN + \cM)\uu_h = \cero$. Using that
\begin{equation*}
\cN + \cM = \left[\begin{array}{cc:ccc}
\cA_2^F & (\cB_1^F)' & (\cB_3^\Sigma)' &  (\cB_2^\Sigma)' & \cero \\[1ex]
-\cB_1^F & \cero & \cero & \cero &\cero \\
\hdashline
\cB_3^\Sigma & \cero& \cA_1^P + \cA_2^\Sigma & (\cB_4^\Sigma)' & (\cB_1^P)' \vphantom{\int^{X^X}}\\[1ex]
-\cB_2^\Sigma & \cero & -\cB_4^\Sigma  &\cA_3^P + \cA_4^P & -(\cB_2^P)' \\[1ex]
\cero & \cero & -\cB_1^P & -\cB_2^P & \cA_5^P 
  \end{array}\right],
\end{equation*}
the equation $\uu_h^{\tt t}(\cN + \cM)\uu_h = 0$ gives
\begin{align*}
  &  2\mu_f\!\int_{\Omega_F}\! \!\beps(\bu_h):\beps(\bu_h)
  + \sum_{j=1}^{d-1}\langle  \frac{\gamma\mu_f}{\sqrt{\kappa}}
  (\bu_h - \bd_h) \cdot\bt^j\,,(\bu_h - \bd_h) \cdot\bt^j\rangle_\Sigma
  + 2\mu_s\int_{\Omega_P} \beps(\bd_h):\beps(\bd_h) \\
  & \qquad
  + C_0 \int_{\Omega_P} p_{P,h}^2 + \int_{\Omega_P} \frac{\kappa}{\mu_f}\nabla p_{P,h} \cdot\nabla p_{P,h}
  + \frac{1}{\lambda} \int_{\Omega_P}(\alpha \, p_{P,h} - \varphi_h)^2 = 0,
\end{align*}
which implies that $\bu_h = \cero$, $\bd_h = \cero$, $p_{P,h} = 0$, and $\varphi_h = 0$.
Equations \eqref{eq:theta} imply that $\theta_h^j = 0$. The inf-sup
condition \eqref{inf-sup-stokes} with $q_{F,h} = p_{F,h}$ and 
\eqref{method-1} give $p_{F,h} = 0$.
Therefore \eqref{DAE-ext} has a solution. 

We proceed with the construction of the initial data. We first note
that there exists a solution to \eqref{DAE-ext} satisfying $p_{P,h}(0)
= p_{P,h,0}$, since this initial condition is associated with the
differential equation \eqref{method-4}. We need to find initial values
for the rest of the variables that are consistent with the DAE system.
Let us set $\theta_h^j(0) = \theta^j_{h,0} = 0$ and consider the Stokes problem
\eqref{method-1}--\eqref{method-2} for $\bu_{h,0}$ and $p_{F,h,0}$
with data $p_{P,h,0}$ and $\theta^j_{h,0}$, which is now decoupled and
well-posed from the Stokes finite element theory. Finally, let
$\bd_{h,0}$ and $\varphi_{h,0}$ solve the problem coupling
\eqref{method-3} and \eqref{method-5} with data $p_{P,h,0}$,
$\theta^j_{h,0}$, and $\bu_{h,0}$. The well-posedness of this problem
follows from the theory of saddle-point problems \cite{boffi13}, due to the
inf-sup condition \eqref{inf-sup-elast}, see also \cite{oyarzua16}.
We further note that taking $t \to 0$ in
\eqref{eq:theta} implies that $\partial_t \bd_h(0)\cdot\bt^j|_{\Sigma} =
\theta^j_{h,0} = 0$. Now, taking $t \to 0$ in
\eqref{method-1}--\eqref{method-2}, \eqref{method-3}, and
\eqref{method-5} and using the above construction of the initial data,
we conclude that $\bu_h(0) = \bu_{h,0}$, $p_{F,h}(0) = p_{F,h,0}$,
$\bd_h(0) = \bd_{h,0}$, and $\varphi_h(0) = \varphi_{h,0}$.

We proceed with the stability bound \eqref{eq:stability}.
Differentiating \eqref{method-5} in time and
taking $(\bv_h, q_{F,h}, \linebreak \bw_h,  q_{P,h},   \psi_h) 
= (\bu_h, p_{F,h}, \partial_t \bd_h, p_{P,h}, \varphi_h)$ in \eqref{eq:method} gives
\begin{align}
  & 2\mu_f\!\int_{\Omega_F}\! \!\beps(\bu_h):\beps(\bu_h)
  + \sum_{j=1}^{d-1} \langle  \frac{\gamma\mu_f}{\sqrt{\kappa}}
  (\bu_h - \partial_t\bd_h) \cdot\bt^j\,,(\bu_h - \partial_t\bd_h) \cdot\bt^j\rangle_\Sigma
  + \frac12\frac{\partial}{\partial t} 2\mu_s\int_{\Omega_P} \beps(\bd_h):\beps(\bd_h) 
  \nonumber \\
  & \qquad
  + \frac12\frac{\partial}{\partial t} C_0 \int_{\Omega_P} p_{P,h}^2
  + \int_{\Omega_P} \frac{\kappa}{\mu_f}\nabla p_{P,h} \cdot\nabla p_{P,h}
  + \frac12\frac{\partial}{\partial t}
  \frac{1}{\lambda} \int_{\Omega_P}(\alpha \, p_{P,h} - \varphi_h)^2 \nonumber \\
  & \qquad\quad = \rho_f\int_{\Omega_F} \gg\cdot\bu_h
  + \rho_s\int_{\Omega_P} \ff\cdot\partial_t\bd_h
  + \rho_f \int_{\Omega_P} \frac{\kappa}{\mu_f}\gg\cdot\nabla p_{P,h}
  - \rho_f\langle \frac{\kappa}{\mu_f}\gg\cdot\nn,p_{P,h}\rangle_\Sigma.
  \label{eq:stab-1}
\end{align}
Integrating from $0$ to $t \in (0,\tfinal]$, we obtain
\begin{align}
  & 
  2\mu_f \int_0^t \|\beps(\bu_h)\|_{\bL^2(\Omega_F)}^2
  + \sum_{j=1}^{d-1} \gamma\mu_f \int_0^t\|\kappa^{-1/4}(\bu_h - \partial_t\bd_h)\cdot\bt^j\|_{\bL^2(\Sigma)}^2
  + \mu_s \|\beps(\bd_h)(t)\|_{\bL^2(\Omega_P)}^2 \nonumber \\
  & \quad\qquad\qquad
  + \frac{C_0}{2}\|p_{P,h}(t)\|_{L^2(\Omega_P)}^2
  + \frac{1}{\mu_f} \int_0^t \|\kappa^{1/2}\nabla p_{P,h}\|_{L^2(\Omega_P)}^2
  + \frac{1}{2\lambda}\|(\alpha \, p_{P,h} - \varphi_h)(t)\|^2_{L^2(\Omega_P)} \nonumber \\
  & \qquad\quad = \mu_s \|\beps(\bd_h)(0)\|_{\bL^2(\Omega_P)}^2
  + \frac{C_0}{2}\|p_{P,h}(0)\|_{L^2(\Omega_P)}^2
  + \frac{1}{2\lambda}\|(\alpha \, p_{P,h} - \varphi_h)(0)\|^2_{L^2(\Omega_P)} \nonumber \\
& \qquad\quad\qquad
  + \rho_f \int_0^t \int_{\Omega_F} \gg\cdot\bu_h
  - \rho_s \int_0^t \int_{\Omega_P} \partial_t\ff\cdot\bd_h
  + \rho_s \int_{\Omega_P} \ff(t)\cdot\bd_h(t)
  - \rho_s \int_{\Omega_P} \ff(0)\cdot\bd_h(0)
  \nonumber \\
& \qquad\quad\qquad
  + \rho_f \int_0^t \int_{\Omega_P} \frac{\kappa}{\mu_f}\gg\cdot\nabla p_{P,h}
  - \rho_f \int_0^t \langle \frac{\kappa}{\mu_f}\gg\cdot\nn,p_{P,h}\rangle_\Sigma, \label{eq:stab-2}
\end{align}
where we have integrated by parts in time the second term on the right-hand side in
\eqref{eq:stab-1}.  Then, on the left-hand side we use Korn's inequality
\eqref{Korn}, the Poincar\'e inequality \eqref{Poincare}, and the
permeability bound \eqref{perm}, whereas on   \cred{the right-hand} side we use
the Cauchy-Schwarz inequality, the trace inequality \eqref{trace}, and
Young's inequality, obtaining
\begin{align}
&  
  2\mu_f(c_K^F)^2 \int_0^t \|\bu_h\|_{\bH^1(\Omega_F)}^2
  + \sum_{j=1}^{d-1}\frac{\gamma\mu_f}{\sqrt\kappa_2} \int_0^t\|(\bu_h - \partial_t\bd_h)\cdot\bt^j\|_{\bL^2(\Sigma)}^2
  + \mu_s (c_K^P)^2 \|\bd_h(t)\|_{\bH^1(\Omega_P)}^2 \nonumber \\
& \qquad\qquad  
  + \frac{C_0}{2}\|p_{P,h}(t)\|_{L^2(\Omega_P)}^2
  + \frac{\kappa_1 c_P^2}{\mu_f} \int_0^t \|p_{P,h}\|_{H^1(\Omega_P)}^2
  + \frac{1}{2\lambda}\|(\alpha \, p_{P,h} - \varphi_h)(t)\|^2_{L^2(\Omega_P)}
  \nonumber \\
  & \qquad \le \mu_s \|\beps(\bd_h)(0)\|_{\bL^2(\Omega_P)}^2
  + \frac{C_0}{2}\|p_{P,h}(0)\|_{L^2(\Omega_P)}^2
  + \frac{1}{2\lambda}\|(\alpha \, p_{P,h} - \varphi_h)(0)\|^2_{L^2(\Omega_P)}
  \nonumber \\
& \qquad\quad
  + \frac{\epsilon}{2} \left(\rho_f \int_0^t \|\bu_h\|_{\bL^2(\Omega_F)}^2
  + \rho_s \|\bd_h(t)\|_{\bL^2(\Omega_P)}^2
  + \frac{\rho_f \kappa_2}{\mu_f}((C_\Sigma^P)^2+1)\int_0^t\|p_{P,h}\|^2_{H^1(\Omega_P)}  \right)
  \nonumber \\
 & \qquad\quad
  + \frac{1}{2\epsilon}\left( \rho_f \int_0^t \|\gg\|_{\bL^2(\Omega_F)}^2
  + \rho_s \|\ff(t)\|_{\bL^2(\Omega_P)}^2
  + \frac{\rho_f \kappa_2}{\mu_f}\int_0^t(\|\gg\|_{\bL^2(\Omega_P)}^2
  + \|\gg\cdot\nn\|_{L^2(\Sigma)}^2)  \right) \nonumber \\
  & \qquad\quad
  + \frac{\rho_s}{2} \int_0^t \|\bd_h\|_{\bL^2(\Omega_P)}^2
  + \frac{\rho_s}{2} \int_0^t \|\partial_t \ff\| _{\bL^2(\Omega_P)}^2
  + \frac{\rho_s}{2} \|\bd_h(0)\|_{\bL^2(\Omega_P)}^2
  + \frac{\rho_s}{2} \|\ff(0)\|_{\bL^2(\Omega_P)}^2. \label{eq:stab-3}
\end{align}
Taking $\epsilon$ sufficiently small and employing Gronwall's inequality for the term
$\displaystyle \frac{\rho_s}{2} \int_0^t \|\bd_h\|_{\bL^2(\Omega_P)}^2$, we obtain
\begin{align}
 &\quad  \int_0^t \|\bu_h\|_{\bH^1(\Omega_F)}^2
  + \sum_{j=1}^{d-1}\int_0^t\|(\bu_h - \partial_t\bd_h)\cdot\bt^j\|_{\bL^2(\Sigma)}^2
  + \|\bd_h(t)\|_{\bH^1(\Omega_P)}^2 \nonumber \\
& \qquad\qquad\qquad
  + \|p_{P,h}(t)\|_{L^2(\Omega_P)}^2
  + \int_0^t \|p_{P,h}\|_{H^1(\Omega_P)}^2
  + \frac{1}{\lambda}\|(\alpha \, p_{P,h} - \varphi_h)(t)\|^2_{L^2(\Omega_P)}
 \label{stab-bound} \\
  &   \le C\biggl(\int_0^t (\|\gg\|_{\bL^2(\Omega)}^2  +  \|\gg\cdot\nn\|_{L^2(\Sigma)}^2)
  + \|\ff(t)\|_{\bL^2(\Omega_P)}^2 \nonumber\\
  & \qquad \qquad + \|\ff(0)\|_{\bL^2(\Omega_P)}^2 
  + \int_0^t \|\partial_t \ff\| _{\bL^2(\Omega_P)}^2
  + \|p_{P,0}\|_{H^1(\Omega_P)}^2\biggr),
   \nonumber
\end{align}
with a constant $C$ independent of $\lambda_{\max}$. In the above inequality
we have bounded the initial data terms by
$C\|p_{P,0}\|_{H^1(\Omega_P)}^2$. This bound follows from the classical
stability bound for the Stokes problem
\eqref{method-1}--\eqref{method-2}, which allows to obtain
$\|\bu_{h,0}\|_{\bH^1(\Omega_F)} \le C\|p_{P,h,0}\|_{H^1(\Omega_P)}$; 
a stability bound for the saddle-point problem \eqref{method-3},
\eqref{method-5} to obtain 
\[\|\bd_{h,0}\|_{\bH^1(\Omega_P)} +
\|\varphi_{h,0}\|_{L^2(\Omega_p)} \le C
(\|\bu_{h,0}\|_{\bH^1(\Omega_F)} + \|p_{P,h,0}\|_{H^1(\Omega_P)}),\]
(cf. \cite{oyarzua16}), 
and the $H^1$-stability of the $L^2$-projection
$\|p_{P,h,0}\|_{H^1(\Omega_P)} \le C \|p_{P,0}\|_{H^1(\Omega_P)}$ (see, e.g., \cite{bramble02}).

Next we proceed with bounding $p_{F,h}$ and $\varphi_h$.
The inf-sup condition \eqref{inf-sup-stokes} together with \eqref{method-1} give
\begin{align*}
\beta_F\|p_{F,h}\|_{L^2(\Omega_F)} & \le \sup_{\cero \ne \bv_h \in \bV_h^\Sigma}
\frac{b_1^F(\bv_h,p_{F,h})}{\|\bv_h\|_{\bH^1(\Omega_F)}} \\
& =  \sup_{\cero \ne \bv_h \in \bV_h}\frac{- a_2^F(\bu_h,\bv_h)
- b^\Sigma_2(\bv_h,p_{P,h}) - b_3^\Sigma(\bv_h,\partial_t\bd_h) + F^F(\bv_h)}
     {\|\bv_h\|_{\bH^1(\Omega_F)}} \\
     & \le 2 \mu_f \|\bu_h\|_{\bH^1(\Omega_F)}
     + \sum_{j=1}^{d-1}\frac{\gamma\mu_f C_\Sigma^F}{\sqrt\kappa_1}\! \|(\bu_h - \partial_t\bd_h)\cdot\bt^j\|_{\bL^2(\Sigma)} \\
    & \qquad  + C_\Sigma^F C_\Sigma^P \|p_{P,h}\|_{H^1(\Omega_P)}\! + \rho_f \|\gg\|_{\bL^2(\Omega_F)},
\end{align*}
implying
\begin{equation}\label{pF-bound}
  \int_0^t \|p_{F,h}\|_{L^2(\Omega_F)}^2 \le C
  \int_0^t \Big( \|\bu_h\|_{\bH^1(\Omega_F)}^2
  + \sum_{j=1}^{d-1}\|(\bu_h - \partial_t\bd_h)\cdot\bt^j\|_{\bL^2(\Sigma)}^2
  + \|p_{P,h}\|_{H^1(\Omega_P)}^2 + \|\gg\|_{\bL^2(\Omega_F)}^2 \Big).
\end{equation}
Finally, using the inf-sup condition \eqref{inf-sup-elast} and \eqref{method-3}, we obtain
\begin{align*}
\beta_P\|\varphi_h\|_{L^2(\Omega_P)} & \le \sup_{\cero \ne \bw_h \in \bW_h}
\frac{b_1^P(\bw_h,\varphi_h)}{\|\bw_h\|_{\bH^1(\Omega_P)}} \\
& = \sup_{\cero \ne \bw_h \in \bW_h}\!\!\frac{
-b_3^\Sigma(\bu_h,\bw_h)  - b_4^\Sigma(\bw_h,p_{P,h})
- a_1^P(\bd_h,\bw_h) - a_2^\Sigma(\partial_t\bd_h,\bw_h) + F^P(\bw_h)}{\|\bw_h\|_{\bH^1(\Omega_P)}} \\
& \le (C_\Sigma^P)^2 \|p_{P,h}\|_{H^1(\Omega_P)}
+ \sum_{j=1}^{d-1}\frac{\gamma\mu_f C_\Sigma^P}{\sqrt\kappa_1} \|(\bu_h - \partial_t\bd_h)\cdot\bt^j\|_{\bL^2(\Sigma)} \\
& \qquad + 2 \mu_s \|\bd_h\|_{\bH^1(\Omega_P)} 
+ \rho_s \|\ff\|_{\bL^2(\Omega_P)},
\end{align*}
yielding
\begin{equation}\label{varphi-bound}
  \int_0^t \|\varphi_h\|_{L^2(\Omega_P)}^2
  \le C \int_0^t \Big( 
  \|p_{P,h}\|_{H^1(\Omega_P)}^2
  + \sum_{j=1}^{d-1}\|(\bu_h - \partial_t\bd_h)\cdot\bt^j\|_{\bL^2(\Sigma)}^2
  + \|\bd_h\|_{\bH^1(\Omega_P)}^2 + \|\ff\|_{\bL^2(\Omega_P)}^2 \Big).
\end{equation}
Combining \eqref{stab-bound}--\eqref{varphi-bound} and using Gronwall's inequality for
the third term on the right-hand side in \eqref{varphi-bound}, we obtain \eqref{eq:stability}.

The above argument implies that the solution of
\eqref{eq:semi-discrete} under the initial conditions $p_{P,h}(0) =
p_{P,h,0}$, $\bd_h(0) = \bd_{h,0}$, and $\varphi_h(0) = \varphi_{h,0}$
is unique. In particular, taking $p_{P,h,0} = 0$, $\bd_{h,0} = \cero$,
$\varphi_{h,0} = 0$, $\gg = \cero$, and $\ff = \cero$, \eqref{eq:stab-3}
implies that \eqref{stab-bound} holds with right-hand side zero. Together with
\eqref{pF-bound} and \eqref{varphi-bound}, this gives that all components of the
solution are zero, therefore the solution is unique.

We next prove the higher regularity stability bound \eqref{eq:stability-dt}. To that end,
we differentiate in time \eqref{method-1}, \eqref{method-3}, and \eqref{method-5} and
take $(\bv_h, q_{F,h}, \bw_h, q_{P,h}, \psi_h)
= (\bu_h, \partial_t p_{F,h}, \partial_t \bd_h, \partial_t p_{P,h}, \partial_t \varphi_h)$ in
\eqref{eq:method}, obtaining 
\begin{align}
  & \frac12\frac{\partial}{\partial t} 2\mu_f\!\int_{\Omega_F}\!\! \!\beps(\bu_h):\beps(\bu_h)
  + \sum_{j=1}^{d-1}\frac12\frac{\partial}{\partial t}\langle  \frac{\gamma\mu_f}{\sqrt{\kappa}}
  (\bu_h - \partial_t\bd_h)\! \cdot\bt^j,(\bu_h - \partial_t\bd_h)\! \cdot\bt^j\rangle_\Sigma
  \nonumber \\
  & \quad
  + 2\mu_s\int_{\Omega_P}\!\! \beps(\partial_t\bd_h):\beps(\partial_t\bd_h) 
  + C_0 \int_{\Omega_P} (\partial_t p_{P,h})^2
  + \frac12\frac{\partial}{\partial t}
  \int_{\Omega_P} \frac{\kappa}{\mu_f}\nabla p_{P,h} \cdot\nabla p_{P,h}
  \nonumber \\
  & \quad
  + \frac{1}{\lambda} \int_{\Omega_P}(\alpha \,  \partial_tp_{P,h} - \partial_t\varphi_h)^2
  = \rho_s\int_{\Omega_P} \partial_t\ff\cdot\partial_t\bd_h
  + \rho_f \int_{\Omega_P} \frac{\kappa}{\mu_f}\gg\cdot\partial_t \nabla p_{P,h}
  - \rho_f\langle \frac{\kappa}{\mu_f}\gg\cdot\nn,\partial_t p_{P,h}\rangle_\Sigma,
  \label{eq:stab-dt-1}
\end{align}
where we have used that $\partial_t \gg = \cero$. Integration from $0$ to $t \in (0,\tfinal]$ gives
\begin{align*}
  & 
  \mu_f \|\beps(\bu_h)(t)\|_{\bL^2(\Omega_F)}^2
  + \sum_{j=1}^{d-1}\frac12 \gamma\mu_f \|\kappa^{-1/4}(\bu_h - \partial_t\bd_h)\cdot\bt^j(t)\|_{\bL^2(\Sigma)}^2
  + \mu_s \int_0^t \|\beps(\partial_t\bd_h)\|_{\bL^2(\Omega_P)}^2 \nonumber \\
  & \qquad
  + C_0 \int_0^t \|\partial_t p_{P,h}(t)\|_{L^2(\Omega_P)}^2
  + \frac{1}{2\mu_f} \|\kappa^{1/2}\nabla p_{P,h}(t)\|_{L^2(\Omega_P)}^2
  + \frac{1}{\lambda}
  \int_0^t \|\alpha \, \partial_t p_{P,h} - \partial_t\varphi_h\|^2_{L^2(\Omega_P)} \nonumber \\
  &  = \mu_f \|\beps(\bu_h)(0)\|_{\bL^2(\Omega_F)}^2
  + \sum_{j=1}^{d-1}\frac12 \gamma\mu_f \|\kappa^{-1/4}(\bu_h - \partial_t\bd_h)\cdot\bt^j(0)\|_{\bL^2(\Sigma)}^2
  + \frac{1}{2\mu_f} \|\kappa^{1/2}\nabla p_{P,h}(0)\|_{L^2(\Omega_P)}^2 \nonumber \\
  & \ 
  + \rho_s \int_0^t \int_{\Omega_P} \partial_t\ff\cdot\partial_t\bd_h
  + \rho_f \int_{\Omega_P} \frac{\kappa}{\mu_f}\gg\cdot(\nabla p_{P,h}(t) - \nabla p_{P,h}(0)) 
  - \rho_f \langle \frac{\kappa}{\mu_f}\gg\cdot\nn,p_{P,h}(t) - p_{P,h}(0)\rangle_\Sigma,
\end{align*}
where we have integrated by parts the last two terms in \eqref{eq:stab-dt-1} and
used that $\partial_t \gg = \cero$. Next, on the left-hand side we use Korn's inequality
\eqref{Korn}, the Poincar\'e inequality \eqref{Poincare}, and the
permeability bound \eqref{perm}, while on the right-hand side we invoke Cauchy-Schwarz inequality, the trace inequality \eqref{trace}, and
Young's inequality, yielding 
\begin{align*}
  & 
  \mu_f (c_K^F)^2 \|\bu_h(t)\|_{\bH^1(\Omega_F)}^2
  + \sum_{j=1}^{d-1}\frac12 \frac{\gamma\mu_f}{\sqrt\kappa_2} \|(\bu_h - \partial_t\bd_h)\cdot\bt^j(t)\|_{\bL^2(\Sigma)}^2
  + \mu_s (c_K^P)^2  \int_0^t \|\partial_t \bd_h\|_{\bH^1(\Omega_P)}^2 \\
  & \qquad\quad\qquad  
  + C_0 \int_0^t \|\partial_t p_{P,h}(t)\|_{L^2(\Omega_P)}^2
  + \frac{\kappa_1 c_P^2}{2\mu_f} \|p_{P,h}(t)\|_{H^1(\Omega_P)}^2
  + \frac{1}{\lambda}
  \int_0^t \|\alpha \, \partial_t p_{P,h} - \partial_t\varphi_h\|^2_{L^2(\Omega_P)}
  \\
  & \qquad\quad \le \mu_f \|\beps(\bu_h)(0)\|_{\bL^2(\Omega_F)}^2
  + \sum_{j=1}^{d-1}\frac12 \frac{\gamma\mu_f}{\sqrt\kappa_1} \|(\bu_h - \partial_t\bd_h)\cdot\bt^j(0)\|_{\bL^2(\Sigma)}^2
  + \frac{\kappa_2}{2\mu_f} \|\nabla p_{P,h}(0)\|_{L^2(\Omega_P)}^2 \\
  & \qquad\quad\qquad
  + \frac{\epsilon}{2}\left(
  \rho_s\int_0^t \|\partial_t\bd_h\|_{\bL^2(\Omega_P)}^2
  + \frac{\rho_f \kappa_2}{\mu_f}((C_\Sigma^P)^2+1)
  (\|p_{P,h}(0)\|^2_{H^1(\Omega_P)} + \|p_{P,h}(t)\|^2_{H^1(\Omega_P)}) \right) \\
  & \qquad\quad\qquad
  + \frac{1}{2\epsilon} \left( \rho_s \int_0^t \|\partial_t\ff\|_{\bL^2(\Omega_P)}^2
  + \frac{\rho_f \kappa_2}{\mu_f}(\|\gg\|_{\bL^2(\Omega_P)}^2
  + \|\gg\cdot\nn\|_{L^2(\Sigma)}^2) \right).
\end{align*}
In addition, bounding the initial data terms as in \eqref{stab-bound} and taking $\epsilon$
sufficiently small, we can assert that 
\begin{align}
  & \|\bu_h(t)\|_{\bH^1(\Omega_F)}^2
  + \sum_{j=1}^{d-1}\|(\bu_h - \partial_t\bd_h)\cdot\bt^j(t)\|_{\bL^2(\Sigma)}^2
  + \int_0^t \|\partial_t \bd_h\|_{\bH^1(\Omega_P)}^2 \nonumber \\
& \qquad\qquad\quad
  + \int_0^t \|\partial_t p_{P,h}(t)\|_{L^2(\Omega_P)}^2
  + \|p_{P,h}(t)\|_{H^1(\Omega_P)}^2
  + \frac{1}{\lambda}
  \int_0^t \|\alpha \, \partial_t p_{P,h} - \partial_t\varphi_h\|^2_{L^2(\Omega_P)}
  \nonumber \\
  & \qquad\quad \le C \left(\int_0^t \|\partial_t\ff\|_{\bL^2(\Omega_P)}^2
  + \|\gg\|_{\bL^2(\Omega_P)}^2 + \|\gg\cdot\nn\|_{L^2(\Sigma)}^2 + \|p_{P,0}\|_{H^1(\Omega_P)}^2 \right),
  \label{stab-bound-dt}
\end{align}
with a constant $C$ independent of $\lambda_{\max}$. Next, using the inf-sup conditions
\eqref{inf-sup-stokes} and \eqref{inf-sup-elast}, and proceeding similarly to the derivations of \eqref{pF-bound} and
\eqref{varphi-bound}, we obtain
\begin{equation*}
  \|p_{F,h}(t)\|_{L^2(\Omega_F)}^2\! \le C \!
  \Big( \|\bu_h(t)\|_{\bH^1(\Omega_F)}^2
  + \sum_{j=1}^{d-1}\|(\bu_h \!-\! \partial_t\bd_h)\cdot\bt^j(t)\|_{\bL^2(\Sigma)}^2
  + \|p_{P,h}(t)\|_{H^1(\Omega_P)}^2 + \|\gg\|_{\bL^2(\Omega_F)}^2 \Big),
\end{equation*}
and
\begin{equation}\label{varphi-bound-dt}
  \|\varphi_h(t)\|_{L^2(\Omega_P)}^2
  \le \! C \!\Big( 
  \|p_{P,h}(t)\|_{H^1(\Omega_P)}^2
  + \sum_{j=1}^{d-1}\|(\bu_h - \partial_t\bd_h)\cdot\bt^j(t)\|_{\bL^2(\Sigma)}^2\!
  + \! \|\bd_h(t)\|_{\bH^1(\Omega_P)}^2
  + \|\ff(t)\|_{\bL^2(\Omega_P)}^2 \Big).
\end{equation}
Finally, combining \eqref{stab-bound-dt}--\eqref{varphi-bound-dt} and employing \eqref{eq:stability}
for the control of $\|\bd_h(t)\|_{\bH^1(\Omega_P)}$, we obtain the second bound \eqref{eq:stability-dt}.
\end{proof}

\begin{remark}
  We emphasise that, even though initial data was constructed for all variables, the initial
  value problem for \eqref{eq:semi-discrete} involves initial conditions only for
  $p_{P,h}$, $\bd_h$, and $\varphi_h$.
\end{remark}

\subsection{Existence, uniqueness, and stability of the weak solution}
\begin{theorem}\label{thm:well-posed-weak}
For each $\ff \in H^1(0,\tfinal;\bL^2(\Omega_P))$ and $p_{P,0} \in H^1_\star(\Omega_p)$,
  there exist initial data $\bu_{0} \in \bH^1_\star(\Omega_F)$, $p_{F,0} \in L^2(\Omega_F)$,
  $\bd_{0} \in \bH^1_\star(\Omega_P)$, and $\varphi_{0} \in L^2(\Omega_P)$ such that the
  weak formulation \eqref{eq:weak} with $a_1^F = 0$, $c^F = 0$, and $\tilde \alpha = 1$,
  complemented with the initial
  conditions $p_{P}(0) = p_{P,0}$, $\bd(0) = \bd_{0}$, and $\varphi(0) = \varphi_{0}$,
  has a unique solution satisfying
\begin{subequations}
\begin{align}
& \|\bu\|_{L^2(0,\tfinal;\bH^1(\Omega_F))} + \|p_{F}\|_{L^2(0,\tfinal;L^2(\Omega_F))}
  + \|\bd\|_{L^\infty(0,\tfinal;\bH^1(\Omega_P))}
  \nonumber \\
& \qquad
  + \sum_{j=1}^{d-1}\|(\bu - \partial_t\bd)\cdot\bt^j\|_{L^2(0,\tfinal;L^2(\Sigma))}
+ \|p_{P}\|_{L^\infty(0,\tfinal;L^2(\Omega_P))}
+ \|p_{P}\|_{L^2(0,\tfinal;H^1(\Omega_P))} \nonumber \\
& \qquad
+ \|\varphi\|_{L^2(0,\tfinal;L^2(\Omega_P))} 
+ \frac{1}{\sqrt{\lambda}}\|\alpha \, p_{P} - \varphi\|_{L^\infty(0,\tfinal;L^2(\Omega_P))}
\nonumber \\
& \le C \left( \|\gg\|_{L^2(0,\tfinal;\bL^2(\Omega))} + \|\gg\cdot\nn\|_{L^2(0,\tfinal;L^2(\Sigma))}
+ \|\ff\|_{H^1(0,\tfinal;\bL^2(\Omega_P))} + \|p_{P,0}\|_{H^1(\Omega_P)}  \right),
\label{eq:stability-weak}
\end{align}
and
\begin{align}
& \|\bu\|_{L^\infty(0,\tfinal;\bH^1(\Omega_F))} + \|p_{F}\|_{L^\infty(0,\tfinal;L^2(\Omega_F))}
+ \|\partial_t \bd\|_{L^2(0,\tfinal;\bH^1(\Omega_P))} \nonumber \\
& \qquad
  + \sum_{j=1}^{d-1}\|(\bu - \partial_t\bd)\cdot\bt^j\|_{L^\infty(0,\tfinal;L^2(\Sigma))}  
+ \|\partial_t p_{P}\|_{L^2(0,\tfinal;L^2(\Omega_P))}
+ \|p_{P}\|_{L^\infty(0,\tfinal;H^1(\Omega_P))} \nonumber \\
  & \qquad
+ \|\varphi\|_{L^\infty(0,\tfinal;L^2(\Omega_P))}
+ \frac{1}{\sqrt{\lambda}}\|\alpha \, \partial_t p_{P} - \partial_t\varphi\|_{L^2(0,\tfinal;L^2(\Omega_P))}
\nonumber \\
& \le C \left( \|\gg\|_{L^2(0,\tfinal;\bL^2(\Omega))} + \|\gg\cdot\nn\|_{L^2(0,\tfinal;L^2(\Sigma))}
+ \|\ff\|_{H^1(0,\tfinal;\bL^2(\Omega_P))} + \|p_{P,0}\|_{H^1(\Omega_P)}  \right),
\label{eq:stability-dt-weak}
\end{align}
\end{subequations}
with a constant $C$ independent of $\lambda_{\max}$.
\end{theorem}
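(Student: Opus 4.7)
The plan is to pass to the limit in the semi-discrete scheme \eqref{eq:method} as the discretisation parameter $h \to 0$, using the uniform stability bounds \eqref{eq:stability}--\eqref{eq:stability-dt} from Theorem~\ref{thm:well-posed}. Concretely, I would take a sequence of conforming finite element subspaces $(\bV_h,\rQ^F_h,\bW_h,\rQ^P_h,\rZ_h)$ whose union is dense in the corresponding continuous spaces, and assemble the discrete solutions $(\bu_h,p_{F,h},\bd_h,p_{P,h},\varphi_h)$ together with their associated initial data $(\bd_{h,0},\varphi_{h,0},p_{P,h,0})$.

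The first main step is to extract weak and weak-$\ast$ limits. The estimates \eqref{eq:stability} and \eqref{eq:stability-dt} give uniform bounds, independent of both $h$ and $\lambda_{\max}$, in the natural Bochner norms on the left-hand sides. By Banach--Alaoglu, a subsequence (not relabelled) converges weakly-$\ast$ to a limit $(\bu,p_F,\bd,p_P,\varphi)$ in the spaces appearing on the left of \eqref{eq:stability-dt-weak}; in particular $\bd,p_P,\varphi \in L^\infty(0,\tfinal;\cdot)$ with $\partial_t \bd \in L^2(0,\tfinal;\bH^1(\Omega_P))$ and $\partial_t p_P \in L^2(0,\tfinal;L^2(\Omega_P))$, so $\bd$ and $p_P$ have well-defined traces at $t=0$. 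The interface/trace terms such as $(\bu_h-\partial_t\bd_h)\cdot\bt^j|_\Sigma$ and $p_{P,h}|_\Sigma$ converge weakly in the respective $L^2$-in-time spaces by continuity of the trace operator. The initial data $\bd_0,\varphi_0,\bu_0,p_{F,0}$ for the continuous problem are defined as weak limits of $\bd_{h,0},\varphi_{h,0},\bu_{h,0},p_{F,h,0}$, which are uniformly bounded by $C\|p_{P,0}\|_{H^1(\Omega_P)}$ via the argument given in the proof of Theorem~\ref{thm:well-posed}.

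Next, I would pass to the limit in each equation of \eqref{eq:method}. Using test functions of the form $\bv_h = \Pi_h \bv$, etc., where $\Pi_h$ denotes suitable projections onto the finite element subspaces, and density of $\bigcup_h \bV_h$ in $\bH^1_\star(\Omega_F)$ (and analogous statements for the other spaces), the linearity and continuity of all bilinear forms in \eqref{eq:forms} ensures that every term passes to its continuous counterpart. Time derivatives are handled by first multiplying by a smooth scalar test function $\chi \in C_c^\infty(0,\tfinal)$ and integrating by parts in $t$, transferring the derivative onto $\chi$; the weak convergence of $\bd_h,\varphi_h,p_{P,h}$ then yields the limit, and we may recover the strong pointwise form because the limits are sufficiently regular in time. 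Recovery of the initial conditions uses that $\bd_h(0) \rightharpoonup \bd_0$ in $\bH^1_\star(\Omega_P)$ and, by the higher-regularity bound, $\bd \in C([0,\tfinal];\bL^2(\Omega_P))$, so evaluating \eqref{eq:method} against compactly supported time test functions and integrating by parts identifies the trace at $t=0$.

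For uniqueness, I would test the difference of two putative solutions against itself, as in the derivation of \eqref{eq:stab-3}, exploiting that the problem is linear and the data drop out; Gronwall applied to the resulting inequality forces the difference to vanish, and then the inf-sup conditions \eqref{inf-sup-stokes}--\eqref{inf-sup-elast} (which survive in the continuous limit) give uniqueness of $p_F$ and $\varphi$. Finally, the stability bounds \eqref{eq:stability-weak}--\eqref{eq:stability-dt-weak} follow directly by weak/weak-$\ast$ lower semicontinuity of norms applied to \eqref{eq:stability}--\eqref{eq:stability-dt}, with the constant $C$ inheriting its independence of $\lambda_{\max}$.

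I expect the main technical obstacle to be the interface term $\sum_j \|(\bu_h-\partial_t\bd_h)\cdot\bt^j\|_{L^2(0,\tfinal;L^2(\Sigma))}$ together with the BJS coupling $b_3^\Sigma(\bv_h,\partial_t\bd_h)$: we have only weak control of $\partial_t\bd_h$ in $L^2(0,\tfinal;\bH^1(\Omega_P))$ from \eqref{eq:stability-dt}, and passing to the limit in the product against $\bu_h$ (also only weak in $L^2(0,\tfinal;\bH^1(\Omega_F))$) requires care. This is resolved by observing that $b_3^\Sigma$ is bilinear and continuous on the Bochner spaces, so no strong convergence is needed; the tangential trace is a bounded linear operator from $L^2(0,\tfinal;\bH^1(\Omega_P))$ to $L^2(0,\tfinal;\bL^2(\Sigma))$, and weak convergence survives under continuous linear maps. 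With this observation in place, the rest of the limit passage is routine.
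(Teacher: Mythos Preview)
Your proposal is correct and follows essentially the same Galerkin--compactness approach as the paper: uniform bounds from Theorem~\ref{thm:well-posed}, extraction of weak(-$\ast$) subsequential limits, passage to the limit in the bilinear forms using their continuity and density of discrete test functions, construction of continuous initial data as weak limits of the discrete ones, and uniqueness/stability by repeating the energy argument (or, equivalently, by lower semicontinuity of norms). Your flagged concern about $b_3^\Sigma(\bv_h,\partial_t\bd_h)$ is in fact a non-issue, since in the limit passage the test function $\bv_h$ is fixed (or strongly convergent via $\Pi_h\bv \to \bv$), so only one argument is weakly convergent and the continuity of the bilinear form suffices---exactly as your own resolution observes.
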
  
\begin{proof}
From Theorem~\ref{thm:well-posed}, there exists an infinite sequence
$\{(\bu_h,p_{F,h},\bd_h,p_{P,h},\varphi_h)\}_{h>0}$ satisfying
\eqref{eq:semi-discrete} for each $h$ such that $\{\bu_h\}_{h>0}$ is
bounded in $L^2(0,\tfinal;\bH^1(\Omega_F))$, $\{p_{F,h}\}_{h>0}$ is
bounded in $L^2(0,\tfinal;\- L^2(\Omega_F))$, $\{\bd_h\}_{h>0}$ is
bounded in $H^1(0,\tfinal;\bH^1(\Omega_P))$, the sequence $\{p_{P,h}\}_{h>0}$ is
bounded in $H^1(0,\tfinal;L^2(\Omega_P))$, as well as in
$L^2(0,\tfinal;H^1(\Omega_P))$, and the sequence $\{\varphi_h\}_{h>0}$ is bounded
in $H^1(0,\tfinal;L^2(\Omega_P))$. Therefore there exist weakly
convergent subsequences, denoted in the same way, such that
\begin{gather*}
   \bu_h \rightharpoonup \bu \text{ in } L^2(0,\tfinal;\bH^1(\Omega_F)), \quad
  p_{F,h} \rightharpoonup p_F \text{ in } L^2(0,\tfinal;L^2(\Omega_F)), \\
   \bd_h \rightharpoonup \bd \text{ in } H^1(0,\tfinal;\bH^1(\Omega_P)), \\
    p_{P,h} \rightharpoonup p_P \text{ in }
  H^1(0,\tfinal;L^2(\Omega_P)) \cap L^2(0,\tfinal;H^1(\Omega_P)), \quad
  \varphi_h \rightharpoonup \varphi \text{ in } H^1(0,\tfinal;L^2(\Omega_P)).
\end{gather*}
Next, we fix a set of test functions $(\bv_h, q_{F,h}, \bw_h, q_{P,h}, \psi_h) \in
C^0(0,\tfinal;\bV_h\times \rQ^F_h\times\bW_h\times\rQ^P_h\times\rZ_h)$ in \eqref{eq:method}, integrate it in time from 0 to $\tfinal$, and take $h \to 0$.
Since all bilinear forms and linear functionals are continuous, cf. Lemma~\ref{lem:cont}, we conclude that
$(\bu,p_{F},\bd,p_{P},\varphi)$ satisfy the time-integrated version of \eqref{eq:weak} with this choice of test functions. Finally, since the space
$C^0(0,\tfinal;\bV_h\times \rQ^F_h\times\bW_h\times\rQ^P_h\times\rZ_h)$ is dense in
$L^2(0,\tfinal;\bH^1_{\star}(\Omega_F)\times L^2(\Omega_F)\times \bH^1_{\star}(\Omega_P) \times
H^1_{\star}(\Omega_P) \times L^2(\Omega_P))$, we conclude that \eqref{eq:weak} holds for a.e.
$t \in (0,\tfinal)$. 

It remains to handle the initial conditions. First, taking $h \to 0$
in $p_{P,h}(0) = p_{P,h,0}$ gives $p_P(0) = p_{P,0}$. We further note
that the control of the terms
$\|(\bu_h - \partial_t\bd_h)\cdot\bt^j\|_{L^\infty(0,\tfinal;L^2(\Sigma))}$
and $\|\bu_h\|_{L^\infty(0,\tfinal;\bH^1(\Omega_F))}$
in \eqref{eq:stability-dt} implies that for all $t \in [0,\tfinal]$,
$\partial_t \bd_h(t)\cdot\bt^j \rightharpoonup \partial_t \bd(t)\cdot\bt^j$ in $L^2(\Sigma)$.
Taking $t = 0$ and $h \to 0$ in \eqref{eq:theta} and using that $\partial_t \bd_h(0)\cdot\bt^j = 0$ on $\Sigma$,
we conclude that $\partial_t \bd(0)\cdot\bt^j = 0$ on $\Sigma$. Next, the stability of the Stokes and elasticity problems for the initial data in the proof of
Theorem~\ref{thm:well-posed} implies that there exist weakly convergent subsequences such that 
\begin{gather*}
   \bu_{h,0} \rightharpoonup \bu_0 \text{ in } \bH^1(\Omega_F), \ \
  p_{F,h,0} \rightharpoonup p_{F,0} \text{ in } L^2(\Omega_F), \ \
   \bd_{h,0} \rightharpoonup \bd_0 \text{ in } \bH^1(\Omega_P), \ \
  \varphi_{h,0} \rightharpoonup \varphi_0 \text{ in } L^2(\Omega_P).
\end{gather*}
Then, taking $t \to 0$ in \eqref{weak-1}--\eqref{weak-2},
\eqref{weak-3}, and \eqref{weak-5} and using that the initial data
satisfies the same equations, we conclude that $\bu(0) = \bu_{0}$,
$p_{F}(0) = p_{F,0}$, $\bd(0) = \bd_{0}$, and $\varphi(0) =
\varphi_{0}$.

Finally, the uniqueness of the solution under the initial conditions
$p_{P}(0) = p_{P,0}$, $\bd(0) = \bd_{0}$, $\varphi(0) = \varphi_{0}$, and
the stability bounds \eqref{eq:stability-weak} and \eqref{eq:stability-dt-weak},
follow in the same way as in the proof of Theorem~\ref{thm:well-posed}.
\end{proof}

\section{Fully discrete scheme
}\label{sec:error}
We apply a time discretisation to \eqref{eq:method}
using backward Euler's method with fixed time step $\Delta t = \tfinal/N$.
Let $t_n = n \Delta t$, $n = 0,\ldots,N$, be the discrete times.
Starting from the discrete initial data constructed in the proof of Theorem~\ref{thm:well-posed}, at each time iteration $n=1,\ldots,N$ we look for $(\bu^{n}_h,p^{n}_{F,h},\bd^{n}_h,p^{n}_{P,h},\varphi^{n}_h)\in \bV_h\times \rQ^F_h\times\bW_h\times\rQ^P_h\times\rZ_h=:\mathbf{H}_h$ such that 
\begin{subequations}\label{eq:scheme} 
\begin{align}
  a_2^F(\bu_h^n,\bv_h) + b^F_1(\bv_h,p_{F,h}^n) 
  + b^\Sigma_2(\bv_h,p_{P,h}^n) + b_3^\Sigma(\bv_h,\partial_t^n\bd_h)
 & = F^{F,n}(\bv_h) & \! \forall \bv_h \in \bV_h, \label{scheme-1}\\
 - b^F_1(\bu_h^n,q_{F,h}) &= 0 & \! \forall q_{F,h}\in \rQ_h^F, \label{scheme-2} \\
 b_3^\Sigma(\bu_h^n,\bw_h)  +  b_4^\Sigma(\bw_h,p_{P,h}^n) + a_1^P(\bd_h^n,\bw_h) &\nonumber \\
 + a_2^\Sigma(\partial_t^n\bd_h,\bw_h)
+ b_1^P(\bw_h,\varphi_h^n) & = F^P(\bw_h) & \! \forall \bw_h \in \bW_h, \label{scheme-3} \\
 - b_2^\Sigma(\bu_h^n,q_{P,h})  - b_4^\Sigma(\partial_t^n \bd_h,q_{P,h})
+ a_3^P(\partial_t^n p_{P,h},q_{P,h})& \nonumber \\
 + a_4^P(p_{P,h}^n,q_{P,h}) 
- b_2^P(\partial_t^n \varphi_h,q_{P,h}) &= G^n(q_{P,h}) & \! \forall q_{P,h} \in \rQ_h^P,
\label{scheme-4}\\
 - b_1^P(\bd_h^n,\psi_h) -  b_2^P(\psi_h,p_{P,h}^n) + a_5^P(\varphi_h^n,\psi_h)& = 0 &
\! \forall \psi_h \in \rZ_h, \label{scheme-5}
\end{align}\end{subequations}
where, for a generic scalar or vector field $f$, we set $f^n:= f(t_n)$ and $\partial_t^n f:= \frac{1}{\Delta t}(f^n - f^{n-1})$. 
For convenience we also define the global discrete time derivative $\tilde\partial_t f$ such that $(\tilde\partial_t f)^n := \partial_t^n f$ for
$n = 1,\ldots,N$. 
The method requires solving at each time step the algebraic system
\begin{equation}\label{eq:fully-discrete}
\left[\begin{array}{cc:ccc}
\cA_2^F & (\cB_1^F)' & \frac{1}{\Delta t}(\cB_3^\Sigma)' &  (\cB_2^\Sigma)' & \cero \\[1ex]
-\cB_1^F & \cero & \cero & \cero &\cero \\
\hdashline
\cB_3^\Sigma & \cero& \cA_1^P + \frac{1}{\Delta t}\cA_2^\Sigma & (\cB_4^\Sigma)' & (\cB_1^P)' \vphantom{\int^{X^X}}\\[1ex]
-\cB_2^\Sigma & \cero & -\frac{1}{\Delta t}\cB_4^\Sigma  &\frac{1}{\Delta t}\cA_3^P + \cA_4^P & -\frac{1}{\Delta t}(\cB_2^P)' \\[1ex]
\cero & \cero & -\cB_1^P & -\cB_2^P & \cA_5^P 
  \end{array}\right]
\left[\begin{array}{c}
\bu^{n}_h\\[1ex]
p^{n}_{F,h} \\[1ex]
\bd^{n}_h\\[1ex]
p^{n}_{P,h}\\[1ex]
\varphi^{n}_h
\end{array}\right]=
\left[\begin{array}{c}
\tilde\cF^{F,n} \\[1ex]  
\cero \\[1ex] 
\tilde\cF^{P,n} \\[1ex] 
\tilde\cG^{n} \\[1ex] 
\cero \end{array}
\right],
\end{equation}
where the tilde-notation on the right-hand side
vectors indicate that they also receive contributions from the backward
Euler time-discretisation.

\begin{theorem}\label{well-posed-discrete}
The fully discrete method \eqref{eq:fully-discrete} has a unique solution. 
  \end{theorem}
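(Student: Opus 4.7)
The plan is to observe that \eqref{eq:fully-discrete} is a square linear algebraic system at each time step, so it suffices to prove that the homogeneous version has only the trivial solution. This is where I would invest all the work: an energy-type argument essentially parallel to the proof of Theorem~\ref{thm:well-posed}.

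First, I fix $n$ and consider the homogeneous system, i.e., zero right-hand sides and vanishing contributions from the previous time level, so that $\partial_t^n \bd_h = \bd_h^n/\Delta t$, $\partial_t^n p_{P,h} = p_{P,h}^n/\Delta t$ and $\partial_t^n \varphi_h = \varphi_h^n/\Delta t$. Following the test function choice used in \eqref{eq:stab-1}, I would test \eqref{scheme-1}--\eqref{scheme-5} with $(\bu_h^n, p_{F,h}^n, \partial_t^n\bd_h, p_{P,h}^n, \partial_t^n\varphi_h)$ and add the resulting identities. The cross-terms $b_1^F$, $b_2^\Sigma$, $b_4^\Sigma$ and $b_1^P$ cancel pairwise after this choice. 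The tangential terms combine via the symmetry of $b_3^\Sigma$ and $a_2^\Sigma$ to produce the BJS coercive quantity
\begin{equation*}
\sum_{j=1}^{d-1} \left\langle \frac{\gamma\mu_f}{\sqrt{\kappa}} (\bu_h^n - \partial_t^n\bd_h)\cdot\bt^j, (\bu_h^n - \partial_t^n\bd_h)\cdot\bt^j \right\rangle_\Sigma,
\end{equation*}
exactly as in the continuous energy estimate. Likewise, the algebraic $\varphi$--$p_P$ coupling in $a_3^P$, $b_2^P$ and $a_5^P$ recombines into the $\lambda$-robust term $\frac{1}{\lambda\Delta t}\|\alpha p_{P,h}^n - \varphi_h^n\|_{L^2(\Omega_P)}^2$.

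After these cancellations I would be left with a sum of nonnegative quantities equal to zero, namely
\begin{equation*}
2\mu_f\|\beps(\bu_h^n)\|^2_{\bL^2(\Omega_F)} + \frac{2\mu_s}{\Delta t}\|\beps(\bd_h^n)\|^2_{\bL^2(\Omega_P)} + \frac{C_0}{\Delta t}\|p_{P,h}^n\|_{L^2(\Omega_P)}^2 + \left\|\tfrac{\kappa^{1/2}}{\mu_f^{1/2}}\nabla p_{P,h}^n\right\|_{L^2(\Omega_P)}^2 + \frac{1}{\lambda\Delta t}\|\alpha p_{P,h}^n - \varphi_h^n\|_{L^2(\Omega_P)}^2 = 0,
\end{equation*}
together with the BJS and tangential contributions above. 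Invoking Korn's inequality \eqref{Korn}, the Poincaré inequality \eqref{Poincare}, and the permeability bound \eqref{perm}, I conclude $\bu_h^n = \cero$, $\bd_h^n = \cero$, $p_{P,h}^n = 0$, and then $\varphi_h^n = 0$ from the last term.

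Finally, having $\bu_h^n = \cero$, $\bd_h^n = \cero$, $p_{P,h}^n = 0$, the homogeneous \eqref{scheme-1} reduces to $b_1^F(\bv_h, p_{F,h}^n) = 0$ for all $\bv_h \in \bV_h$, so the Stokes inf-sup condition \eqref{inf-sup-stokes} forces $p_{F,h}^n = 0$. This establishes injectivity of the block matrix in \eqref{eq:fully-discrete}, hence unique solvability of the fully discrete scheme at each time step, and the argument proceeds by induction on $n$. I expect the only delicate point to be the bookkeeping of the factors of $1/\Delta t$ in the cross-terms and ensuring that the BJS combination closes properly; once that is organised exactly as in \eqref{eq:stab-1}, coercivity follows immediately.
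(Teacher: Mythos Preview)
Your argument is correct and is, in substance, the same energy argument that the paper relies on. The only difference is presentational: the paper observes that the block matrix in \eqref{eq:fully-discrete} is precisely the pencil $\tfrac{1}{\Delta t}\cN + \cM$ (equivalently, after scaling the third and fifth rows by $1/\Delta t$ it has the same structure as $\cN + \cM$), and then simply invokes the nonsingularity of that pencil already proved in Theorem~\ref{thm:well-posed}; you instead redo that energy computation from scratch with the test functions $(\bu_h^n, p_{F,h}^n, \partial_t^n\bd_h, p_{P,h}^n, \partial_t^n\varphi_h)$, which amounts to the same thing.
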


\begin{proof}
  Consider the matrix obtained from the matrix in \eqref{eq:fully-discrete} by scaling the third and fifth rows by $\frac{1}{\Delta t}$. It has the same structure as the matrix $\cN + \cM$ that appears in the proof of Theorem~\ref{thm:well-posed}, which is shown to be nonsingular. Therefore the scaled matrix is nonsingular, and so is the matrix in \eqref{eq:fully-discrete}.
  \end{proof}

\subsection{Error estimates for the fully discrete scheme}\label{sec:error-fully}
We proceed with the error analysis for the fully discrete scheme. We will make use of the discrete space-time norms for $f: t_n \to \rV$, $n = 1,\ldots,N$,
$$
\|f\|^2_{l^2(0,\tfinal;\rV)} := \sum_{n=1}^N \Delta t \|f^n\|_{\rV}^2, \quad
\|f\|_{l^\infty(0,\tfinal;\rV)} := \max\limits_{n=1,\ldots,N} \|f^n\|_{\rV},
$$

\cred{In addition, we consider finite-dimensional subspaces of continuous and piecewise polynomials of generic degrees 
\begin{equation}\label{eq:degrees}
k_{\bv},k_{\bw},k_{q_P}\geq 1,\quad k_{q_F},k_{z} \geq 0, \quad \text{
for the spaces} \quad \bV_h, \bW_h, \rQ_h^P, \rQ_h^F, \rZ_h,\end{equation} 
respectively.} 

Let $I^\bV:\bH^1_\star(\Omega_F) \to \bV_h$, $I^\bW:\bH^1_\star(\Omega_P) \to \bW_h$, and $I^{\rQ^P}: H^1_\star(\Omega_P) \to \rQ_h^P$ be the Scott-Zhang interpolants \cite{Scott-Zhang}. In addition, let $I^{\rQ^F}:L^2(\Omega_F) \to \rQ_h^F$ and $I^\rZ:L^2(\Omega_P) \to \rZ_h$ be the $L^2$-orthogonal projections. These operators, {alongside the polynomial degrees \eqref{eq:degrees}} have the approximation properties (see, e.g., \cite{Scott-Zhang,Ciarlet}) 
\cred{\begin{subequations}\begin{align}
  & \|\bv - I^\bV \bv\|_{\bH^s(\Omega_F)} \le C h^{r_{\bv} - s}\|\bv\|_{\bH^{r_{\bv}}(\Omega_F)}, 
  && 1 \le r_{\bv} \le k_{\bv}+1, \ 0 \le s \le 1, \label{IV-approx}\\
  & \|q_F - I^{\rQ^F} q_F\|_{L^2(\Omega_P)} \le C h^{r_{q_F}}\|q_F\|_{H^{r_{q_F}}(\Omega_F)}, 
  && 0 \le r_{q_F} \le k_{q_F}+1, \label{IQF-approx}\\
  & \|\bw - I^\bW \bw\|_{\bH^s(\Omega_P)} \le C h^{r_{\bw} - s}\|\bw\|_{\bH^{r_{\bw}}(\Omega_P)}, 
  && 1 \le r_{\bw} \le k_{\bw}+1, \ 0 \le s \le 1, \label{IW-approx}\\
  & \|q_P - I^{\rQ^P} q_P\|_{H^s(\Omega_P)} \le C h^{r_{q_P} - s}\|q_P\|_{H^{r_{q_P}}(\Omega_P)}, 
  && 1 \le r_{q_P} \le k_{q_P}+1, \ 0 \le s \le 1, \label{IQP-approx}\\
  & \|\psi - I^{\rZ} \psi\|_{L^2(\Omega_P)} \le C h^{r_z}\|\psi\|_{H^{r_z}(\Omega_P)}, 
  && 0 \le r_{z} \le k_{z}+1. \label{IZ-approx}
\end{align}\end{subequations}}

\begin{theorem}\label{thm:error-bound}
Assume that the weak solution of \eqref{eq:weak} is sufficiently smooth. Then, for the fully discrete solution \eqref{eq:scheme}, there exists a constant $C$ independent of $\lambda_{\max}$, $h$, and $\Delta t$, such \cred{that
\begin{align}
  & \|\bu - \bu_h\|_{l^2(0,\tfinal;\bH^1(\Omega_F))}
  + \|p_{F} - p_{F,h}\|_{l^2(0,\tfinal;L^2(\Omega_F))}
  + \|\bd - \bd_h\|_{l^\infty(0,\tfinal;\bH^1(\Omega_P))}
  \nonumber \\
& \qquad 
  + \sum_{j=1}^{d-1}\|(\bu - \tilde\partial_t\bd_h)\cdot\bt^j\|_{l^2(0,\tfinal;L^2(\Sigma))}
+ \|p_{P} - p_{P,h}\|_{l^\infty(0,\tfinal;L^2(\Omega_P))}
+ \|p_{P} - p_{P,h}\|_{l^2(0,\tfinal;H^1(\Omega_P))} \nonumber \\
& \qquad 
+ \|\varphi - \varphi_h\|_{l^2(0,\tfinal;L^2(\Omega_P))} 
+ \frac{1}{\sqrt{\lambda}}\|(\alpha \, p_{P} - \varphi) - (\alpha \, p_{P,h} - \varphi_h)\|_{l^\infty(0,\tfinal;L^2(\Omega_P))}
\nonumber \\
& \quad \le C \Big( h^{r_{\bv}}\|\bu\|_{H^1(0,\tfinal;\bH^{r_{\bv}+1}(\Omega_F))}
+ h^{r_{q_F}}\|p_F\|_{H^1(0,\tfinal;H^{r_{q_F}}(\Omega_F))}
+ h^{r_{\bw}}\|\bd\|_{W^{2,\infty}(0,\tfinal;\bH^{r_{\bw}+1}(\Omega_P))} \nonumber \\
& \qquad\qquad + h^{r_{q_P}}\|p_P\|_{H^1(0,\tfinal;H^{r_{q_P}+1}(\Omega_P))}
+ h^{r_z}\|\varphi\|_{H^1(0,\tfinal;H^{r_z}(\Omega_P))}
\nonumber \\
& \qquad\qquad + \Delta t \big( \|\bd\|_{H^3(0,T;\bL^2(\Sigma))} + \|p_P\|_{H^2(0,T;L^2(\Omega_P)}
+ \|\varphi\|_{H^2(0,T;L^2(\Omega_P)}\big)
\Big), 
\label{eq:error-bound}
\end{align}
with $1 \le r_{\bv} \le k_{\bv} $, $0 \le r_{q_F} \le k_{q_F}+1$, $1 \le r_{\bw} \le k_{\bw}$, $1 \le r_{q_P} \le k_{q_P}$, and $0 \le r_{z} \le k_{z} +1$.} 

\end{theorem}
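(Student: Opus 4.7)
The plan is to follow the classical Galerkin error analysis recipe adapted to this coupled system, closely mirroring the structure of the stability proof of Theorem~\ref{thm:well-posed}. For each unknown $f\in\{\bu,p_F,\bd,p_P,\varphi\}$ with corresponding interpolant $If$ chosen from \eqref{IV-approx}--\eqref{IZ-approx}, I would introduce the splitting $f^n-f_h^n=(f^n-If^n)+(If^n-f_h^n)=:\rho_f^n+\theta_f^n$. The $\rho$ components are controlled directly by \eqref{IV-approx}--\eqref{IZ-approx}. Next, I would write the continuous system \eqref{eq:weak} at $t=t_n$, subtract the fully discrete scheme \eqref{eq:scheme}, and rearrange so that the LHS contains the discrete bilinear forms applied to $\theta$ and the RHS contains (i) the projection residuals involving $\rho$ and $\partial_t^n\rho$, and (ii) the backward-Euler truncation residuals $\tau_f^n:=\partial_t f(t_n)-\partial_t^n f$, which satisfy $\|\tau_f^n\|\le C\Delta t\,\|\partial_{tt}f\|_{L^2(t_{n-1},t_n;\cdot)}/\sqrt{\Delta t}$ by Taylor expansion. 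A minor technicality is that the projections $I^{\rQ^F}$ and $I^\rZ$ are $L^2$-orthogonal, so that the $\rho_{p_F}$ and $\rho_\varphi$ contributions in $b_1^F$ and $b_1^P$ vanish against discrete divergence-containing tests.

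In parallel with \eqref{eq:stab-1}, I would then pick the test functions $(\bv_h,q_{F,h},\bw_h,q_{P,h},\psi_h)=(\theta_\bu^n,\theta_{p_F}^n,\partial_t^n\theta_\bd,\theta_{p_P}^n,\theta_\varphi^n)$, after first applying the discrete time derivative $\partial_t^n$ to the $\theta$-version of \eqref{scheme-5} (as was done with \eqref{weak-5} in the continuous stability argument). Summing the five equations produces, on the LHS, the by-now-familiar coercive quantities
\begin{equation*}
2\mu_f\|\beps(\theta_\bu^n)\|^2_{\bL^2(\Omega_F)}+\sum_{j=1}^{d-1}\|\tfrac{\gamma\mu_f}{\sqrt\kappa}(\theta_\bu^n-\partial_t^n\theta_\bd)\cdot\bt^j\|^2_{\bL^2(\Sigma)}+\tfrac{\mu_s}{\Delta t}\bigl(\|\beps(\theta_\bd^n)\|^2-\|\beps(\theta_\bd^{n-1})\|^2\bigr)+\cdots
\end{equation*}
(with the usual telescoping terms from the backward-Euler identity $(a^n-a^{n-1})a^n\ge\tfrac12((a^n)^2-(a^{n-1})^2)$ applied to $C_0\|\theta_{p_P}\|^2$, $\kappa\|\nabla\theta_{p_P}\|^2$, and $\lambda^{-1}\|\alpha\theta_{p_P}-\theta_\varphi\|^2$). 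The RHS reduces to controllable duality pairings against $\rho$-terms, $\partial_t^n\rho$-terms, and truncation residuals; each is handled by Cauchy--Schwarz, Korn \eqref{Korn}, Poincar\'e \eqref{Poincare}, the trace inequality \eqref{trace}, the permeability bound \eqref{perm}, and Young's inequality, exactly as in the passage from \eqref{eq:stab-2} to \eqref{eq:stab-3}. After multiplying by $\Delta t$ and summing over $n=1,\ldots,N$, a discrete Gronwall argument yields a bound on the $\ell^\infty(\bH^1)$ norms of $\theta_\bd$, on $\lambda^{-1/2}\|\alpha\theta_{p_P}-\theta_\varphi\|_{\ell^\infty(L^2)}$, on $\|\theta_\bu\|_{\ell^2(\bH^1)}$, $\|\theta_{p_P}\|_{\ell^2(H^1)}$, and the BJS tangential jump.

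To close the estimate for $\theta_{p_F}$ and $\theta_\varphi$ I would, as in \eqref{pF-bound} and \eqref{varphi-bound}, invoke the discrete inf-sup conditions \eqref{inf-sup-stokes} and \eqref{inf-sup-elast}: testing the first and third equations of the $\theta$-system with the inf-sup supremisers expresses these two error components in terms of the already-bounded quantities plus projection residuals. A triangle inequality combining the $\theta$-bounds with \eqref{IV-approx}--\eqref{IZ-approx} and the Taylor-estimates for $\tau_f$ then yields \eqref{eq:error-bound}. The main obstacle I anticipate is the careful bookkeeping required to keep all constants independent of $\lambda_{\max}$: the interface terms $b_2^\Sigma$, $b_4^\Sigma$ that couple $\bu$ and $\partial_t\bd$ to $p_P$ must be estimated without ever producing a factor of $\lambda$, which means using only the combination $\lambda^{-1/2}(\alpha\theta_{p_P}-\theta_\varphi)$ rather than $\theta_\varphi$ individually, and leveraging the Stokes/elasticity inf-sup bounds for $\theta_{p_F}$ and $\theta_\varphi$ only at the very end. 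A secondary difficulty is the appearance of $\partial_t^n\rho_\bd$ in the $b_3^\Sigma$ residual, whose $L^2(\Sigma)$ norm is controlled by the standard identity $\partial_t^n\rho_\bd=\frac{1}{\Delta t}\int_{t_{n-1}}^{t_n}\partial_t\rho_\bd\,\dt$ together with the trace inequality, producing the $\|\bd\|_{W^{2,\infty}(\bH^{r_\bw+1})}$ contribution in \eqref{eq:error-bound}.
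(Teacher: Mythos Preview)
Your plan follows the paper's overall architecture, but there is a genuine gap at the step where you bound the right-hand side of the energy identity. When equation \eqref{error-3} is tested with $\bw_h=\partial_t^n\theta_\bd$, the right-hand side contains terms such as $a_1^P(\rho_\bd^n,\partial_t^n\theta_\bd)$, $b_4^\Sigma(\partial_t^n\theta_\bd,\rho_{p_P}^n)$, and $b_1^P(\partial_t^n\theta_\bd,\rho_\varphi^n)$, all of which see $\partial_t^n\theta_\bd$ through the full $\bH^1(\Omega_P)$-norm or its normal trace on $\Sigma$. After the backward-Euler identity the left-hand side only provides $\mu_s\Delta t\,\|\beps(\partial_t^n\theta_\bd)\|^2$; once multiplied by $\Delta t$ and summed this is $O(\Delta t^2)$ and cannot absorb the $O(\Delta t)$ contribution that a direct Cauchy--Schwarz--Young bound would produce (taking the Young parameter $\epsilon\sim\Delta t$ makes the data constant blow up). The passage from \eqref{eq:stab-2} to \eqref{eq:stab-3} that you invoke occurs \emph{after} the analogous integration-by-parts-in-time has already been performed in \eqref{eq:stab-1}--\eqref{eq:stab-2}, and there only a single term $\int\ff\cdot\partial_t\bd_h$ was involved; here the collection of such terms is much larger.

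The paper closes this via a \emph{discrete summation by parts}: for every $\phi^n$ paired with $\partial_t^n\theta_\bd$ one writes
\[
\Delta t\sum_{n=1}^k \phi^n\,\partial_t^n\theta_\bd
= \phi^k\theta_\bd^k-\phi^1\theta_\bd^0-\Delta t\sum_{n=2}^k(\partial_t^n\phi)\,\theta_\bd^{n-1},
\]
shifting the discrete time derivative onto the data. The resulting $\Delta t\sum_{n}\|\theta_\bd^{n-1}\|_{\bH^1}^2$ is exactly what feeds the discrete Gronwall step. This mechanism is also what generates the second discrete differences $\partial_t^n\partial_t^n\rho_\bd$ and $\partial_t^n r_\bd$, which are what force the $W^{2,\infty}(0,T;\bH^{r_\bw+1})$ regularity on $\bd$ and the $H^3(0,T;\bL^2(\Sigma))$ term in \eqref{eq:error-bound}; your attribution of the $W^{2,\infty}$ norm to $\partial_t^n\rho_\bd$ alone is therefore off (that term needs only $H^1$ in time). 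A smaller point: the $L^2$-orthogonality of $I^{\rQ^F}$ and $I^\rZ$ does \emph{not} kill the $b_1^F$ and $b_1^P$ residuals unless $\vdiv\bV_h\subset\rQ^F_h$ and $\vdiv\bW_h\subset\rZ_h$, which fails for Taylor--Hood; the paper only uses that orthogonality to drop $a_5^P(\partial_t^n e_{\varphi,I},\psi_h)$, and otherwise carries those terms along.
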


\begin{proof}
We decompose the numerical errors into approximation and discretisation components:
\begin{align*}
& \bu - \bu_h = (\bu - I^\bV\bu) + (I^\bV\bu - \bu_h) =: e_{\bu,I} + e_{\bu,h},\\
& p_F - p_{F,h} = (p_F - I^{\rQ^F}p_F) + (I^{\rQ^F}p_F - p_{F,h}) =: e_{p_F,I} + e_{p_F,h},\\
& \bd - \bd_h = (\bd - I^\bW\bd) + (I^\bW\bd - \bd_h) =: e_{\bd,I} + e_{\bd,h},\\
& p_P - p_{P,h} = (p_P - I^{\rQ^P}p_P) + (I^{\rQ^P}p_P - p_{P,h}) =: e_{p_P,I} + e_{p_P,h},\\
& \varphi - \varphi_h = (\varphi - I^\rZ\varphi) + (I^\rZ\varphi - \varphi_h) =: e_{\varphi,I} + e_{\varphi,h}.
\end{align*}
Denote the time discretisation errors as $r_\phi^n := \phi(t_n) - \partial_t^n \phi$,
for $\phi \in \{\bd, p_P, \varphi \}$. 
Subtracting \eqref{eq:scheme} from \eqref{eq:weak}, we obtain the error system
\begin{subequations}\label{eq:error} 
\begin{align}
&  a_2^F(e_{\bu,h}^n,\bv_h) + b^F_1(\bv_h,e_{p_F,h}^n) 
  + b^\Sigma_2(\bv_h,e_{p_P,h}^n) + b_3^\Sigma(\bv_h,\partial_t^n e_{\bd,h}) \nonumber \\
  & \quad = -a_2^F(e_{\bu,I}^n,\bv_h) - b^F_1(\bv_h,e_{p_F,I}^n) 
  - b^\Sigma_2(\bv_h,e_{p_P,I}^n) - b_3^\Sigma(\bv_h,\partial_t^n e_{\bd,I})
  - b_3^\Sigma(\bv_h,r_{\bd}^n), \label{error-1}\\
& - b^F_1(e_{\bu,h}^n,q_{F,h}) = b^F_1(e_{\bu,I}^n,q_{F,h})  , \label{error-2} \\
  & b_3^\Sigma(e_{\bu,h}^n,\bw_h)  +  b_4^\Sigma(\bw_h,e_{p_P,h}^n) + a_1^P(e_{\bd,h}^n,\bw_h) + a_2^\Sigma(\partial_t^n e_{\bd,h},\bw_h) + b_1^P(\bw_h,e_{\varphi,h}^n) =
  - b_3^\Sigma(e_{\bu,I}^n,\bw_h)  \nonumber \\
  & \quad -  b_4^\Sigma(\bw_h,e_{p_P,I}^n)
  - a_1^P(e_{\bd,I}^n,\bw_h) - a_2^\Sigma(\partial_t^n e_{\bd,I},\bw_h)
  - b_1^P(\bw_h,e_{\varphi,I}^n) - a_2^\Sigma(r_{\bd}^n,\bw_h), \label{error-3} \\
& - b_2^\Sigma(e_{\bu,h}^n,q_{P,h})  - b_4^\Sigma(\partial_t^n e_{\bd,h},q_{P,h})
+ a_3^P(\partial_t^n e_{p_P,h},q_{P,h}) 
+ a_4^P(e_{p_P,h}^n,q_{P,h}) 
- b_2^P(\partial_t^n e_{\varphi,h},q_{P,h}) \nonumber \\
& \quad = b_2^\Sigma(e_{\bu,I}^n,q_{P,h})  + b_4^\Sigma(\partial_t^n e_{\bd,I},q_{P,h})
- a_3^P(\partial_t^n e_{p_P,I},q_{P,h}) 
- a_4^P(e_{p_P,I}^n,q_{P,h}) \nonumber \\
& \quad + b_2^P(\partial_t^n e_{\varphi,I},q_{P,h})
- a_3^P(r_{p_P}^n,q_{P,h}) - b_2^P(r_{\varphi}^n,q_{P,h}),
\label{error-4}\\
& - b_1^P(\partial_t^n e_{\bd,h},\psi_h) -  b_2^P(\psi_h,\partial_t^n e_{p_P,h})
+ a_5^P(\partial_t^n e_{\varphi,h},\psi_h) =
b_1^P(\partial_t^n e_{\bd,I},\psi_h) +  b_2^P(\psi_h,\partial_t^n e_{p_P,I}). \label{error-5}
\end{align}\end{subequations}
Equation \eqref{error-5} has been obtained by taking the divided difference of the error equation at $t_n$ and $t_{n-1}$ for $n = 1,\ldots,N$, using that it is satisfied by the initial data. We also used the orthogonality property of $I^\rZ$ to conclude that $a_5^P(\partial_t^n e_{\varphi,I},\psi_h) = 0$. Now, taking
$(\bv_h, q_{F,h}, \bw_h,  q_{P,h},   \psi_h)
= (e_{\bu,h}^n, e_{p_F,h}^n, \partial_t^n e_{\bd,h}, e_{p_P,h}^n, e_{\varphi,h}^n)$ in \eqref{eq:error}, summing the equations, and using the identity
\begin{equation*}
  (\partial_t^n \phi)  \phi^n = \frac12 \partial_t^n (\phi^2)
  + \frac12 \Delta t (\partial_t^n \phi)^2,
\end{equation*}
results in
\begin{equation}\begin{split}\label{discr-error-1}
  & 2\mu_f\!\int_{\Omega_F}\! \!\beps(e_{\bu,h}^n):\beps(e_{\bu,h}^n)
  + \sum_{j=1}^{d-1}\langle  \frac{\gamma\mu_f}{\sqrt{\kappa}}
  (e_{\bu,h}^n - \partial_t^n e_{\bd,h}) \cdot\bt^j\,,(e_{\bu,h}^n - \partial_t^n e_{\bd,h}) \cdot\bt^j\rangle_\Sigma \\
  & \qquad
  + \frac12\partial_t^n 2\mu_s\int_{\Omega_P} \beps(e_{\bd,h}):\beps(e_{\bd,h})
  + \frac12 \Delta t \, 2\mu_s\int_{\Omega_P} \partial_t^n\beps(e_{\bd,h}):\partial_t^n\beps(e_{\bd,h}) \\
  & \qquad  
  + \frac12 \partial_t^n C_0 \int_{\Omega_P} (e_{p_P,h})^2
  + \frac12 \Delta t C_0 \int_{\Omega_P} (\partial_t^n e_{p_P,h})^2  
  + \int_{\Omega_P} \frac{\kappa}{\mu_f}\nabla e_{p_P,h}^n \cdot\nabla e_{p_P,h}^n \\
  & \qquad    
  + \frac12\partial_t^n
  \frac{1}{\lambda} \int_{\Omega_P}(\alpha \, e_{p_P,h} - e_{\varphi,h})^2
  + \frac12 \Delta t
  \frac{1}{\lambda} \int_{\Omega_P}
  \left(\partial_t^n(\alpha \, e_{p_P,h} - e_{\varphi,h})\right)^2
  = \cL^n,
  \end{split}
\end{equation}
where $\cL^n$ is the collection of terms that appear on the right-hand sides in \eqref{eq:error}. Using the continuity of the bilinear forms 
(cf. Lemma~\ref{lem:cont}) in combination with Young's inequality, we have
\begin{equation}\begin{split}\label{discr-error-2}
    \cL^n & \le \epsilon\big(\|e_{\bu,h}^n\|_{\bH^1(\Omega_F)}^2
    + \|e_{p_F,h}^n\|_{L^2(\Omega_F)}^2 + \|e_{p_P,h}^n\|_{H^1(\Omega_P)}^2
    + \|e_{\varphi,h}^n\|_{L^2(\Omega_P)}^2\big)
    \\
    & \quad
    + C_\epsilon\big(\|e_{\bu,I}^n\|_{\bH^1(\Omega_F)}^2
    + \|e_{p_F,I}^n\|_{L^2(\Omega_F)}^2 + \|e_{p_P,I}^n\|_{H^1(\Omega_P)}^2
    + \|\partial_t^n e_{\bd,I}\|_{\bH^1(\Omega_P)}^2
    + \|\partial_t^n e_{p_P,I}\|_{H^1(\Omega_P)}^2
    \\
    & \quad
    + \|\partial_t^n e_{\varphi,I}\|_{L^2(\Omega_P)}^2
    + \|r_{\bd}^n\|_{\bL^2(\Sigma)}^2
    + \|r_{p_P}^n\|_{L^2(\Omega_P)}^2 + \|r_{\varphi}^n\|_{L^2(\Omega_P)}^2 \big)
    + \cH(\partial_t^n e_{\bd,h}),    
\end{split}
\end{equation}
where $\cH(\partial_t^n e_{\bd,h})$ is the collection of terms on the right-hand side of \eqref{error-3} with $w_h = \partial_t^n e_{\bd,h}$. Multiplying \eqref{discr-error-1} by $\Delta t$, summing for $n$ from 1 to $k \in \{1,\ldots,N\}$,
and using \eqref{discr-error-2}, we obtain
\begin{align}\label{discr-error-3}
    & \Delta t \sum_{n=1}^k \Big(\|e_{\bu,h}^n\|_{\bH^1(\Omega_F)}^2
    + \sum_{j=1}^{d-1}\|(e_{\bu,h}^n - \partial_t^n e_{\bd,h}) \cdot\bt^j\|_{L^2(\Sigma)}^2
    + \|e_{p_P,h}^n\|_{H^1(\Omega_P)}^2 \Big)\nonumber \\
    & \quad + \|e_{\bd,h}^k\|_{\bH^1(\Omega_P)}^2 + \|e_{p_P,h}^k\|_{L^2(\Omega_P)}^2
    + \frac{1}{\lambda}\|\alpha e_{p_P,h}^k - e_{\varphi,h}^k\|_{L^2(\Omega_P)}^2\nonumber \\
    & \quad \le C \Delta t \sum_{n=1}^k \Big(
    \epsilon\big(\|e_{\bu,h}^n\|_{\bH^1(\Omega_F)}^2
    + \|e_{p_F,h}^n\|_{L^2(\Omega_F)}^2 + \|e_{p_P,h}^n\|_{H^1(\Omega_P)}^2
    + \|e_{\varphi,h}^n\|_{L^2(\Omega_P)}^2\big)
    \nonumber \\
    & \qquad
    + C_\epsilon\big(\|e_{\bu,I}^n\|_{\bH^1(\Omega_F)}^2
    + \|e_{p_F,I}^n\|_{L^2(\Omega_F)}^2 + \|e_{p_P,I}^n\|_{H^1(\Omega_P)}^2
    + \|\partial_t^n e_{\bd,I}\|_{\bH^1(\Omega_P)}^2
    + \|\partial_t^n e_{p_P,I}\|_{H^1(\Omega_P)}^2
    \nonumber \\
    & \qquad
    + \|\partial_t^n e_{\varphi,I}\|_{L^2(\Omega_P)}^2
    + \|r_{\bd}^n\|_{\bL^2(\Sigma)}^2
    + \|r_{p_P}^n\|_{L^2(\Omega_P)}^2 + \|r_{\varphi}^n\|_{L^2(\Omega_P)}^2\big)
    + \cH(\partial_t^n e_{\bd,h}) \Big) \nonumber \\
    & \qquad
    + C \big(\|e_{\bd,h}^0\|_{\bH^1(\Omega_P)}^2 + \|e_{p_P,h}^0\|_{L^2(\Omega_P)}^2
    + \frac{1}{\lambda}\|\alpha e_{p_P,h}^0 - e_{\varphi,h}^0\|_{L^2(\Omega_P)}^2\big),   
\end{align}
where we also used Korn's inequality \eqref{Korn}, the Poincar\'e inequality
\eqref{Poincare}, and the permeability bound \eqref{perm}.
Next, for each term in
$\cH(\partial_t^n e_{\bd,h})$ we use summation by parts:
\begin{equation*}
  \Delta t \sum_{n=1}^k \phi^n \partial_t^n e_{\bd,h}
  = \phi^k e_{\bd,h}^k - \phi^1 e_{\bd,h}^0
  - \Delta t \sum_{n=2}^k \partial_t^n \phi \, e_{\bd,h}^{n-1},
\end{equation*}
where $\phi$ stands for any of the functions that appear in $\cH(\partial_t^n e_{\bd,h})$. Then, for the first term in $\cH(\partial_t^n e_{\bd,h})$ we write, using Young's inequality,
\begin{equation}\begin{split}\label{discr-error-4}
  & \Delta t \sum_{n=1}^k b_3^\Sigma(e_{\bu,I}^n,\partial_t^n e_{\bd,h}) = b_3^\Sigma(e_{\bu,I}^k,e_{\bd,h}^k) - b_3^\Sigma(e_{\bu,I}^1,e_{\bd,h}^0)
  - \Delta t \sum_{n=2}^k b_3^\Sigma(\partial_t^ne_{\bu,I},e_{\bd,h}^{n-1}) \\
  & \quad \le \epsilon \|e_{\bd,h}^k\|_{\bH^1(\Omega_P)}^2
  + C_\epsilon \|e_{\bu,I}^k\|_{\bH^1(\Omega_F)}^2
  + C\big(\|e_{\bd,h}^0\|_{\bH^1(\Omega_P)}^2 + \|e_{\bu,I}^1\|_{\bH^1(\Omega_F)}^2\big) \\
  & \qquad + C\Delta t \sum_{n=2}^{k} \big(\|e_{\bd,h}^{n-1}\|_{\bH^1(\Omega_P)}^2 
  + \|\partial_t^n e_{\bu,I}\|_{\bH^1(\Omega_F)}^2\big).
  \end{split}
\end{equation}
Now, combining \eqref{discr-error-3}--\eqref{discr-error-4}, and using for the rest of the terms in $\cH(\partial_t^n e_{\bd,h})$ bounds that are similar to \eqref{discr-error-4}, we arrive at
\begin{align}\label{discr-error-5}
    & \Delta t \sum_{n=1}^k \Big(\|e_{\bu,h}^n\|_{\bH^1(\Omega_F)}^2
    + \sum_{j=1}^{d-1}\|(e_{\bu,h}^n - \partial_t^n e_{\bd,h}) \cdot\bt^j\|_{L^2(\Sigma)}^2
    + \|e_{p_P,h}^n\|_{H^1(\Omega_P)}^2 \Big)\nonumber \\
    & \quad + \|e_{\bd,h}^k\|_{\bH^1(\Omega_P)}^2 + \|e_{p_P,h}^k\|_{L^2(\Omega_P)}^2
    + \frac{1}{\lambda}\|\alpha e_{p_P,h}^k - e_{\varphi,h}^k\|_{L^2(\Omega_P)}^2\nonumber \\
    & \quad \le C \Delta t \sum_{n=1}^k \Big(
    \epsilon\big(\|e_{\bu,h}^n\|_{\bH^1(\Omega_F)}^2
    + \|e_{p_F,h}^n\|_{L^2(\Omega_F)}^2 + \|e_{p_P,h}^n\|_{H^1(\Omega_P)}^2
    + \|e_{\varphi,h}^n\|_{L^2(\Omega_P)}^2\big)
    \nonumber \\
    & \qquad
    + \|e_{\bu,I}^n\|_{\bH^1(\Omega_F)}^2
    + \|e_{p_F,I}^n\|_{L^2(\Omega_F)}^2 + \|e_{p_P,I}^n\|_{H^1(\Omega_P)}^2
    + \|r_{\bd}^n\|_{\bL^2(\Sigma)}^2
    + \|r_{p_P}^n\|_{L^2(\Omega_P)}^2
    + \|r_{\varphi}^n\|_{L^2(\Omega_P)}^2
    \nonumber \\
    & \qquad
    + \|\partial_t^n e_{\bd,I}\|_{\bH^1(\Omega_P)}^2
    + \|\partial_t^n e_{p_P,I}\|_{H^1(\Omega_P)}^2
+ \|\partial_t^n e_{\varphi,I}\|_{L^2(\Omega_P)}^2
+ \|\partial_t^n e_{\bu,I}\|_{\bH^1(\Omega_F)}^2 \Big) \nonumber \\
& \qquad + C \Big( \Delta t \sum_{n=2}^{k} \big(\|e_{\bd,h}^{n-1}\|_{\bH^1(\Omega_P)}^2
+ \|\partial_t^n \partial_t^n e_{\bd,I}\|_{\bH^1(\Omega_P)}^2
+ \|\partial_t^n r_{\bd}\|_{\bL^2(\Sigma)}^2 \big)
\nonumber \\
& \qquad \quad
+\|e_{\bd,h}^0\|_{\bH^1(\Omega_P)}^2 + \|e_{p_P,h}^0\|_{L^2(\Omega_P)}^2
+ \frac{1}{\lambda}\|\alpha e_{p_P,h}^0 - e_{\varphi,h}^0\|_{L^2(\Omega_P)}^2
+ \|\partial_t^k e_{\bd,I}\|_{\bH^1(\Omega_P)}^2
+ \|\partial_t^1 e_{\bd,I}\|_{\bH^1(\Omega_P)}^2
\nonumber \\
& \qquad \quad
+ \|e^k_{\bu,I}\|_{\bH^1(\Omega_F)}^2 + \|e^k_{\bd,I}\|_{\bH^1(\Omega_P)}^2
+ \|e^k_{p_P,I}\|_{H^1(\Omega_P)}^2 + \|e^k_{\varphi,I}\|_{L^2(\Omega_P)}^2
+ \|r^k_{\bd}\|_{\bL^2(\Sigma)}^2
\nonumber \\
& \qquad \quad
+ \|e^1_{\bu,I}\|_{\bH^1(\Omega_F)}^2 + \|e^1_{\bd,I}\|_{\bH^1(\Omega_P)}^2
+ \|e^1_{p_P,I}\|_{H^1(\Omega_P)}^2 + \|e^1_{\varphi,I}\|_{L^2(\Omega_P)}^2
+ \|r^1_{\bd}\|_{\bL^2(\Sigma)}^2
\Big).
\end{align}
We continue with bounding $\|e_{p_F,h}^n\|_{L^2(\Omega_F)}^2$ and $\|e_{\varphi,h}^n\|_{L^2(\Omega_P)}^2$, which appear on the right-hand side above. The inf-sup condition \eqref{inf-sup-stokes} and \eqref{error-1} imply
\begin{align*}
&    \Delta t \sum_{n=1}^k \|e_{p_F,h}^n\|_{L^2(\Omega_F)}^2 \le C \Delta t \sum_{n=1}^k
    \Big( \|e_{\bu,h}^n\|_{\bH^1(\Omega_F)}^2
    + \sum_{j=1}^{d-1}\|(e_{\bu,h}^n - \partial_t^n e_{\bd,h}) \cdot\bt^j\|_{L^2(\Sigma)}^2
    + \|e_{p_P,h}^n\|_{H^1(\Omega_P)}^2
\\
    &\quad
        + \|e_{\bu,I}^n\|_{\bH^1(\Omega_F)}^2
    + \|e_{p_F,I}^n\|_{L^2(\Omega_F)}^2 + \|e_{p_P,I}^n\|_{H^1(\Omega_P)}^2
    + \|\partial_t^n e_{\bd,I}\|_{\bH^1(\Omega_P)}^2
    + \|r_{\bd}^n\|_{\bL^2(\Sigma)}^2 \Big).
  \end{align*}
On the other hand, 
the inf-sup condition \eqref{inf-sup-elast} and \eqref{error-3} allow us to get 
\begin{equation}\begin{split}\label{phi-discr}
    & \Delta t \sum_{n=1}^k \|e_{\varphi,h}^n\|_{L^2(\Omega_P)}^2
    \le C \Delta t \sum_{n=1}^k \Big(
    \|e_{p_P,h}^n\|_{H^1(\Omega_P)}^2
    + \sum_{j=1}^{d-1}\|(e_{\bu,h}^n - \partial_t^n e_{\bd,h}) \cdot\bt^j\|_{L^2(\Sigma)}^2
    + \|e_{\bd,h}^n\|_{\bH^1(\Omega_P)}^2\\
    & \
    + \|e_{\bu,I}^n\|_{\bH^1(\Omega_F)}^2
    + \|e_{p_P,I}^n\|_{H^1(\Omega_P)}^2
    + \|e_{\bd,I}^n\|_{\bH^1(\Omega_P)}^2
    + \|\partial_t^n e_{\bd,I}\|_{\bH^1(\Omega_P)}^2
    + \|e_{\varphi,I}^n\|_{L^2(\Sigma)}^2
    + \|r_{\bd}^n\|_{\bL^2(\Sigma)}^2 \Big).
  \end{split}
\end{equation}
Before combining \eqref{discr-error-5}--\eqref{phi-discr}, we note that
the terms involving $\partial_t^n$  on the right-hand sides
require special treatment. In particular, it holds for $\phi(t)$ that
\begin{equation*}
  (\partial_t^n \phi)^2
  = \frac{1}{\Delta t^2}
  \Big(\int_{t_{n-1}}^{t_n} \partial_t \phi \Big)^2
\le \frac{1}{\Delta t}
\int_{t_{n-1}}^{t_n} (\partial_t \phi)^2,
\end{equation*}
implying, for $\phi:[0,T] \to V$, where $V$ is a Banach space with norm $\|\cdot\|_V$, that
\begin{equation*}
  \Delta t \sum_{n=1}^k \|\partial_t^n \phi\|_V^2
  \le \int_0^{t_k} \|\partial_t \phi\|_V^2.
\end{equation*}
%
We then have
\begin{equation}\begin{split}\label{discr-error-6}
& \Delta t \sum_{n=1}^k \big(\|\partial_t^n e_{\bd,I}\|_{\bH^1(\Omega_P)}^2
    + \|\partial_t^n e_{p_P,I}\|_{H^1(\Omega_P)}^2
+ \|\partial_t^n e_{\varphi,I}\|_{L^2(\Omega_P)}^2
+ \|\partial_t^n e_{\bu,I}\|_{\bH^1(\Omega_F)}^2 \big) \\
& \quad \le C \int_0^{t_k}\big(
\|\partial_t e_{\bd,I}\|_{\bH^1(\Omega_P)}^2
    + \|\partial_t e_{p_P,I}\|_{H^1(\Omega_P)}^2
+ \|\partial_t e_{\varphi,I}\|_{L^2(\Omega_P)}^2
+ \|\partial_t e_{\bu,I}\|_{\bH^1(\Omega_F)}^2 \big).
\end{split}\end{equation}
To bound the term $\|\partial_t^n \partial_t^n e_{\bd,I}\|_{\bH^1(\Omega_P)}^2$ in \eqref{discr-error-5}, for any $\phi(t)$ we have,
using the integral mean value theorem and the mean value theorem,
$$
(\partial_t^n \partial_t^n \phi)^2
= \frac{1}{\Delta t^4} \Big(\int_{t_{n-1}}^{t^n} \partial_t \phi
- \int_{t_{n-2}}^{t^{n-1}} \partial_t \phi \Big)^2
= \frac{1}{\Delta t^2}\big(\partial_t \phi(\xi^n) - \partial_t \phi(\xi^{n-1})\big)^2
= \partial_{tt} \phi(\xi), \quad \xi \in [t_{n-2},t_n].
$$
Therefore it holds that
\begin{equation*}
  \Delta t \sum_{n=2}^k \|\partial_t^n \partial_t^n e_{\bd,I}\|_{\bH^1(\Omega_P)}^2
  \le C \mathop{\esssup}\limits_{t\in (0,t_k)}\|\partial_{tt}e_{\bd,I}\|_{\bH^1(\Omega_P)}^2.
\end{equation*}
Next, we need to bound the time discretisation error. \cred{Taylor's} expansion gives
$$
r_\phi^n = \frac{1}{\Delta t}\int_{t_n}^{t_{n-1}} \partial_{tt} \phi(t)(t_{n-1} - t) \dt,
$$
thus, for $\phi:[0,T] \to V$,
\begin{equation*}
  \|r_\phi^n\|_V \le C \Delta t \mathop{\esssup}\limits_{t\in (t_{n-1},t_n)}
  \|\partial_{tt} \phi\|_V \quad \text{and} \quad 
  \Delta t \sum_{n=1}^k \|r_\phi^n\|_V^2
  \le C \Delta t^2 \int_0^{t_k} \|\partial_{tt} \phi\|_V^2.
\end{equation*}
Similarly,
\begin{equation*}
  \Delta t \sum_{n=1}^k \|\partial_t^n r_\phi\|_V^2
  \le C \Delta t^2 \int_0^{t_k} \|\partial_{ttt} \phi\|_V^2. 
\end{equation*}
Finally, we need a bound on the initial discretisation error. Recalling the construction of the discrete initial data in the proof of
Theorem~\ref{thm:well-posed} and the definition of the continuous initial data in the proof of Theorem~\ref{thm:well-posed-weak}, we note that $(\bu_h^0,p_{F,h}^0)$ is the Stokes elliptic projection of $(\bu(0),p_{F,h}(0))$ based on \eqref{method-1}--\eqref{method-2} at $t=0$ with $\partial_t \bd_h = 0$ and
the term $b_2^\Sigma(p_{P,0} - p_{P,h,0},\bv_h)$ on the right-hand side. In addition,
$(\bd_h^0,\varphi_h^0)$ is the elliptic projection of $(\bd(0),\varphi(0))$ based on the stable problem \eqref{method-3}--\eqref{method-5} at $t = 0$ with $\partial_t \bd_h = 0$ and the terms $b_3^\Sigma(\bu(0) - \bu_h^0,\bw_h)$,
$b_4^\Sigma(\bw_h,p_{P,0} - p_{P,h,0})$, and $b_2^P(\psi_h,p_{P,0} - p_{P,h,0})$ on the right-hand side. Classical finite element analysis for these two problems implies that
\begin{equation}\begin{split}\label{init-error}
&  \|e_{\bu,h}^0\|_{\bH^1(\Omega_F)} + \|e_{p_F,h}^0\|_{L^2(\Omega_F)}
  + \|e_{\bd,h}^0\|_{\bH^1(\Omega_P)} + \|e_{p_P,h}^0\|_{L^2(\Omega_P)}
  + \|e_{\varphi,h}^0\|_{L^2(\Omega_P)} \\
& \quad \le C \big(\|e_{\bu,I}^0\|_{\bH^1(\Omega_F)} + \|e_{p_F,I}^0\|_{L^2(\Omega_F)}
  + \|e_{\bd,I}^0\|_{\bH^1(\Omega_P)} + \|e_{p_P,I}^0\|_{L^2(\Omega_P)}
  + \|e_{\varphi,I}^0\|_{L^2(\Omega_P)} \big).
  \end{split}
\end{equation}
The assertion of the theorem follows from combining \eqref{discr-error-5}--\eqref{init-error}, using the discrete Gronwall inequality \cite[Lemma~1.4.2]{QV-NumPDE} for the term $\Delta t \sum_{n=2}^{k} \|e_{\bd,h}^{n-1}\|_{\bH^1(\Omega_P)}^2$, 
and applying the triangle inequality and the approximation properties \eqref{IV-approx}--\eqref{IZ-approx}.
\end{proof}

\section{Representative computational results}\label{sec:results}
All routines have been implemented using the open source finite element library \texttt{FEniCS} \cite{alnaes15}, as well as the specialised module \texttt{multiphenics} \cite{multiphenics} for handling subdomain- and boundary- restricted terms that 
we require to impose transmission conditions across interfaces. The solvers are monolithic and the solution of all linear 
systems is performed with the distributed direct solver MUMPS. We present \cred{four} examples: convergence tests (example 1), channel flow behaviour (example 2),  \cred{a simulation of subsurface flow with heterogeneous random permeability (example 3), and} the solution of an axisymmetric problem using parameters relevant to eye poromechanics \cred{(example 4). For examples 1,2 and 4 we use Taylor-Hood elements for the pairs velocity-pressure and displacement - total pressure, plus continuous and piecewise  quadratic elements for Biot fluid pressure. For example 3, the inf-sup stable pair used is the MINI element, and the Biot fluid pressure is approximated with continuous and piecewise linear elements.} 

\subsection{Convergence tests against manufactured solutions}
The accuracy of the spatio-temporal discretisation is verified using the following closed-form solutions 
defined on the domains $\Omega_F = (-1,1)\times(0,2)$, $\Omega_P = (-1,1)\times(-2,0)$, 
separated by the interface $\Sigma = (-1,1)\times\{0\}$
\begin{equation}\label{eq:manuf}\begin{split}
\bu = \sin(t)\begin{pmatrix}
-\cos(\pi x)\sin(\pi y)\\
\sin(\pi x)\cos(\pi y)\end{pmatrix}, \quad 
	p_F = \sin(t)\cos(\pi x)\cos(\pi y), \\
\bd = \cos(t)\curl(\sin(\pi xy)), \quad 
	p_P = \cos(t)\sin(\pi x)\sin(\pi y), \quad  \varphi  = \alpha p_P- \lambda \vdiv\bd.
\end{split}\end{equation}

\begin{table}[!t]
	\setlength{\tabcolsep}{3pt}
	\renewcommand{\arraystretch}{1.2}
	\centering 
	\begin{tabular}{|r|ccccccccccc|}
		\hline 
		DoF & $h$ & $\texttt{e}_{\bu}$ & \texttt{rate} & $\texttt{e}_{p_F}$ & \texttt{rate} & $\texttt{e}_{\bd}$ & \texttt{rate} & $\texttt{e}_{p_P}$ & \texttt{rate} & $\texttt{e}_{\varphi}$ & \texttt{rate} \tabularnewline
		\hline
		\hline
144 & 1.414 &     4.70604 & -- & 0.82152 &  -- & 21.4632 & -- &  2.80468 &  -- & 19.3402 &   -- \tabularnewline
456 & 0.7071 &    1.66701 & 1.497 & 0.29604 & 1.472 & 9.75813 & 1.137 & 0.84917 & 1.721 & 8.20939 & 1.236\tabularnewline
1608 & 0.3536 &   0.40411 & 2.044 & 0.06081 & 2.260 & 2.78206 & 1.811 & 0.24942 & 1.769 & 1.21860 & 2.422\tabularnewline
6024 & 0.1768 &   0.09746 & 2.051 & 0.01376 & 2.138 & 0.72730 & 1.936 & 0.06567 & 1.943 & 0.25337 & 2.240\tabularnewline
23304 & 0.0884 &  0.02405 & 2.014 & 0.00328 & 2.052 & 0.18545 & 1.984 & 0.01661 & 1.985 & 0.05166 & 2.231\tabularnewline
91656 & 0.0442 &  0.00597 & 2.005 & 0.00081 & 2.018 & 0.04683 & 1.991 & 0.00410 & 1.991 & 0.00472 & 2.098\tabularnewline
363528 & 0.0221 & 0.00151 & 2.001 & 0.00022 & 2.008 & 0.01172 & 1.998 & 0.00100 & 1.995 & 0.00061 & 2.015\tabularnewline
\hline 
	\end{tabular} 
\caption{Example 1. Experimental errors associated with the spatial discretisation and convergence rates for the approximate solutions 
		$\bu_h$, $p_{F,h}$, $\bd_h$,  $p_{P,h}$, and $\varphi_h$ using $\mathbb{P}^2_2-\mathbb{P}_1-\mathbb{P}^2_2-\mathbb{P}_2-\mathbb{P}_1$ elements and computed at the last time step.} \label{table:ex01}
\end{table}

We consider $\Gamma_P^{p}$ to be the bottom horizontal segment, $\Gamma_P^{\bd}$ to be the lower vertical walls, 
$\Gamma_F^{\bu}$ to be the top horizontal segment, and 
$\Gamma_F^{\bsigma}$ to be the upper vertical walls.
the synthetic model parameters are taken as 
\begin{gather*} 
\lambda = 1000, \quad \mu_s = 1, \quad \mu_f = 0.1, \quad 
\alpha =   \gamma = 1, \quad  
c_0 = 0.01, \quad \kappa = 0.001, \quad \rho_s = 1.2, \quad \rho_f = 1, \end{gather*}
all regarded adimensional and do not have physical relevance in this case, as we will be simply testing the convergence of the finite element approximations. The manufactured solutions \eqref{eq:manuf} are used to prescribe initial conditions, essential non-homogeneous velocity and displacement 
boundary conditions, as well as natural non-homogeneous flux conditions for fluid pressure. These 
functions do not necessarily 
fulfil the interface conditions, so additional terms are required giving modified relations on $\Sigma$:
\begin{align*}
\bu\cdot\nn  - (\partial_t \bd  - \frac{\kappa}{\mu_f} \nabla p_P )\cdot \nn & = m^1_{\Sigma,\text{ex}} ,\qquad 
(2\mu_f \beps(\bu) - p_F\bI)\nn - (2\mu_s\beps(\bd) -\varphi \bI ) \nn  = \bm^2_{\Sigma,\text{ex}},\\
\nn\cdot (2\mu_f \beps(\bu) - p_F\bI)\nn +  \cred{p_P}  &= m^3_{\Sigma,\text{ex}},\qquad 
 \nn\cdot (2\mu_f \beps(\bu) - p_F\bI)\bt + \frac{\gamma\mu_f}{\sqrt{\kappa}} (\bu - \partial_t \bd)\cdot \bt   = m^4_{\Sigma,\text{ex}},
\end{align*}
and the additional scalar and vector terms $m^i_{\Sigma,\text{ex}}$ (computed with the exact solutions \eqref{eq:manuf}) entail the following changes in the 
linear functionals 
\begin{gather*}
F^F(\bv) =  \rho_f\int_{\Omega_F} \gg\cdot\bv + \langle m^3_{\Sigma,\text{ex}},\bv\cdot\nn\rangle_\Sigma
+ \langle m^4_{\Sigma,\text{ex}},\bv\cdot\bt\rangle_\Sigma,  \\
 F^P(\bw) = \rho_s\int_{\Omega_P} \ff\cdot\bw + \int_{\Omega_P} \bm^2_{\Sigma,\text{ex}}\cdot\bw 
 + \langle m^3_{\Sigma,\text{ex}},\bw\cdot\nn\rangle_\Sigma
 + \langle m^4_{\Sigma,\text{ex}},\bw\cdot\bt\rangle_\Sigma,\\
G(q_P) =  \int_{\Omega_P} \rho_f\gg\cdot\nabla q_P - \langle \rho_f\gg\cdot\nn,q_P\rangle_\Sigma - \langle m^1_{\Sigma,\text{ex}},q_P\rangle_\Sigma.
  \end{gather*}

\begin{table}[t]
		\setlength{\tabcolsep}{3pt}
		\renewcommand{\arraystretch}{1.2}
		\centering 
		\begin{tabular}{|r|cccccccccc|}
			\hline 
			$\Delta t$ & $\hat{\texttt{e}}_{\bu}$ & \texttt{rate} & $\hat{\texttt{e}}_{p_F}$ & \texttt{rate} & $\hat{\texttt{e}}_{\bd}$ & \texttt{rate} & $\hat{\texttt{e}}_{p_P}$ & \texttt{rate} 
			& $\hat{\texttt{e}}_{\varphi}$ & \texttt{rate}  \tabularnewline
			\hline
			\hline
	           0.5 & 10.549 &   -- & 2.5844 &      -- &  43.764 &      -- &  8.6738 &      -- & 38.734 &      --\tabularnewline
	      0.25 & 5.1408 &    0.984 & 1.2710 &    1.163 & 21.840 &    1.211 & 4.3673 &    1.125 & 19.371 &    1.055 \tabularnewline
	      0.125 & 2.2689 &   1.205 & 0.6485 &    1.182 & 10.661 &    1.204 & 2.1690 &    1.032 & 9.6901 &    0.962 \tabularnewline
	      0.0625 & 1.1365 &  1.107 & 0.3216 &    1.095 & 5.4517 &    1.121 & 1.0795 &    1.025 & 4.8614 &     0.981\tabularnewline
	      0.03125 & 0.6813 & 1.004 & 0.1615 &    1.071 & 2.7326 &    1.043 & 0.5527 &    0.987 & 2.4396 &    0.992\tabularnewline
			\hline 
		\end{tabular} 
		\caption{Example 1. Experimental cumulative errors associated with the temporal discretisation and convergence rates for the approximate solutions 
		$\bu_h$, $p_{F,h}$, $\bd_h$,  $p_{P,h}$, and $\varphi_h$ using a backward Euler scheme.} \label{table:ex01b}
	\end{table}

We generate successively refined simplicial grids and use a sufficiently small (non dimensional) time step $\Delta t = 0.01$   
and simulate a relatively short time horizon $t_{\mathrm{final}} = 3\Delta t$, to guarantee that 
the error produced by the time discretisation does not dominate. Errors between the 
approximate and exact solutions are tabulated against the number of degrees of freedom in Table~\ref{table:ex01}. This error history confirms the optimal convergence of the finite element scheme (in this case, second-order) 
for all variables in their respective norms, where a slightly better rate is seen for the total pressure. \cred{In the table caption, $\mathbb{P}_k$ denotes 
the space of piecewise polynomial functions being of total degree up to $k$. }

For this example we have reincorporated the acceleration and the nonlinear convective term, and the Newton-Raphson algorithm takes, in average, three iterations to reach the prescribed tolerance of $10^{-8}$ on the 
residuals.

The convergence in time achieved by the backward Euler method is verified by partitioning the \cred{time} interval $(0,1)$ into successively refined uniform discretisations and computing accumulated errors
\[ \hat{\texttt{e}}_s = \biggl( \sum_{n=1}^{N} \Delta t \|s(t_{n+1})- s_h^{n+1}\|^2_{\star}\biggr)^{1/2},\] 
where {$\| \cdot \|_\star$} denotes the appropriate space norm for the generic vector or scalar field $s$. For this test we use a fixed mesh involving 10K DoFs. 
The results are shown in Table \ref{table:ex01b}, confirming the optimal first-order convergence.

\subsection{Channel filtration and stress build-up on interface deformation} 

\begin{figure}[!t]
\begin{center}
\includegraphics[width=0.292\textwidth]{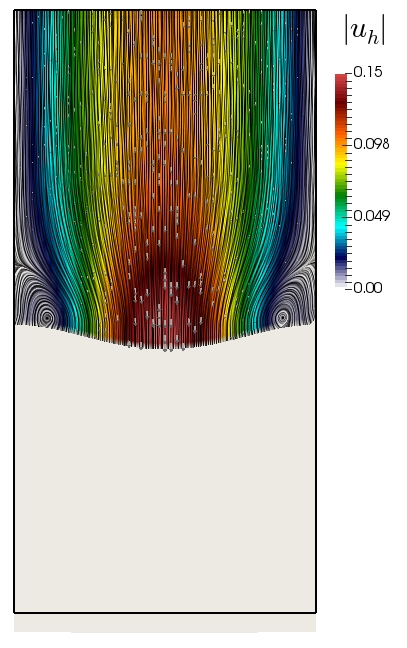}
\includegraphics[width=0.292\textwidth]{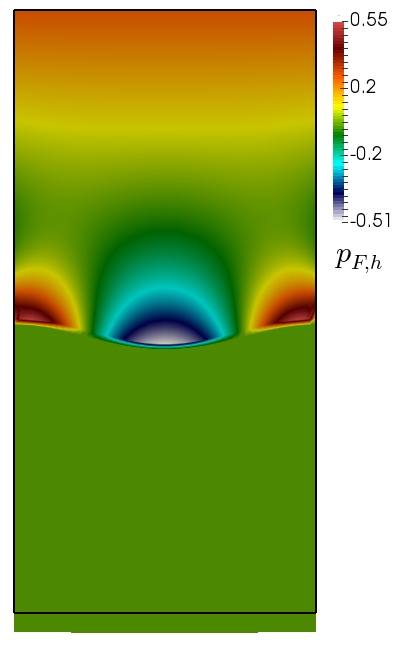}
\includegraphics[width=0.292\textwidth]{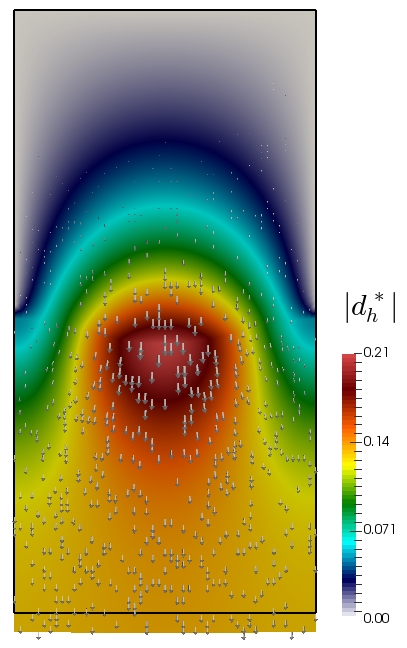}\\
\includegraphics[width=0.292\textwidth]{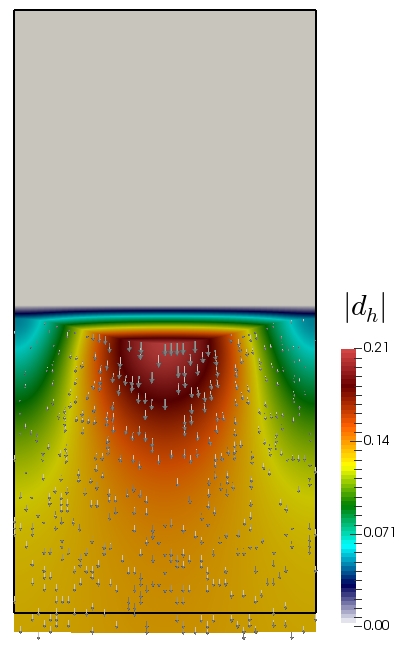}
\includegraphics[width=0.292\textwidth]{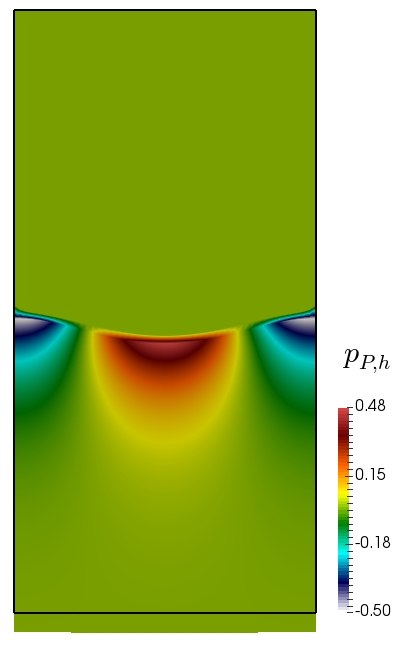}
\includegraphics[width=0.292\textwidth]{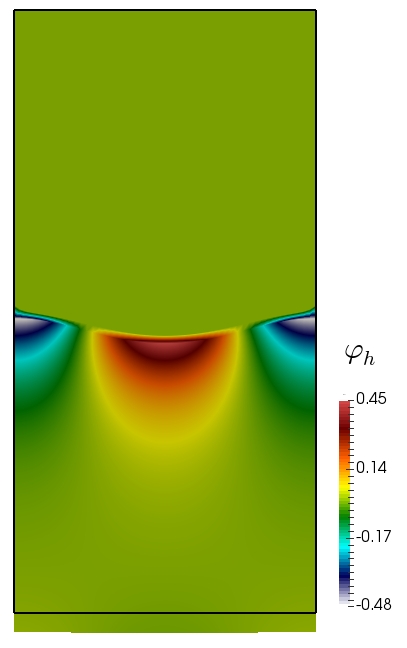}
\end{center}

\vspace{-3mm}
\caption{Example 2. Infiltration into a deformable porous medium. Velocity in the fluid domain, fluid pressure, extended displacement of fluid and solid domains, solid displacement, fluid pressure in the poroelastic domain, and total pressure. All snapshots are 
taken at time $t = 2$, and the black outer line indicates the location of the undeformed domain.}\label{ex02:results}
\end{figure}

{Although the model stated in Section~\ref{sec:model} holds in the limit of small strains, it is possible to have large displacements, likely located near the interface (and without violating the model assumptions). In this scenario, the discretisation might no longer be suitable. A simple remedy consists in smoothly moving the fluid domain and the fluid mesh to avoid distortions generated near the interface.} We use a standard harmonic extension (see e.g. \cite{bukac15}) 
that is solved at each time step, just after \eqref{eq:fully-discrete}: Find $\bd_h^* = \bd_h + \widehat{\bd}_h$ such that 
\begin{equation}\label{harmonic}
- D \Delta \widehat{\bd}_h = \cero \quad \text{ in $\Omega_F$}, \qquad \widehat{\bd}_h = \bd_h \text{ on $\Sigma$}, \qquad \text{ and } \quad  \widehat{\bd}_h = \cero \text{ on $\partial\Omega_F.$} 
\end{equation}
And then we perform an $L^2-$projection of both $\bd_h$ and $\widehat{\bd}_h$ into $\bW_h + \bV_h$ and add them to obtain the global displacement $\bd_h^*$. 

We illustrate the effect of using \eqref{harmonic} by looking at the behaviour of normal filtration into a 2D deformable porous medium. The same domains as in the accuracy tests are employed here (that is,  \cred{the single phase fluid} domain located on top of the poroelastic domain), however the 
boundary treatment is as follows, assuming that the flow is driven by pressure differences only. On the top segment we impose the fluid 
pressure $p_F^{\mathrm{in}} = p_0 \sin^2(\pi t)$ with $p_0 = 2$, and on the outlet (the bottom segment) 
the fluid pressure $p_P^{\mathrm{out}} = 0$. On the vertical walls of $\Omega_F$ 
we set $\bu = \cero$ while on the vertical walls of $\Omega_P$ we set the slip condition 
$\bd\cdot\nn = 0$ (and the porous structure is free to deform on the outlet boundary{, i.e., zero traction imposed}). 
The permeability is $\kappa=0.02$ and the remaining 
parameters are 
\begin{gather*}
\lambda = 10, \quad \mu_s = 5, \quad \rho_s = 1.1, \quad 
\rho_f = 1, \quad \alpha = 0.6, \quad 
\gamma = 0.1, \quad C_0= 0.01,
\end{gather*} 
and we assume that there are no body forces nor gravity acting on the system. In contrast with the convergence tests, for this example 
we use \cred{piecewise linear and continuous} finite elements for the approximation of $p_P$.

\begin{figure}[!t]
\begin{center}
\includegraphics[width=0.292\textwidth]{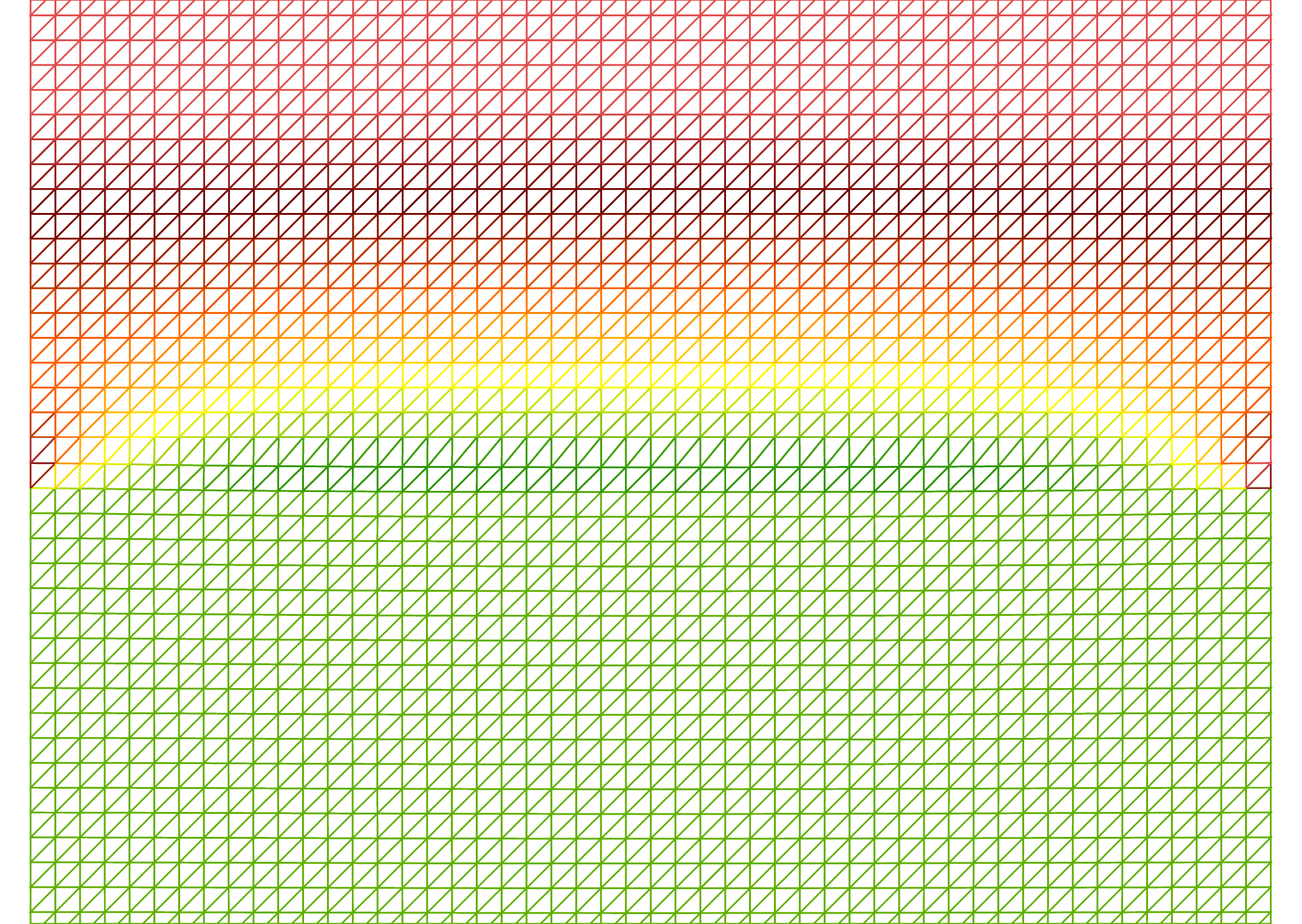}
\includegraphics[width=0.292\textwidth]{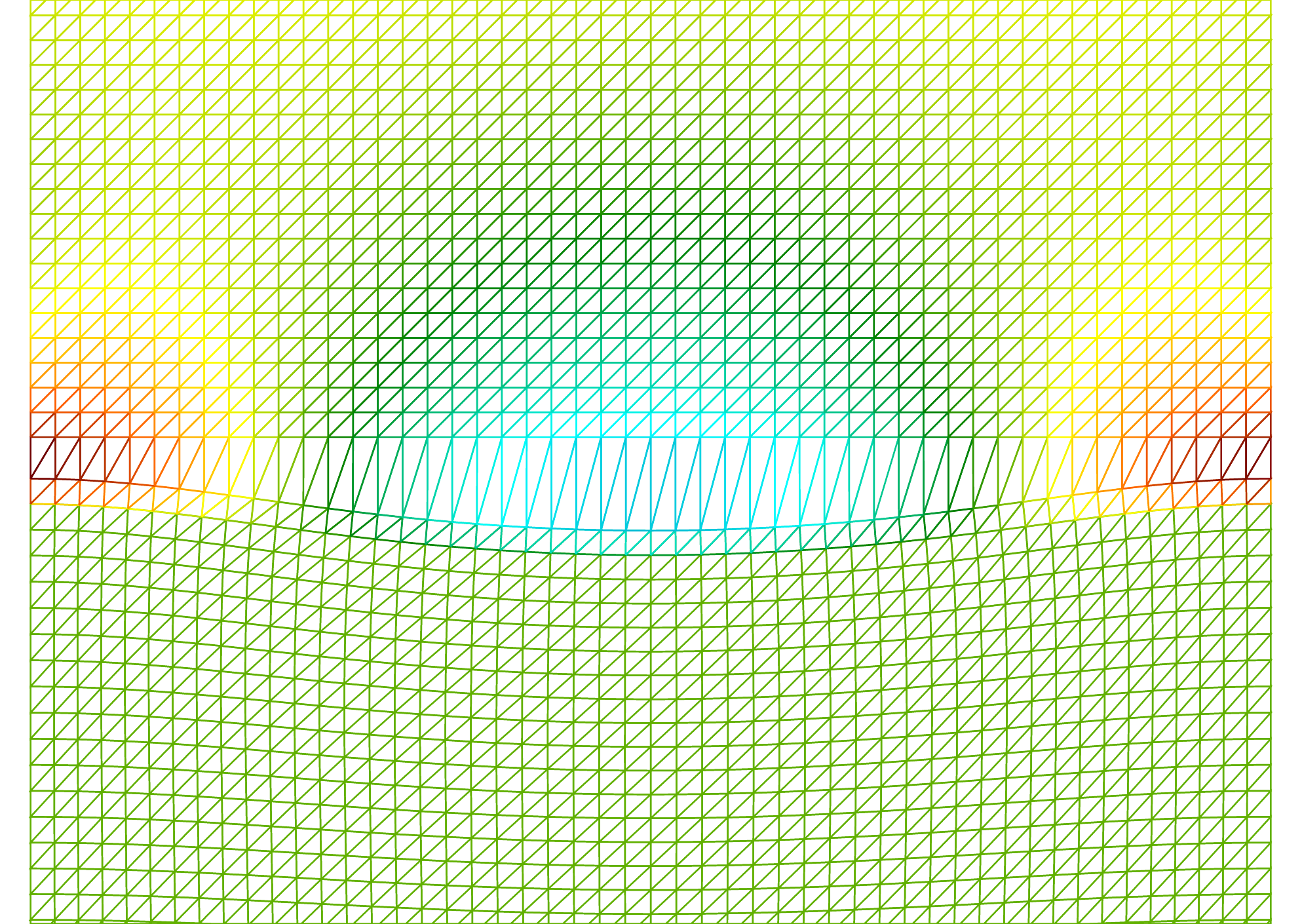}
\includegraphics[width=0.292\textwidth]{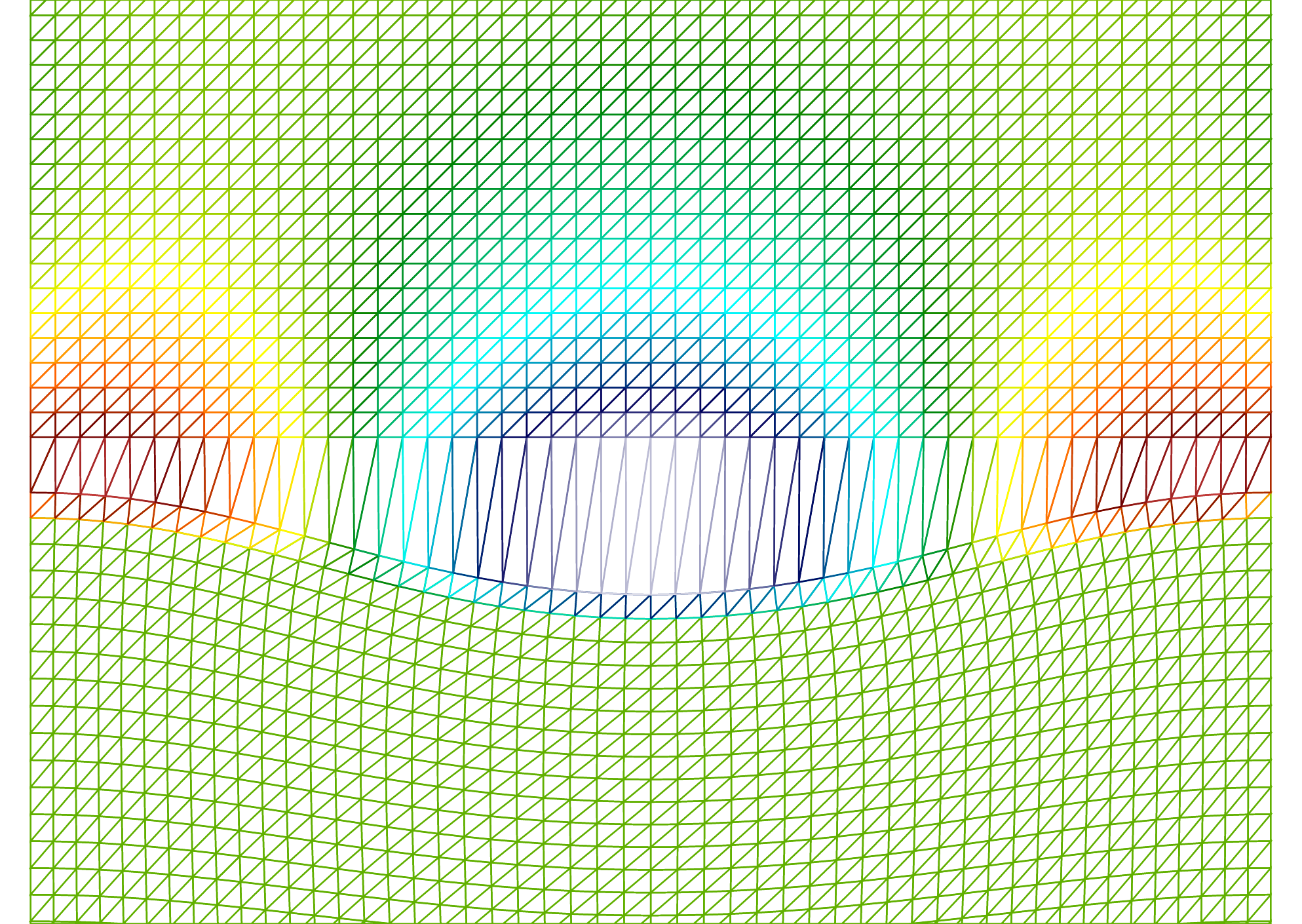}\\[1ex]
\includegraphics[width=0.292\textwidth]{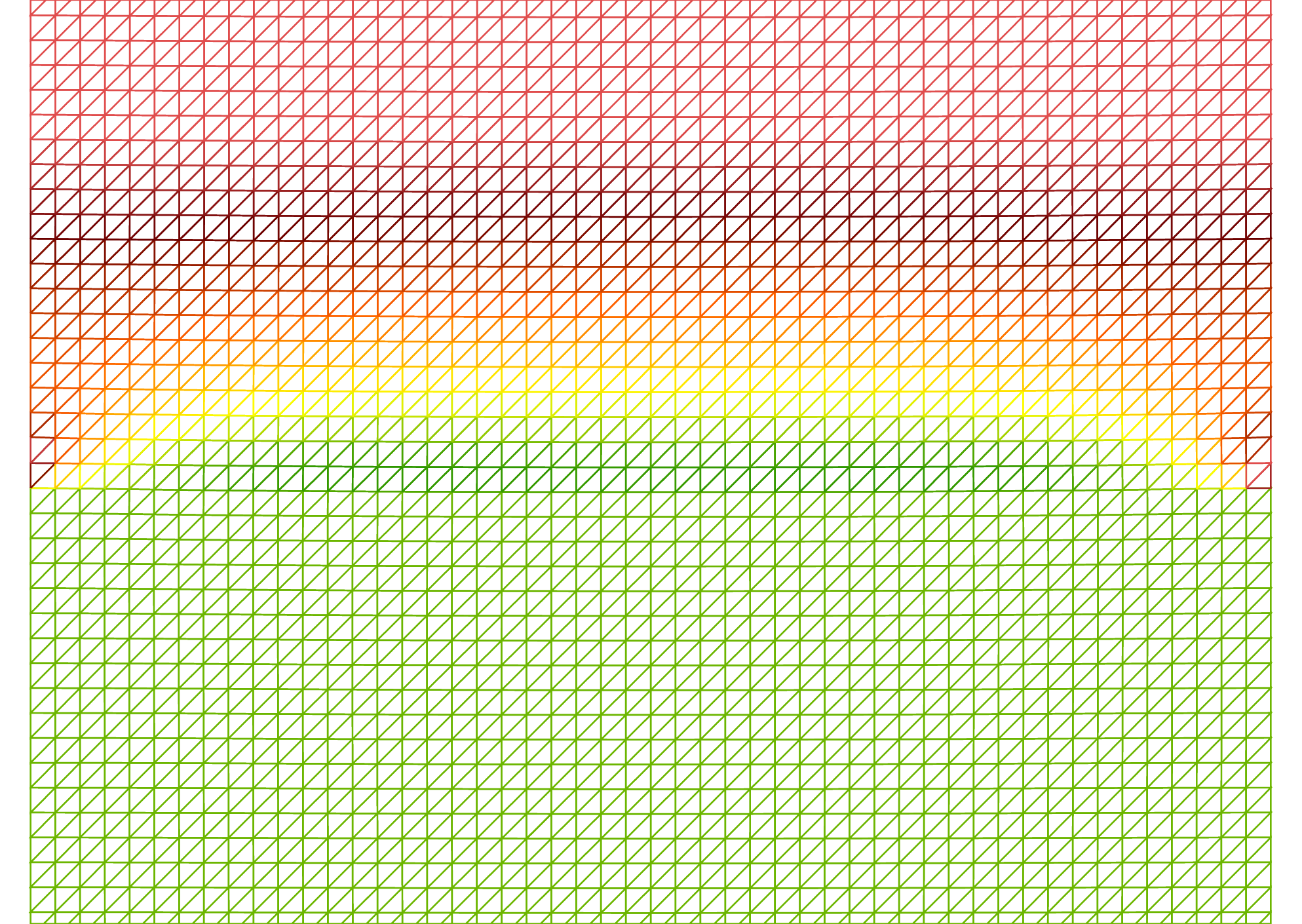}
\includegraphics[width=0.292\textwidth]{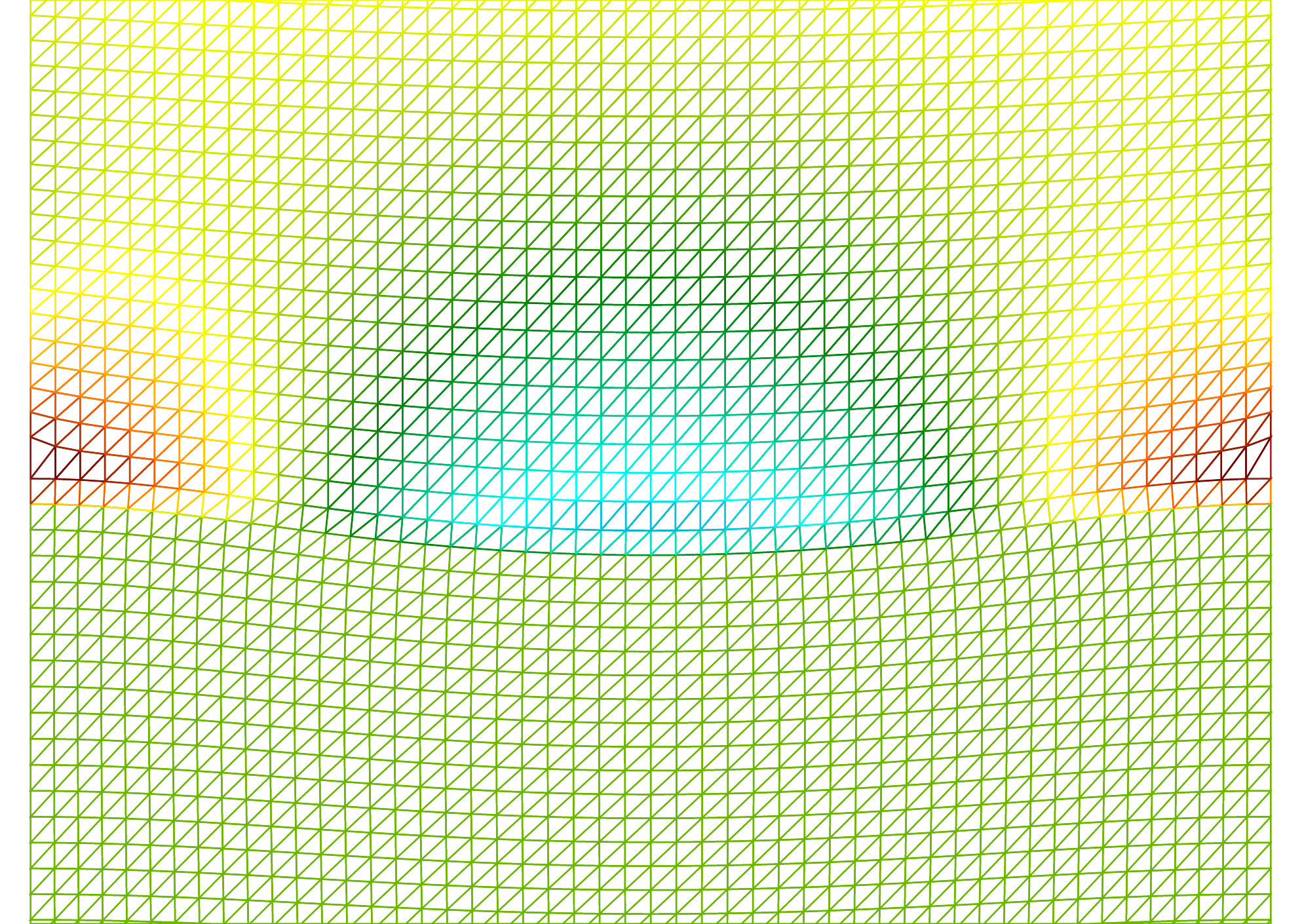}
\includegraphics[width=0.292\textwidth]{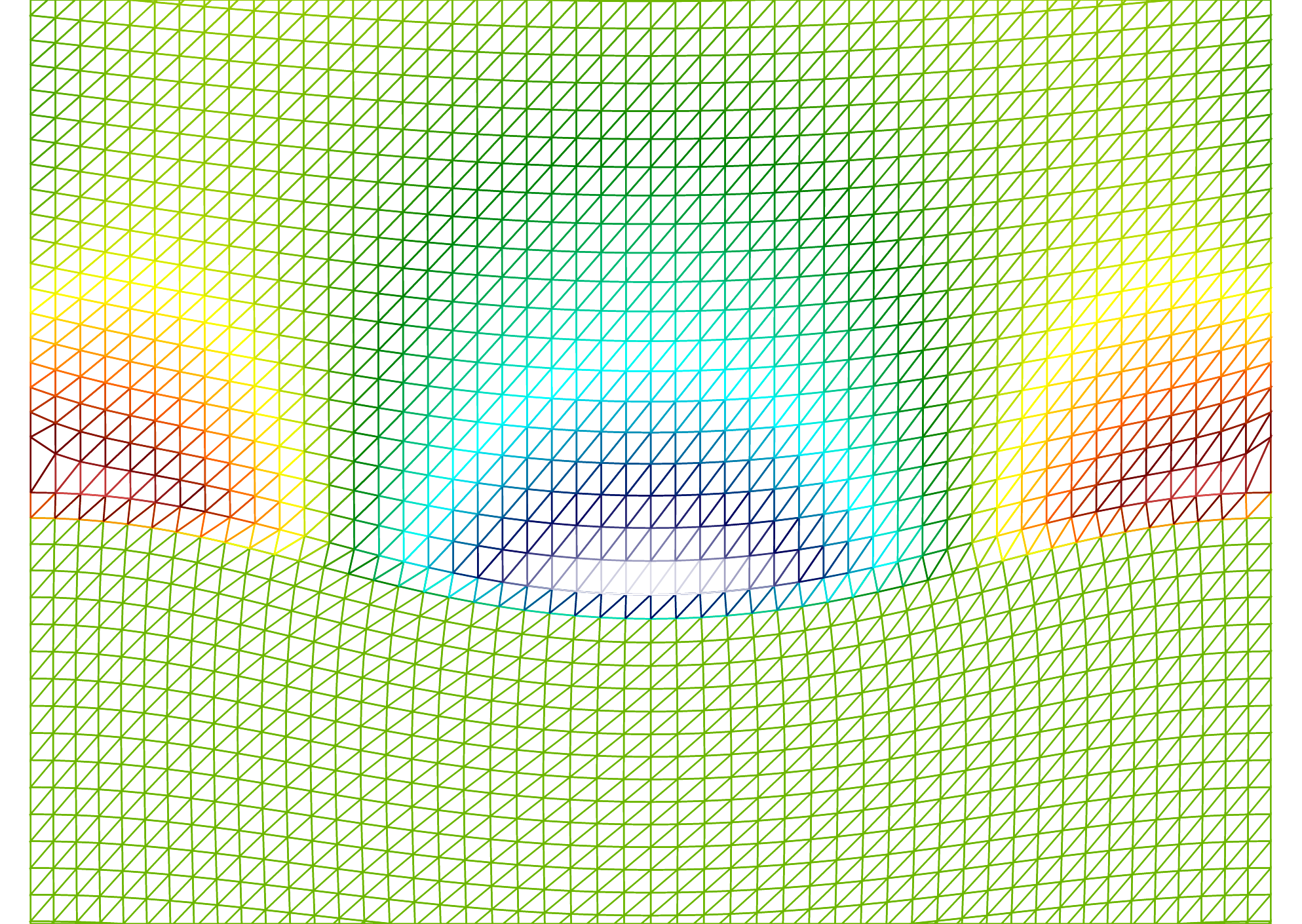}
\end{center}

\vspace{-3mm}
\caption{Example 2. Zoom of the meshes on the interface at times $t=0$, $t=1$ and $t=2$. Effect of using or not the harmonic extension to move the fluid domain (bottom and top, respectively).}\label{ex02:meshes}
\end{figure}

The numerical results are presented in Figure~\ref{ex02:results}. {The effect of the interface can be clearly seen in the top left panel where recirculation vortices replace the parabolic profile at the inlet; also, in the poroelastic domain, we see that, close to the interface, the solid displacement and the fluid pressure are heterogeneous in the horizontal direction, before recovering the expected constant value (constant in the horizontal direction) expected in the far field.}  We also plot the evolution of the mesh deformation near the interface. From Figure~\ref{ex02:meshes} one can see that for large enough {interfacial displacements}, the elements close to it exhibit a large distortion.

\cred{
\subsection{Simulation of subsurface fracture flow}
Next we include a test case that illustrates the applicability of the formulation in hydraulic fracturing. The problem setup follows \cite[Section 5.2.2]{ambar19} (except that we do not model tracer transport), considering a rectangular domain $\Omega = (0,3.048)\,[\text{m}]\times(0,6.096)$\,[m] including a relatively large fracture regarded as a macro void, or open channel $\Omega_F$ filled with an incompressible fluid (see Figure~\ref{fig:exF}, left), and the Biot domain is $\Omega_P = \Omega\setminus\Omega_F$. The heterogeneous (but isotropic in the $xy$-plane) permeability $\kappa(\bx)$  is the non-smooth pattern taken from the Cartesian SPE10 benchmark data / model 2 (see, e.g., \cite{aarnes07,christie01}), which we rescale as in \cite{ambar18} and project onto a piecewise constant field defined on an unstructured triangular mesh for the poroelastic geometry. There are 85 distinct layers within two general categories. We choose layer 80 from the dataset, which corresponds to the Upper Ness region exhibiting a fluvial fan pattern (flux channels of higher permeability and porosity). 
The Lam\'e parameters (in [KPa]) are highly heterogeneous and determined from the Poisson ratio $\nu = 0.2$ and the Young modulus $E (\bx)= 10^7(1-2\phi(\bx))^{2.1}$, where $\phi(\bx)$ is the porosity field also taken from layer 80 of the benchmark dataset (see Figure~\ref{fig:exF}, right). It is plotted in logarithmic scale and the contrast is of about $10^8$. No gravity and no external loads are considered, and 
the remaining parameters are $\alpha = \gamma = 1$, $c_0 = 6.89\cdot 10^{-2}\,[\text{KPa}^{-1}]$, $\mu_f = 10^{-6}\,[\text{KPa}\cdot\text{s}]$.

\begin{figure}[!t]
\begin{center}
\includegraphics[height=0.365\textwidth]{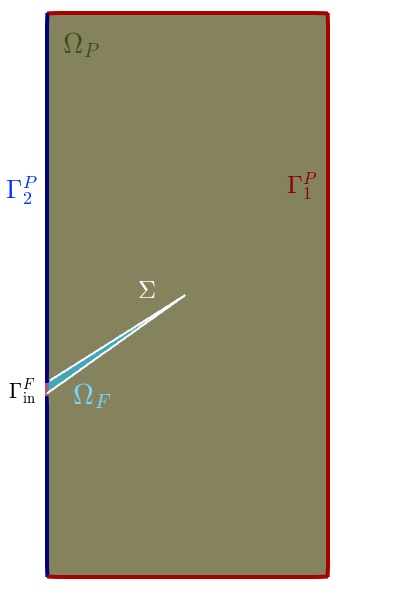}
\includegraphics[height=0.365\textwidth]{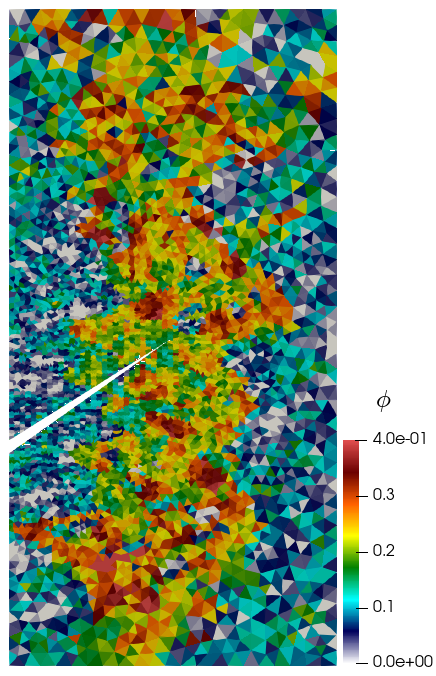} 
\includegraphics[height=0.365\textwidth]{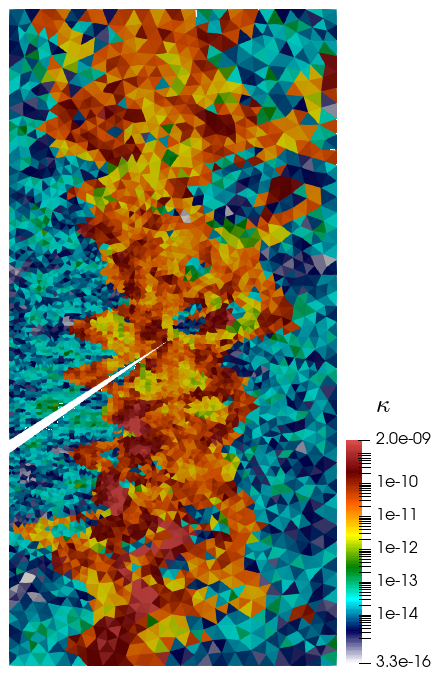} 
\includegraphics[height=0.365\textwidth]{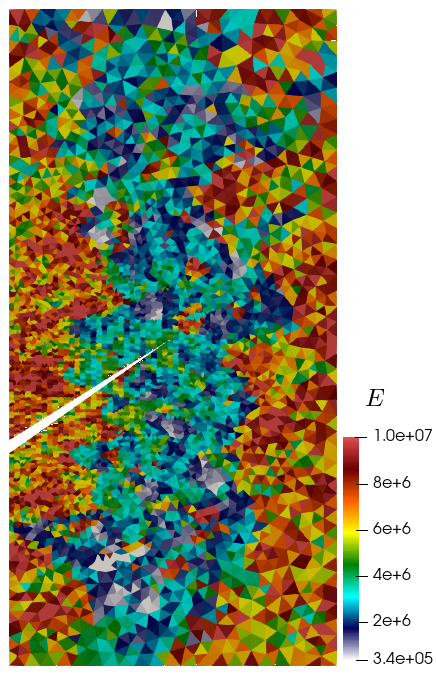}
\end{center}

\vspace{-3mm}
\caption{\cred{Example 3. Schematic representation of sub-domains, location of the free-porous interface, and configuration of sub-boundaries (left panel); and material properties (porosity $\phi(\bx)$, permeability $\kappa(\bx)$, and Young modulus $E(\bx)$) from layer 80 of the SPE10 benchmark dataset for reservoir simulations, herein projected onto a coarse mesh for the poroelastic sub-domain.}}\label{fig:exF}
\end{figure}

Similarly as in \cite{ambar18}, we set 
the flow initially at rest $\bd(0) = \cero$ and the initial Biot  pressures are $p_P(0) = \varphi(0) = 1000$\,[KPa] (since $\alpha = 1$). On the inlet boundary $\Gamma^F_{\mathrm{in}}$ (the vertical segment on the Stokes boundary) we impose the inflow velocity $\bu = (10,0)^{\tt t}\,$[m/s], on the bottom, right and top sub-boundaries $\Gamma^P_1$ of the Biot domain we prescribe sliding conditions $\bd\cdot\nn = 0$\,[m] together with a compatible normal-tangential stress condition and a fixed Biot pressure $p_P = 1000$\,[KPa], and on the right sub-boundaries $\Gamma^P_2$ of the Biot domain we set stress-free conditions. The time discretisation uses the fixed time step $\Delta t = 60$\,[s] and we run the simulation of injection over a period of $T = 10$\,[hours]. The unstructured triangular mesh has 1629 elements for the Stokes sub-mesh and 18897 elements for the Biot domain. For this test we have used the MINI element, consisting of continuous and piecewise linear elements with bubble enrichment for velocity and displacement, and continuous and piecewise linear elements for all remaining fields.  
The injected fluid imposes an increase of pressure on the interface and from there the expected channel-like progressive filtration from the Stokes to the Biot domain is clearly observed in the Biot pressure plot in the left panel of Figure~\ref{fig:exF-sols}, showing higher fluxes near the tip of the fracture. The remaining panels show, at the final time, snapshots of Stokes velocity, Biot displacement, and post-processed Biot filtration velocity. The deformation of the poroelastic structure  closer to the fracture is relatively small, which is why we do not consider in this test the harmonic extension of the fluid domain addressed in the previous example. 
}

\begin{figure}[!t]
\begin{center}
\includegraphics[height=0.46\textwidth]{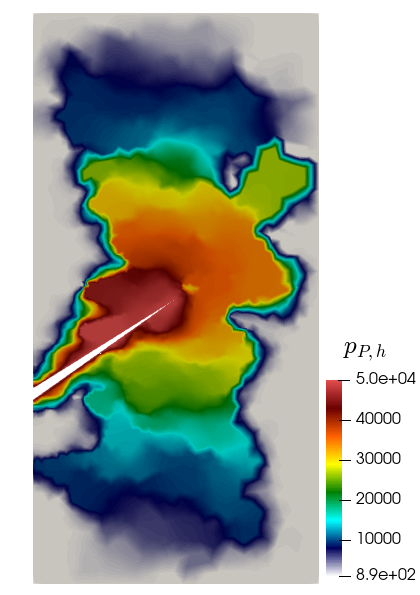}
\includegraphics[height=0.46\textwidth]{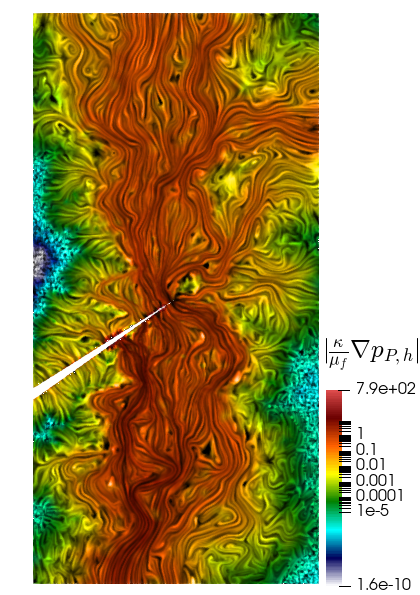}
\includegraphics[height=0.46\textwidth]{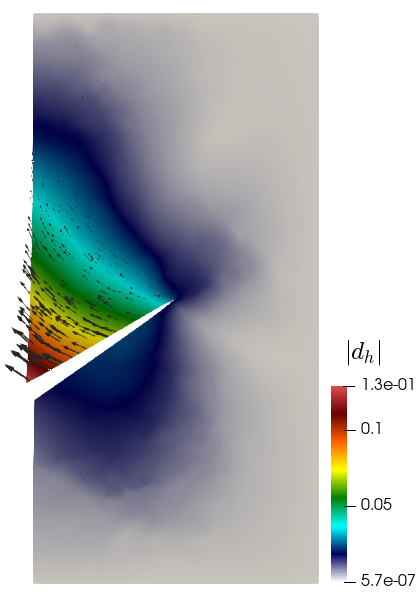}\\
\includegraphics[height=0.25\textwidth]{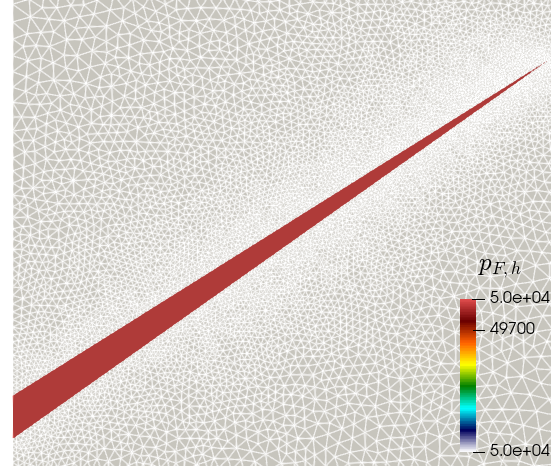}
\includegraphics[height=0.25\textwidth]{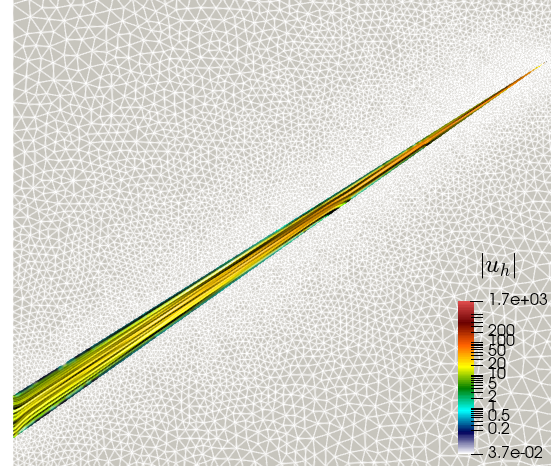}
\end{center}

\vspace{-3mm}
\caption{\cred{Example 3. Snapshots of the approximate solutions (Biot fluid pressure, post-processed Biot fluid velocity and line integral convolution, Biot displacement magnitude and displacement arrows on the deformed domain, Stokes fluid pressure, and Stokes velocity magnitude with line integral convolution) for fluid injection into a fracture porous medium using the SPE10-based benchmark test. Results were obtained after $t = 10$\,[hours].}}\label{fig:exF-sols}
\end{figure}
 
\subsection{Application to interfacial flow in the eye}

\begin{figure}[!tb]
\begin{center}
\includegraphics[width=0.4\textwidth]{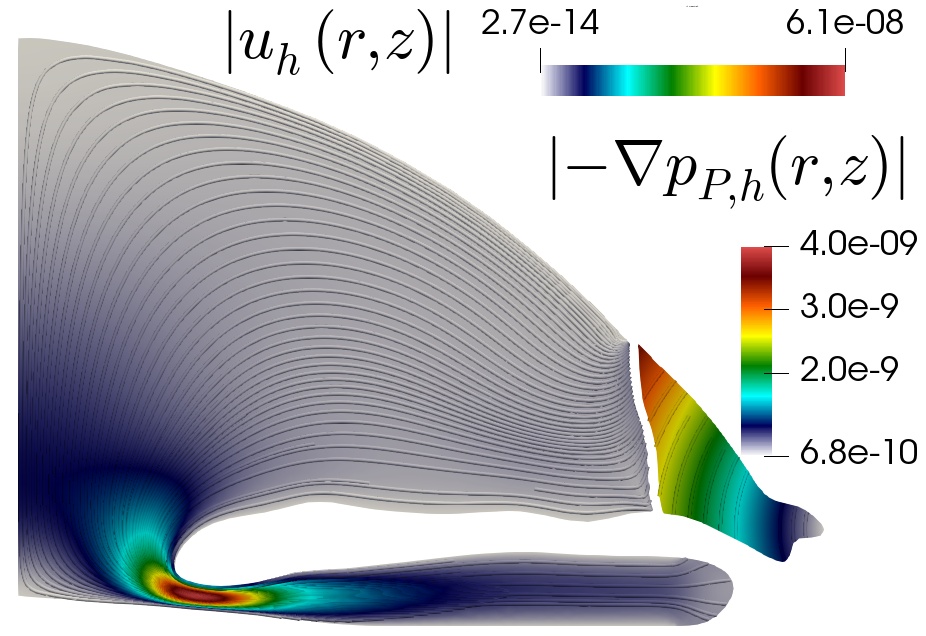}
\includegraphics[width=0.4\textwidth]{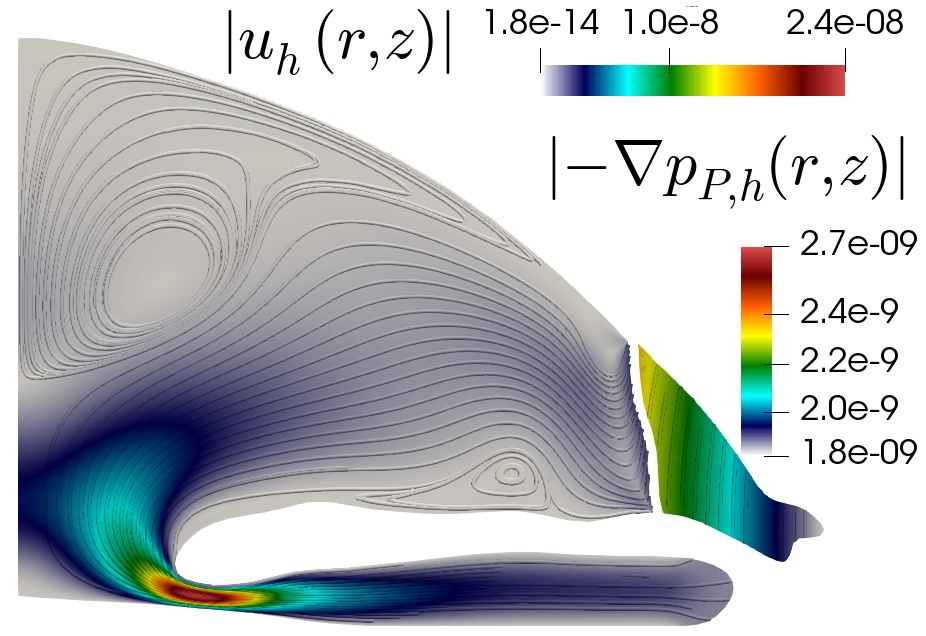}\\
\includegraphics[width=0.4\textwidth]{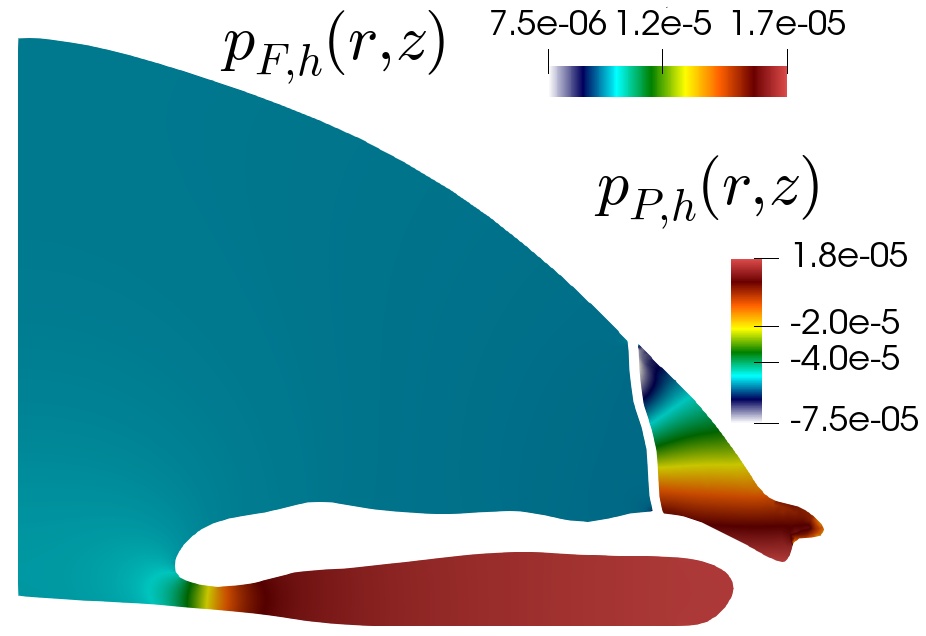}
\includegraphics[width=0.4\textwidth]{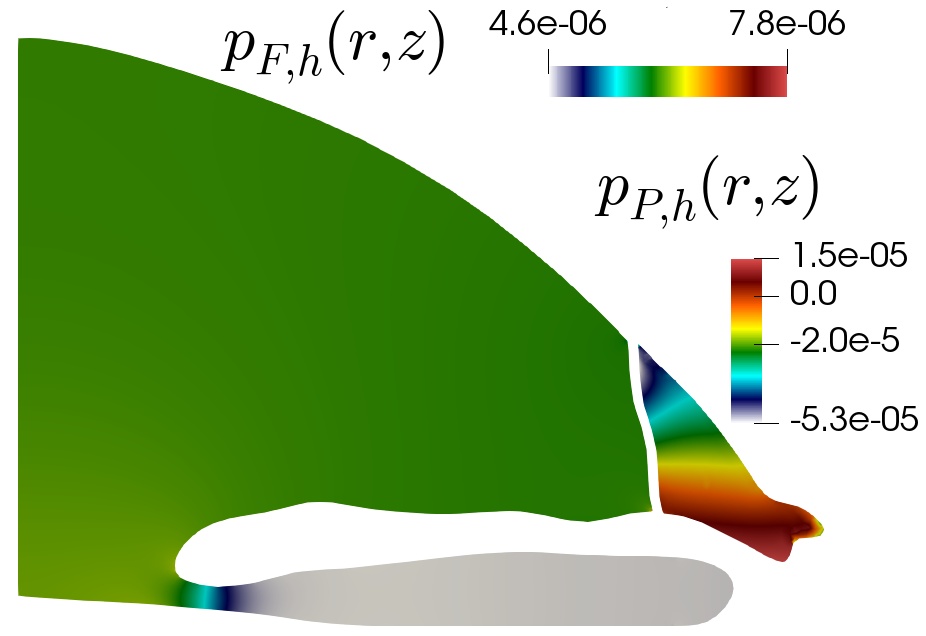}\\
\includegraphics[width=0.4\textwidth]{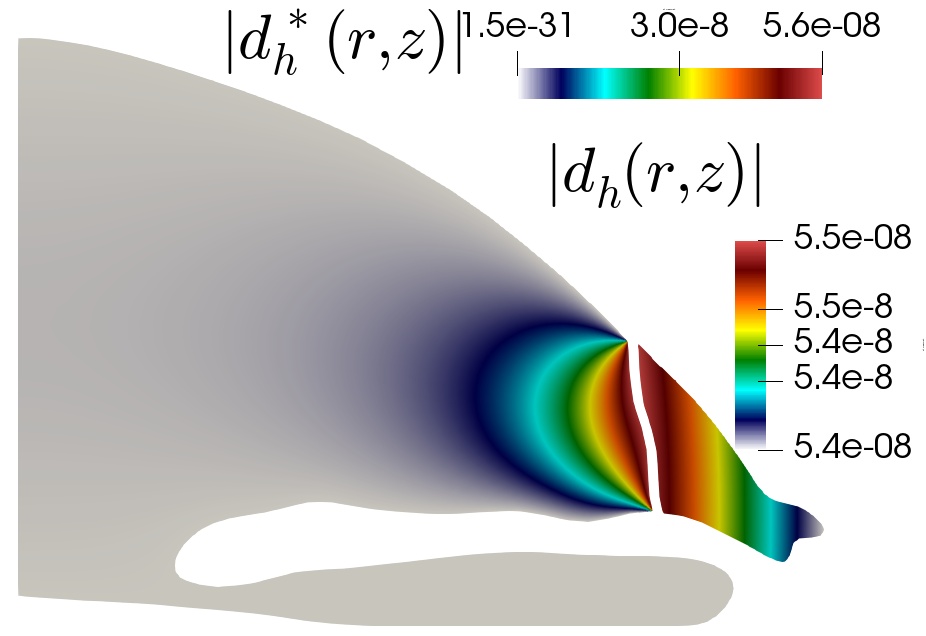}
\includegraphics[width=0.4\textwidth]{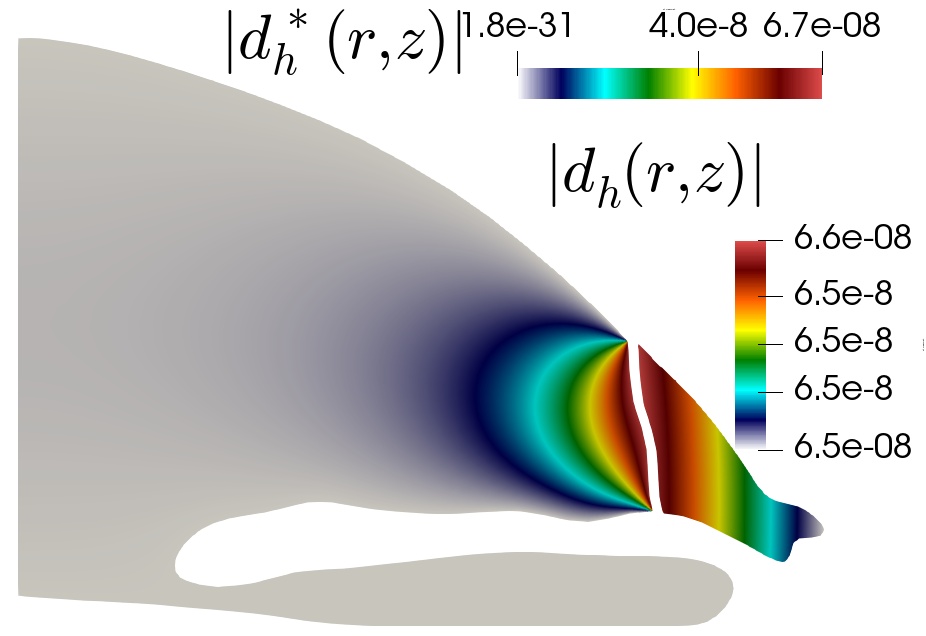}\\
\includegraphics[width=0.4\textwidth]{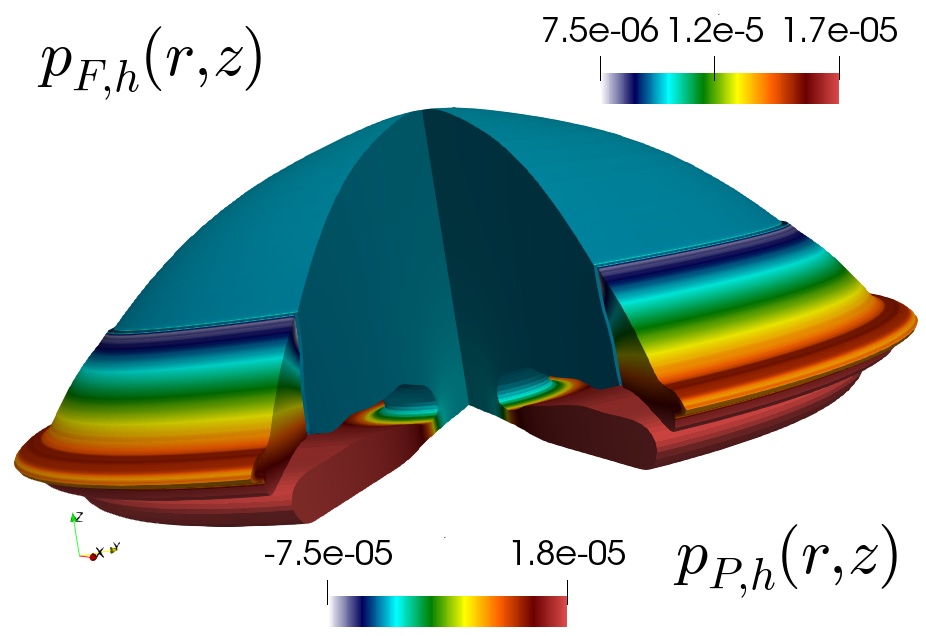}
\includegraphics[width=0.4\textwidth]{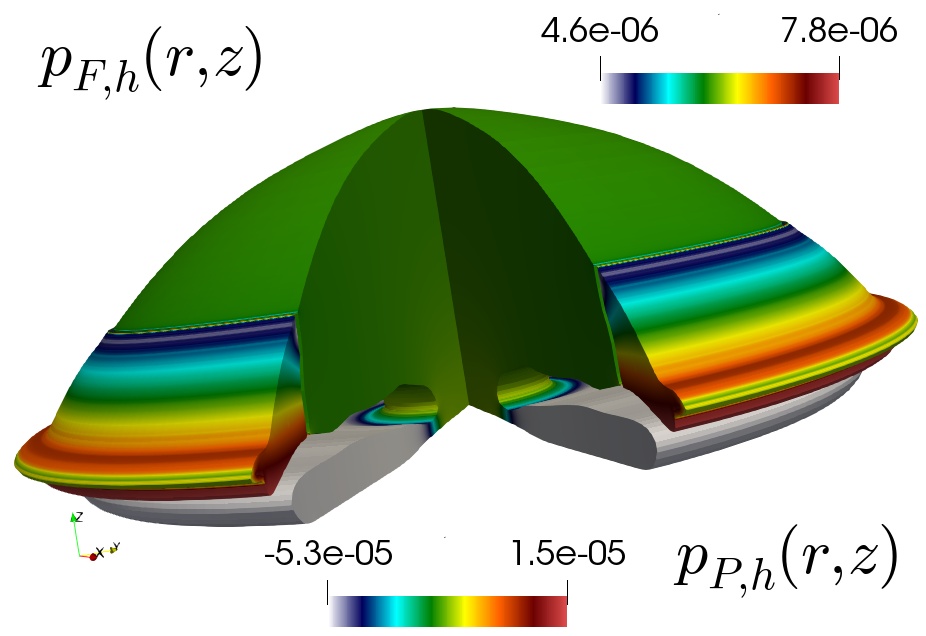}
\end{center}

\vspace{-3mm}
\caption{Example \cred{4}. Axisymmetric interfacial flow in the eye {for a linearly varying permeability profile}. Velocity magnitude and streamlines (top), fluid pressure (second row), displacement magnitude (third row), porous total pressure,  and fluid pressures on both domains extruded to the 3D case (bottom centre and bottom right). All solutions are shown at times $t = 4.2$\,[s] (left) and $t = 5$\,[s] (right column).}\label{fig:ex05}
\end{figure}

\cred{
To finalise this section we include a problem pertaining to the application of the interaction between aqueous humour in the anterior chamber and the trabecular meshwork.} Therein, one of the main driving questions is whether one can  observe deformation of the porous skeleton (and in particular of the interface) {that could drive} a rise in intra-ocular pressure. 
For this test we use the axisymmetric formulation \eqref{eq:axisym}, {we include the convective term in \eqref{eq:momentumA} and we discard gravity}. A large amount of data is available to {specialise the geometry and the mechanical properties (both fluidic and elastic) of the eye to different animal species}  \cite{cannizzo17,johnstone04,fitt06,martinez19}.  In our case (and 
consistent with a uniform temperature of $37^\circ$ and a characteristic length of $6.8\cdot10^{-3}\,[\text{m}]$) 
{we impose}  
\begin{gather*}
\rho_f = 998.7\,[\text{Kg}\cdot\text{m}^{-3}], \quad 
\mu_f = 7.5\cdot 10^{-4}\, [\text{Pa}\cdot \text{s}], \quad p_0 = 0\,[\text{Pa}],\quad 
E = 2700\,[\text{Pa}], \quad \nu = 0.47,  \\ 
\lambda 
= 14388\,[\text{Pa}], \quad \mu_s 
= 918.36\,[\text{Pa}], \quad  \alpha= 1, \quad 
\rho_s = 1102\,[\text{Kg}\cdot\text{m}^{-3}], \quad C_0=0, \quad 
\gamma = 0.1. 
\end{gather*} 
%
%

As discussed in \cite{crowder13}, an increase in the Beavers-Joseph-Saffmann friction parameter $\gamma$ leads to higher pressure differences between the trabecular meshwork and the anterior chamber. Note that even if {the choice of} $\alpha = 1$, $C_0=0$ indicate{s} that both constituents (fluid and solid) are assumed {intrinsically} incompressible, a Poisson ratio smaller than 0.5 implies compressibility of the poroelastic medium ({due to the possible rearrangement of the porosity field, i.e., the fluid escaping the medium}).

With reference to Figure \ref{fig:sketch}, the length of the interface $\Sigma$ between the trabecular meshwork and the anterior chamber is $5.7\cdot10^{-4}\,[\text{m}]$, and the length of the separation between the trabecular meshwork and angular aqueous plexus ($\Gamma^{\text{out}}$) is $3.4\cdot10^{-4}\,[\text{m}]$. 
A parabolic profile for inlet velocity with a pulsating magnitude $u_{\text{in}} = 4.89\cdot10^{-7}\sin^2(\pi t)\,[\text{m}\cdot\text{s}^{-1}]$ (that has approximately the same frequency as the heartbeat) is imposed on $\Gamma^{\mathrm{in}}$ (the magnitude is obtained from the ratio $\frac{C \mu_s \kappa}{L \mu_f}$ with $C =0.2$ integrated through the thickness and the condition is imposed through a Nitsche approach with penalty parameter equal to 1 \cred{since the relevant sub-boundary is not aligned with the axes}), no-slip conditions are prescribed essentially on the walls, and a slip condition is considered for the fluid velocity on the symmetry axis (also imposed essentially). On the outlet $\Gamma^{\text{out}}$ we impose zero fluid pressure $p_P = p_0$. We simulate the interfacial flow until $t = 5$\,[s] and use a time step of $\Delta t = 0.1$\,[s]. \cred{In this case, Taylor-Hood elements are used for the pairs [velocity, fluid pressure] and [displacement, total pressure], alongside continuous and piecewise quadratic elements for Biot fluid pressure}. Figure~\ref{fig:ex05} depicts the numerical solutions at two time instants, showing the distribution on the meridional axisymmetric domain of displacement, pressures and velocity (including the Darcy velocity in the trabecular meshwork) and illustrating the modification of flow patterns throughout the last cycle of the computation, {and using (for this first run of simulations), a constant permeability  $\kappa_0 = 5.0 \cdot 10^{-12}\,[\text{m}^2]$.  For sake of visualisation, we have also plotted the fluid pressure on both sub-domains rotationally extruded to a cut of the 3D domain.

\begin{figure}[!t]
\begin{center}
\includegraphics[height=0.25\textwidth]{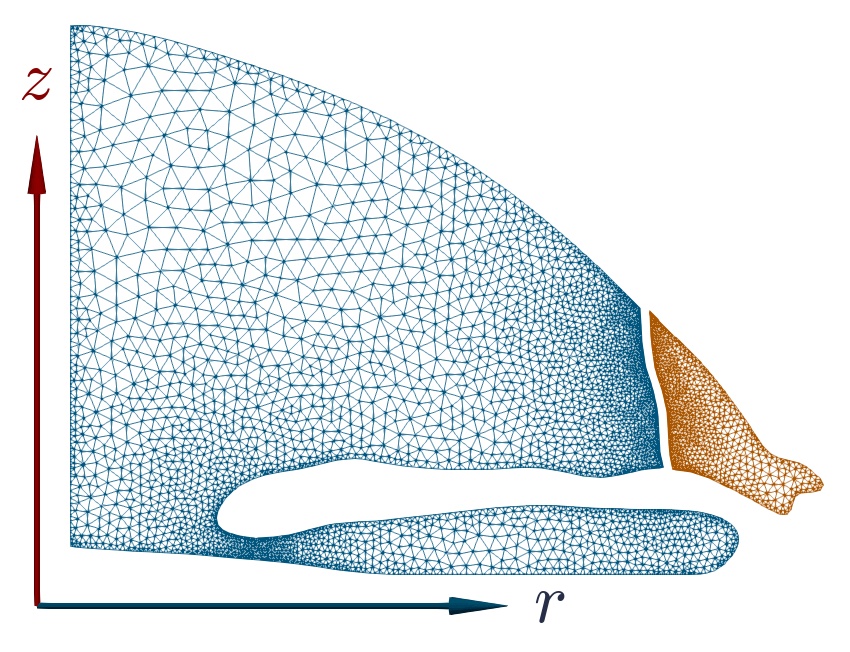} \\
\includegraphics[width=0.15\textwidth]{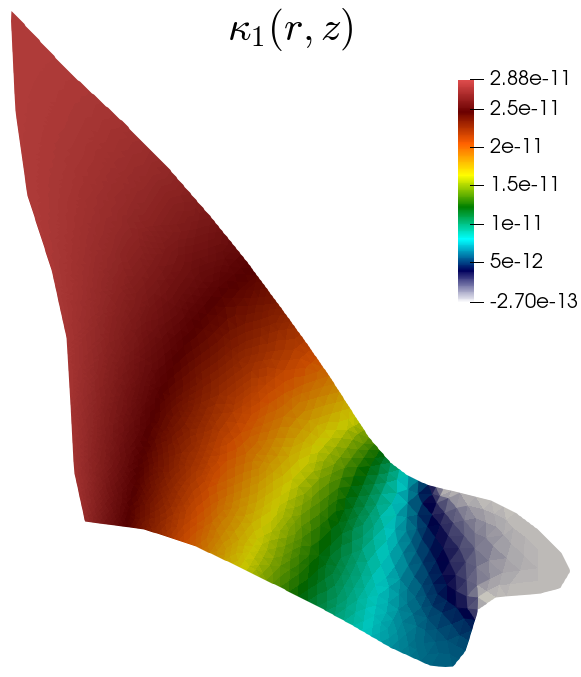}\qquad \qquad 
\includegraphics[width=0.15\textwidth]{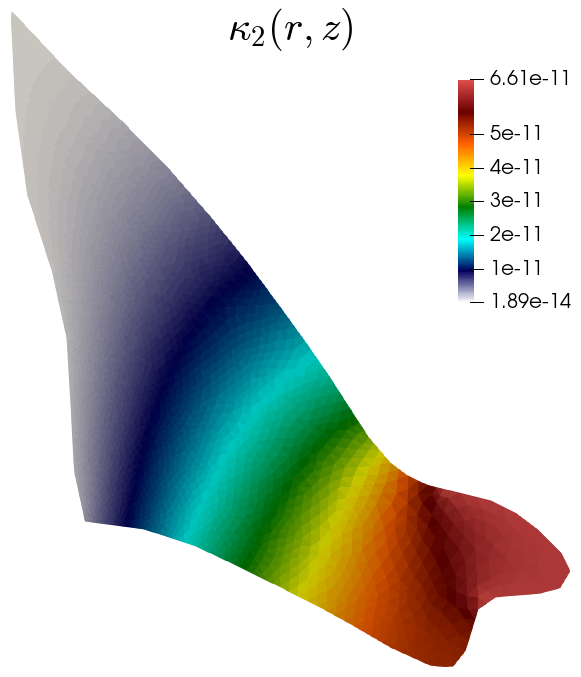}\qquad\qquad 
\includegraphics[width=0.15\textwidth]{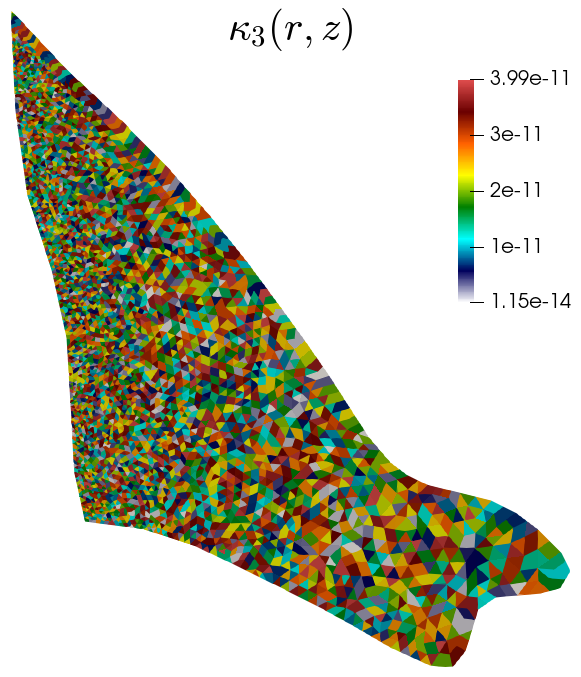}\qquad\qquad 
\includegraphics[width=0.15\textwidth]{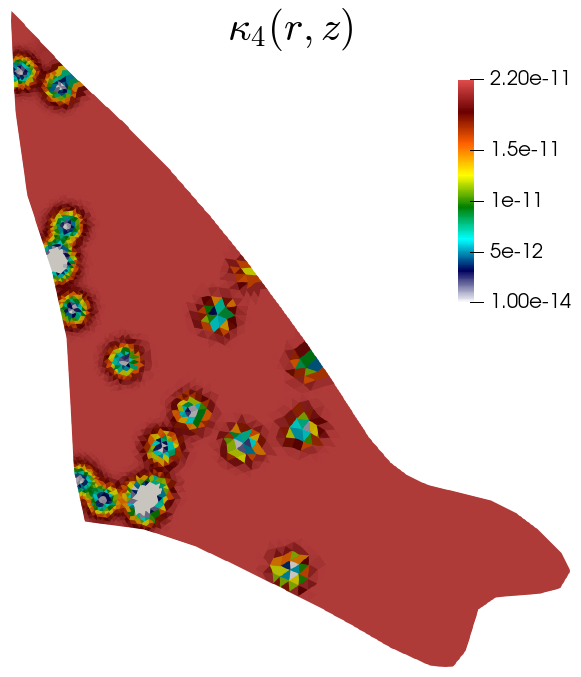}
\\
\includegraphics[width=0.24\textwidth]{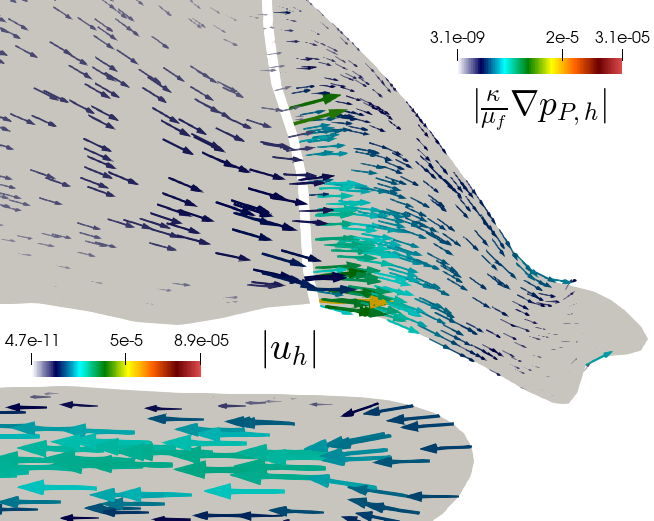}
\includegraphics[width=0.24\textwidth]{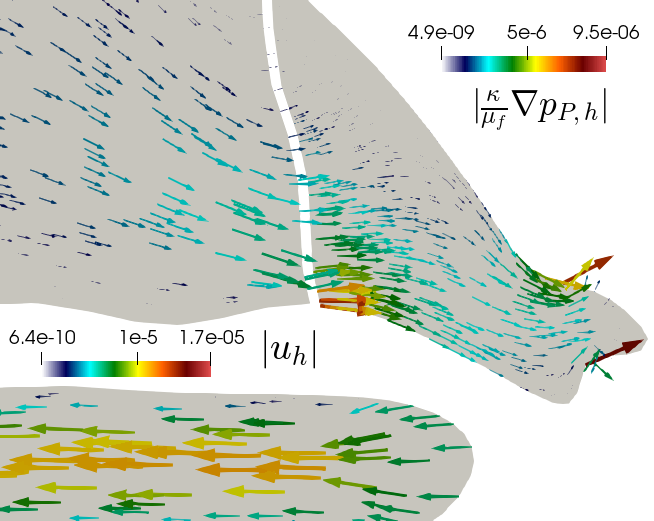}
\includegraphics[width=0.24\textwidth]{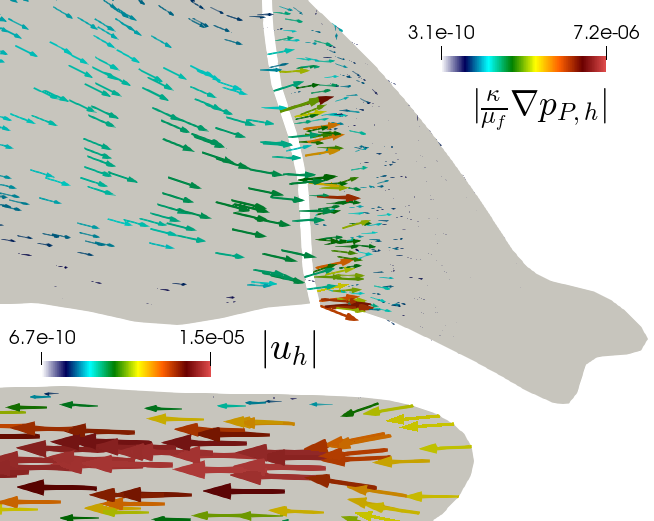}
\includegraphics[width=0.24\textwidth]{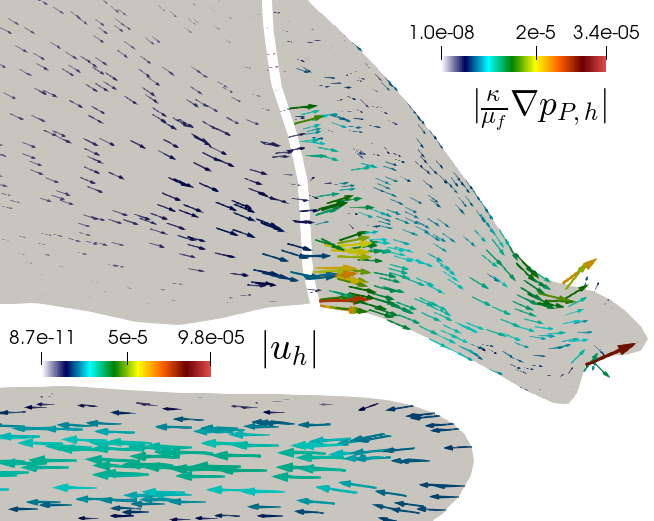}
\\[0.5ex]
\includegraphics[width=0.24\textwidth]{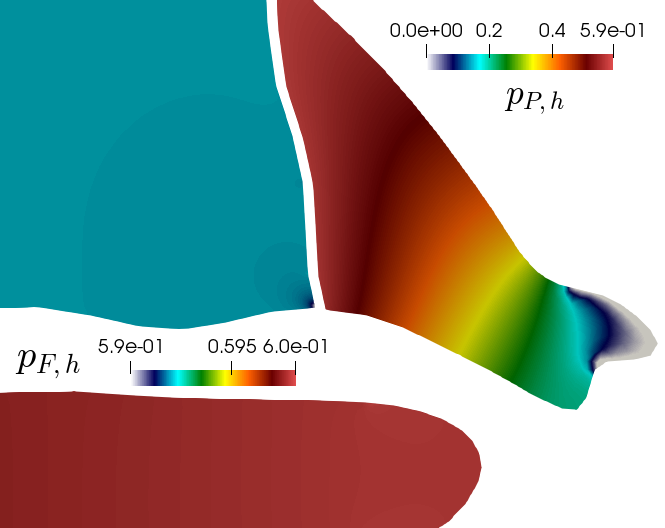}
\includegraphics[width=0.24\textwidth]{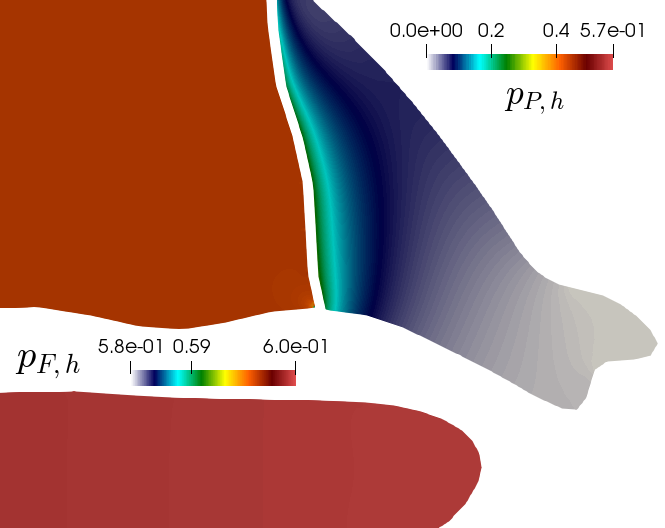}
\includegraphics[width=0.24\textwidth]{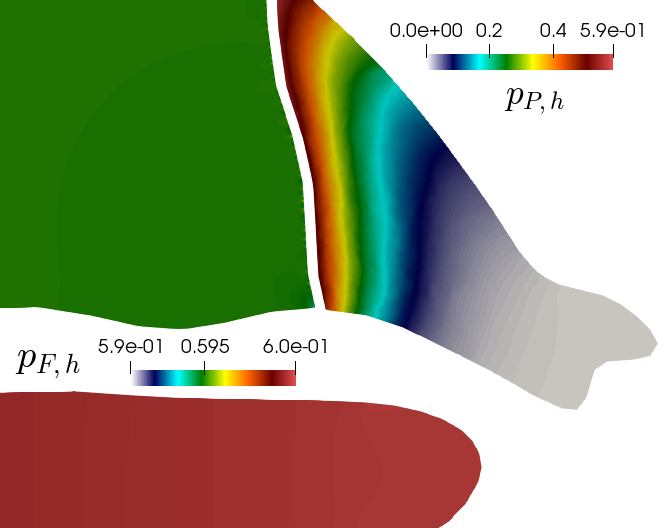}
\includegraphics[width=0.24\textwidth]{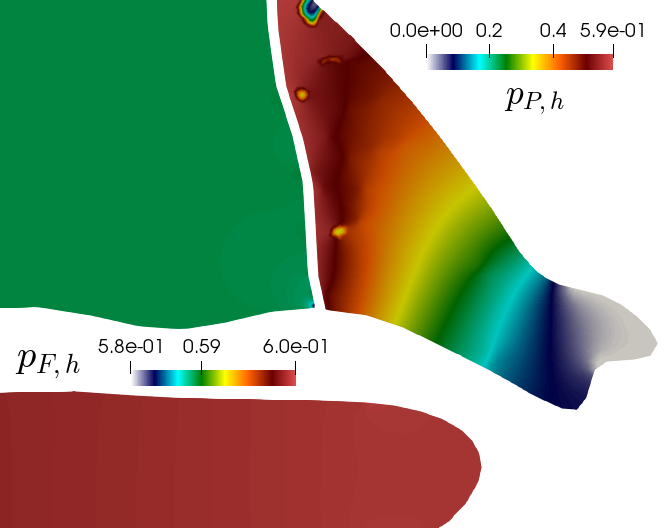}
\\[0.5ex]
\includegraphics[width=0.24\textwidth]{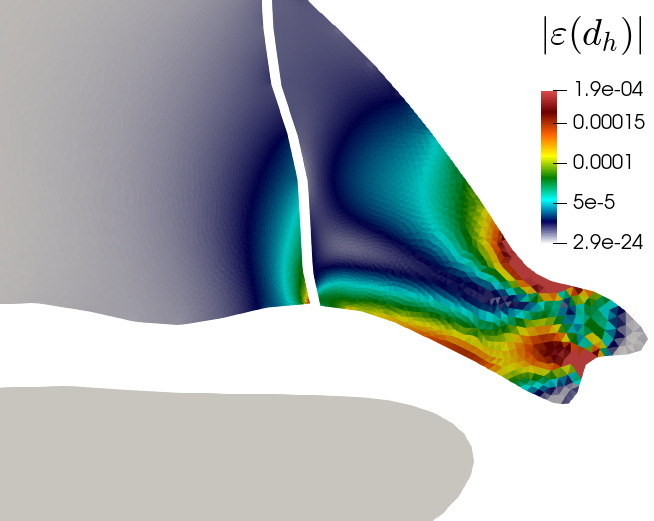}
\includegraphics[width=0.24\textwidth]{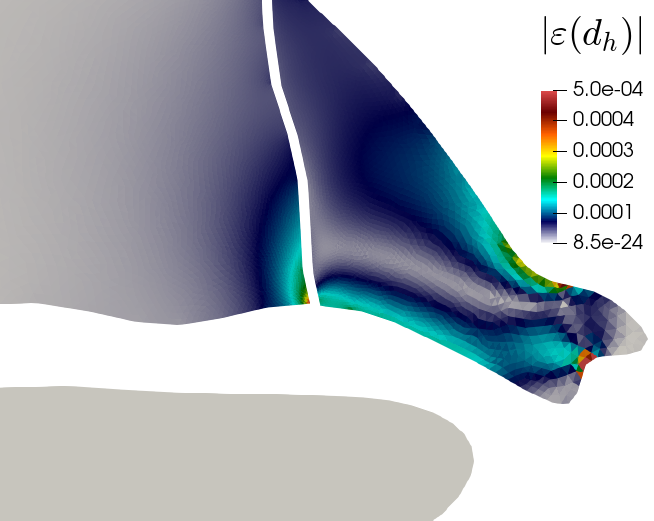}
\includegraphics[width=0.24\textwidth]{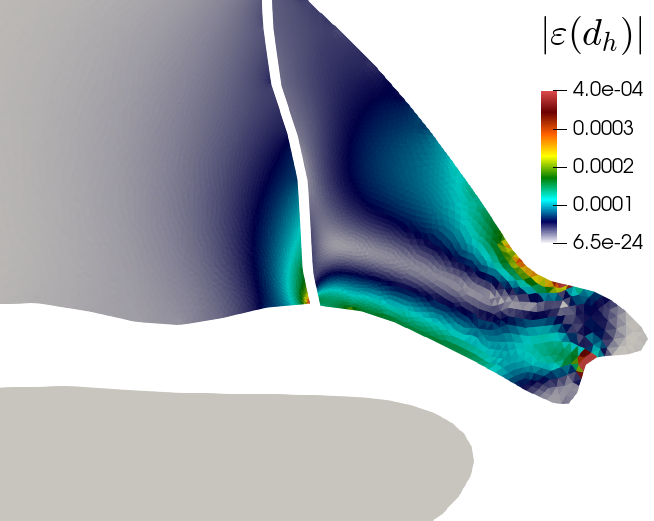}
\includegraphics[width=0.24\textwidth]{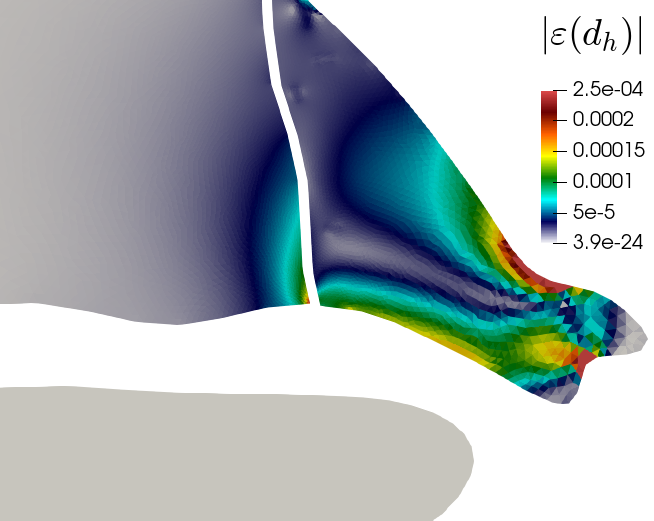}
\end{center}

\vspace{-3mm}
\caption{Example \cred{4}. Axisymmetric interfacial flow in the eye. 
\cred{Top: Coarse unstructured mesh indicating the axis of symmetry. Second row:  four permeability distributions  (in $[\text{m}^2]$) projected on piecewise constants over $\Omega_P$. Linearly decreasing (left), linearly increasing (centre-left), with a log-random distribution (centre-right), and with randomly distributed spots of smaller permeability (right). In all cases the average permeability is $\bar{\kappa}=2.0\cdot10^{-11}$.} Third-fifth rows:  
comparisons of resulting  profiles (velocity vectors, fluid pressure, and post-processed strain) at \cred{$t = 6$\,[s]}.} \label{fig:ex05-compar}
\end{figure}

In addition, we compare the behaviour produced by four heterogeneous permeability profiles 
\eqref{eq:generic-kappa}. The first case has a gradient going from \cred{$\kappa_1^{\max}=2.88\cdot 10^{-11}\,[\text{m}^2]$ on the interface, linearly down to $\kappa^{\min}=10^{-14}\,[\text{m}^2]$ on the outlet. A second synthetic permeability profile will decrease from the  value $\kappa_2^{\max}=6.55\cdot10^{-11}\,[\text{m}^2]$ on the interface down to  $\kappa^{\min}$ on the outlet. These two  profiles are generated by solving a Laplace problem with mixed boundary conditions, setting $\kappa_{i}^{\max}$ and $\kappa^{\min}$ essentially, and no-flux naturally on the remainder of $\Gamma_P$.  A third permeability distribution is 
generated  by a uniform random distribution bounded by $\kappa^{\min}$ and $\kappa_3^{\max}=3.99\cdot10^{-11}\,[\text{m}^2]$, and a fourth case is constructed by placing random points in $\Omega_P$ having permeability $\kappa^{\min}$, and $\kappa_4^{\max}=2.28\cdot10^{-11}\,[\text{m}^2]$ elsewhere (see the top panels of Figure~\ref{fig:ex05-compar}). The maximum values $\kappa_{i}^{\max}$  were tuned so that the average permeability $\bar{\kappa}_i= \frac{1}{|\Omega_P|}\int_{\Omega_P}\kappa_i(r,z) \mathrm{d}r\,\mathrm{d}z$ is equal to $2.0 \cdot 10^{-11}$ in all four cases.}

\cred{The effect of  spatial variations in  permeability are evaluated by imposing an inlet velocity profile (that is, equivalently, controlling the flow) with $\bu_{\mathrm{in}}\cdot \nn   = - 0.1 t\,[\text{m}\cdot\text{s}^{-1}]$  while prescribing zero fluid Biot pressure on $\Gamma^{\text{out}}$. From the third row of Figure~\ref{fig:ex05-compar} we see that the pressure difference (intra-ocular fluid pressure minus the pressure at the angular aqueous plexus) remains roughly the same in all the cases consistently with the fact that $\bar{\kappa}$ is kept constant. Nonetheless we see that different permeability profiles give rise to slightly different spatial distributions for velocity, for the Biot fluid pressure, and for the strain (post-processed from the Biot  displacement and from the harmonic extension Stokes displacement). Compared with the constant permeability case, that is the underlying field in the fourth permeability profile (right column), where the pressure decays linearly toward the outlet, in the decreasing permeability case (left column), the pressure gradient looks larger close to the inlet while the strain are, in general everywhere, lower; for the increasing permeability case (centre-left column) the pressure gradient is larger at the inlet and the strain are, in general everywhere, larger. Taking into consideration that here we investigate a much complex geometry, these speculations are consistent with the description provided in \cite{taffetani20} and obtained in the limit of a slender geometry. The description provided here allows to speculate that a rearrangement of the microstructure could not drive a macroscopic change in the mechanics of the system within a short time horizon, since the flow and the pressure drop remains of the same order of magnitude; however, higher pressure and strain gradients could be the cause of a local remodelling of the tissue that, in the long term and in the nonlinear regime, could lead to a collapse of part of the pectinate ligaments.}

\section{Concluding remarks}\label{sec:concl}
We have introduced a new formulation for the coupling of Biot's poroelasticity system using total pressure and free flow described by the Stokes and Navier-Stokes equations, and which does not require Lagrange multipliers to set up the interface conditions between the two subdomains. The well-posedness of the continuous problem has been proved, we have provided a rigorous stability analysis. A mixed finite element method is defined for the proposed formulation together with a corresponding reduction to the axisymmetric case and the proposed schemes are robust with respect to the first Lam\'e parameter. We have conducted the numerical validation of   spatio-temporal accuracy and \cred{have also performed some tests of applicative relevance, studying the behaviour of poromechanical filtration in subsurface hydraulic fracture with challenging heterogeneous material parameters, and simulating interfacial flow through the trabecular meshwork of the eye.} 

Different formulations we will study next include conservative discretisations on both poroelasticity and free flow \cite{boon20,hong21,kumar20}. 
{We are currently working towards the design of monolithic block preconditioners being robust with respect to all material parameters, and whose construction hinges on   adequate weighted Sobolev spaces and fractional interfacial norms.} On the other hand,  a crucial model extension corresponds to the regime of large deformations and the incorporation of   remodelling mechanisms that would better explain the progressive consolidation of the interface and the shrinkage of the trabecular meshwork and associated ciliary cleft collapse seen in canines with glaucoma (using formalisms sharing similarities with the study of lamina cribrosa thickening \cite{grytz12}). \cred{There, it is also necessary to consider nonlinear variations on permeability depending on porosity, which in turn undergoes changes due to microstructural rearrangement \cite{macminn2016}.}


\section*{Acknowledgments} This work has been partially supported by the Monash Mathematics Research Fund S05802-3951284; by the Ministry of Science and Higher Education of the Russian Federation within the framework of state support for the creation and development of World-Class Research Centers {``Digital biodesign and personalised healthcare'' No. 075-15-2020-926}; 
by Gruppo Nazio\-nale di Fisica Matematica (GNFM) of the Istituto Nazio\-nale di Alta Matematica (INdAM); by a grant from the American College of Veterinary Ophthalmologists Vision  for Animals Foundation VAFGL2017; and by NSF grant DMS 1818775.  {In addition, the authors gratefully acknowledge the many fruitful discussions with  Wietse Boon, 
Elfriede Friedmann, Miroslav Kuchta, Kent-Andr\'e Mardal,  
and Sarah L. Waters, regarding models and suitable discretisations for interfacial flow couplings.} 


\bibliographystyle{siam}
\bibliography{rtwy-refs}
\end{document}